\documentclass[11pt,letterpaper]{amsart}

\usepackage[margin=1.08in]{geometry}
\usepackage{amsmath,amssymb,amsthm,mathtools}
\usepackage{microtype}
\usepackage{xcolor}
\usepackage[T1]{fontenc}
\usepackage{libertinus}
\usepackage{enumitem}
\usepackage{booktabs,tabularx}
\usepackage{array}
\usepackage{hyperref}
\usepackage{aliascnt}
\usepackage[nameinlink,capitalize]{cleveref}

\hypersetup{
  colorlinks=true,
  linkcolor=black,
  citecolor=black,
  urlcolor=black,
  pdftitle={Hadamard rigidity and sharp stability in the completely bounded Bohnenblust--Hille inequality},
  pdfauthor={Daniel Nunez-Alarcon, Daniel M. Pellegrino and Eduardo V. Teixeira}
}
\setlist{itemsep=0.25em,topsep=0.45em}
\allowdisplaybreaks
\numberwithin{equation}{section}
\newtheorem{maintheorem}{Theorem}

\newaliascnt{proposition}{theorem}
\newtheorem{proposition}[proposition]{Proposition}
\aliascntresetthe{proposition}
\newaliascnt{lemma}{theorem}
\newtheorem{lemma}[lemma]{Lemma}
\aliascntresetthe{lemma}
\newaliascnt{corollary}{theorem}
\newtheorem{corollary}[corollary]{Corollary}
\aliascntresetthe{corollary}
\newaliascnt{conjecture}{theorem}

\aliascntresetthe{conjecture}
\theoremstyle{definition}
\newaliascnt{definition}{theorem}
\newtheorem{definition}[definition]{Definition}
\aliascntresetthe{definition}
\theoremstyle{remark}
\newaliascnt{remark}{theorem}
\newtheorem{remark}[remark]{Remark}
\aliascntresetthe{remark}

\crefname{maintheorem}{theorem}{theorems}
\Crefname{maintheorem}{Theorem}{Theorems}
\crefname{theorem}{theorem}{theorems}
\Crefname{theorem}{Theorem}{Theorems}
\crefname{proposition}{proposition}{propositions}
\Crefname{proposition}{Proposition}{Propositions}
\crefname{lemma}{lemma}{lemmas}
\Crefname{lemma}{Lemma}{Lemmas}
\crefname{corollary}{corollary}{corollaries}
\Crefname{corollary}{Corollary}{Corollaries}
\crefname{conjecture}{conjecture}{conjectures}
\Crefname{conjecture}{Conjecture}{Conjectures}
\crefname{definition}{definition}{definitions}
\Crefname{definition}{Definition}{Definitions}
\crefname{remark}{remark}{remarks}
\Crefname{remark}{Remark}{Remarks}

\DeclareMathOperator{\Tr}{Tr}

\title[Hadamard rigidity and sharp stability]{Hadamard Rigidity and Sharp Stability in the Completely Bounded Bohnenblust--Hille Inequality}

\author[D. N\'u\~nez-Alarc\'on]{Daniel N\'u\~nez-Alarc\'on}
\address{Department of Mathematics, Universidad Nacional de Colombia, Bogot\'a, Colombia}
\email{dnuneza@unal.edu.co}

\author[D. M. Pellegrino]{Daniel M. Pellegrino}
\address{Departamento de Matem\'atica, Universidade Federal da Para\'iba, Jo\~ao Pessoa, PB, Brazil}
\email{daniel.pellegrino@academico.ufpb.br}

\author[E. V. Teixeira]{Eduardo V. Teixeira}
\address{Department of Mathematics, Oklahoma State University, Stillwater, OK 74078, USA}
\email{eduardo.teixeira@okstate.edu}

\subjclass[2020]{Primary 46G25, 47L25; Secondary 05B20, 46B28}
\keywords{Bohnenblust--Hille inequality, completely bounded norm, Hadamard matrix, extremizer, rigidity, stability}

\begin{document}

\begin{abstract}
The completely bounded Bohnenblust--Hille inequality of Arunachalam, Dutt, Escudero Guti\'errez and Palazuelos controls coefficient summability with optimal constant one, independently of the ambient dimension. We classify all nonzero equality cases over both the real and complex fields: they are precisely scalar multiples of Hadamard chains supported on equal-sided Cartesian boxes. Thus equality determines both the support and the factorization of the coefficient tensor. For each fixed degree $d\ge2$, we also establish sharp structural stability. A form with completely bounded norm one and deficit $\varepsilon$ lies within $C_d\sqrt\varepsilon$, in the critical coefficient $\ell_{2d/(d+1)}$ norm, of both a flat unimodular path and a normalized unitary chain on the same box. The corresponding unimodular and scaled unitary edges satisfy a matching normalized Frobenius estimate. At zero deficit both approximants coincide with the original Hadamard chain. All constants are independent of the ambient dimension, and the exponent $1/2$ is optimal for both approximations over either field.
\end{abstract}
\maketitle

\section{Introduction}

The Bohnenblust--Hille inequality originates in the 1931 work of Bohnenblust and Hille on the absolute convergence of Dirichlet series \cite{BH}. In its multilinear form, for $\mathbb K\in\{\mathbb R,\mathbb C\}$, it asserts that for every integer $d\ge2$ there is a constant $B_d$, independent of the dimension $n$, such that
\[
 \left(
 \sum_{i_1,\ldots,i_d=1}^n
 |T(e_{i_1},\ldots,e_{i_d})|^{\frac{2d}{d+1}}
 \right)^{\frac{d+1}{2d}}
 \le B_d\|T\|
\]
for every $d$-linear form $T:(\mathbb K^n)^d\to\mathbb K$, where $\mathbb K^n$ is equipped with the supremum norm and
\[
 \|T\|:=\sup\left\{
 |T(x^{(1)},\ldots,x^{(d)})|:
 \|x^{(s)}\|_\infty\le1\ \text{ for }\ 1\le s\le d
 \right\}.
\]
The exponent $2d/(d+1)$ is optimal. The dimension-free character of this estimate is the feature that has made the Bohnenblust--Hille inequality a recurring tool in the study of multilinear forms, homogeneous polynomials, Dirichlet series, and coefficient summability.

Over the decades, the inequality has developed in several directions. Mixed-norm and multiple-summing formulations revealed its connections with the geometry of Banach spaces \cite{Blei,DefantPopa,DefantSevilla}, while its polynomial form became a central ingredient in the determination of the asymptotic order of the Bohr radius \cite{BayartEtAl,DefantEtAl}. More recently, Bohnenblust--Hille phenomena have appeared for prescribed supports \cite{Bayart}, Boolean cubes \cite{DefantMastylo}, cyclic groups \cite{SloteVolbergZhang}, and in noncommutative settings \cite{VolbergZhang}. Related operator-valued and noncommutative Bohr phenomena were developed by Popescu for free holomorphic functions on polyballs \cite{Popescu2019}. Despite the variety of these developments, the underlying question remains the same: how strongly can the size of a coefficient tensor be controlled by a norm of the object that generates it, uniformly in the ambient dimension?

A different and particularly rigid form of this question was introduced by Arunachalam, Dutt, Escudero Guti\'errez and Palazuelos \cite{ADGP}. They replace the classical norm on the right-hand side by the completely bounded norm, a stronger norm adapted to operator-valued tests, and prove
\[
 \|T\|_{\frac{2d}{d+1}}\le \|T\|_{\mathrm{cb}}
\]
with constant exactly one \cite[Theorem~3.1]{ADGP}. The constant is sharp and the exponent remains optimal \cite[Section~3.3 and Theorem~3.7]{ADGP}. Thus the constant-one phenomenon belongs to this completely bounded formulation, rather than to the classical inequality. Escudero Guti\'errez, Saucedo and Palazuelos \cite[Corollary~3.3]{ESP2026} have extended the constant-one inequality to real block-multilinear polynomials without a homogeneity assumption. Once the optimal constant is known exactly, a different problem becomes natural: what does equality force?

The examples underlying the sharpness of the critical exponent already have a special algebraic structure: Walsh--Hadamard matrices produce tensors for which equality occurs. We show that this structure is exhaustive.

Every nonzero extremizer, over both $\mathbb R$ and $\mathbb C$, is supported on an equal-sided Cartesian box and factors there through a chain of Hadamard matrices. Thus equality determines both the support and the coefficient structure. For $d=2$, the nonzero coefficient matrix is a scalar multiple of a Hadamard matrix; in higher degree, the corresponding object is a Hadamard chain.

There is also a dimension-free quantitative version. A normalized form with deficit $\varepsilon$ lies within $C_d\sqrt\varepsilon$, in the critical coefficient norm, of both a flat unimodular path and a normalized unitary chain on the same equal-sided Cartesian box. Their corresponding edges satisfy a matching normalized Frobenius estimate. The exponent $1/2$ is optimal.

The proof combines spectral tests for conditional slices with a quantitative form of Blei's inequality. These estimates concentrate the coefficient magnitudes on a balanced cube and make the phases compatible along a path. A bilinear nuclear-norm estimate then yields the unitary approximation without loss of the square-root rate.

Section~2 fixes the notation and states the three main results. Sections~3--6 contain the structural and matrix estimates; the remaining sections prove the classification, stability, and optimality statements.

\section{Framework and main results}\label{sec:framework-main}

Fix $d\ge2$, $n\ge1$, and $\mathbb K\in\{\mathbb R,\mathbb C\}$. For any positive integer $k$, write $[k]=\{1,\ldots,k\}$. Let $e_1,\ldots,e_n$ be the canonical basis of $\mathbb K^n$, and let
\[
 T:(\mathbb K^n)^d\longrightarrow\mathbb K
\]
be a $d$-linear form. For $(i_1,\ldots,i_d)\in[n]^d$, the scalar
{$T(e_{i_1},\ldots,e_{i_d})$ is the corresponding coefficient. We identify $T$ with this array when taking coefficient norms or restrictions, and call its $s$th argument the $s$th slot. Empty sums are zero and empty products are one.}

All Hilbert spaces below are over $\mathbb K$, and their inner products are linear in the second variable. For such a Hilbert space $\mathcal H$, let $\mathcal B(\mathcal H)$ denote its bounded linear operators, with norm
\[
 \|X\|_{\mathcal B(\mathcal H)}
 :=\sup_{\|\xi\|_{\mathcal H}\le1}\|X\xi\|_{\mathcal H}.
\]
For each slot $s\in[d]$ and index $j\in[n]$, choose a contraction
\[
 X_j^s:\mathcal H\longrightarrow\mathcal H,
 \qquad \|X_j^s\|_{\mathcal B(\mathcal H)}\le1.
\]
All operators in a test act on the same Hilbert space and need not commute. Products denote composition in the displayed order:
\begin{equation}\label{eq:operator-composition-intro}
 X_{i_1}^1X_{i_2}^2\cdots X_{i_d}^d
 :=X_{i_1}^1\circ X_{i_2}^2\circ\cdots\circ X_{i_d}^d
 \in\mathcal B(\mathcal H).
\end{equation}
\begin{definition}\label{def:cb-norm-intro}
The completely bounded norm of $T$ is
\begin{equation}\label{eq:cb-def-intro}
 \|T\|_{\mathrm{cb}}
 :=\sup_{\mathcal H}\;
 \sup_{\substack{X_j^s\in\mathcal B(\mathcal H)\\
                  \|X_j^s\|_{\mathcal B(\mathcal H)}\le1\
                  \ (s\in[d],\ j\in[n])}}
 \left\|
 \sum_{i_1,\ldots,i_d=1}^n
 T(e_{i_1},\ldots,e_{i_d})
 X_{i_1}^1\cdots X_{i_d}^d
 \right\|_{\mathcal B(\mathcal H)}.
\end{equation}
The first supremum ranges over all Hilbert spaces $\mathcal H$ over $\mathbb K$; for each $\mathcal H$, the second ranges over all families $(X_j^s)_{s\in[d],\,j\in[n]}$ satisfying $\|X_j^s\|_{\mathcal B(\mathcal H)}\le1$.
\end{definition}

For $d\ge2$ put
\[
 q=q_d:=\frac{2d}{d+1},
\]
and define the coefficient norm
\begin{equation}\label{eq:coefficient-q-intro}
 \|T\|_q
 :=\left(\sum_{i_1,\ldots,i_d=1}^n
 |T(e_{i_1},\ldots,e_{i_d})|^q\right)^{1/q}.
\end{equation}

The completely bounded Bohnenblust--Hille inequality \cite[Theorem~3.1]{ADGP} is
\begin{equation}\label{eq:cbBH-intro}
 \|T\|_q\le\|T\|_{\mathrm{cb}}.
\end{equation}
The constant one is optimal \cite[Section~3.3]{ADGP}, and the exponent $q_d$ is optimal \cite[Theorem~3.7]{ADGP}. A nonzero equality case in \eqref{eq:cbBH-intro} is an \emph{extremizer}. When $\|T\|_{\mathrm{cb}}=1$, we call $1-\|T\|_q$ the \emph{deficit}.

Constants $C_d>0$ and smallness thresholds $\varepsilon_d>0$ depend only on $d$; $C_d$ may increase and $\varepsilon_d$ may decrease between estimates. We write $O_d(t)$ for a scalar quantity bounded in absolute value by $C_d|t|$ in the stated range.

The equality cases are built from Hadamard matrices: square matrices $H$ of order $m$ over $\mathbb K$ satisfying
\[
 |H_{ab}|=1,\qquad HH^*=mI.
\]
Here $H^*$ is the transpose over $\mathbb R$ and the adjoint over $\mathbb C$; see \cite{TadejZyczkowski} for background. If the rows and columns are indexed by $m$-element sets $I$ and $J$, then $H(i,j)$ denotes the entry at $(i,j)\in I\times J$. For sets $J_1,\ldots,J_d$ of common cardinality $m$ and matrices $G_r$ indexed by $J_r\times J_{r+1}$, we form the product of scalar entries
\begin{equation}\label{eq:scalar-path-product-intro}
 \prod_{r=1}^{d-1}G_r(i_r,i_{r+1})
 :=G_1(i_1,i_2)G_2(i_2,i_3)\cdots G_{d-1}(i_{d-1},i_d).
\end{equation}
{}

{A matrix is \emph{unimodular} if all entries have modulus one; a square matrix $U$ is \emph{unitary} if $UU^*=I$. Over $\mathbb R$, the latter means orthogonal.} For any finite scalar array $A=(A_\alpha)$, including matrices and coefficient tensors, write
\[
 \|A\|_F:=\left(\sum_\alpha |A_\alpha|^2\right)^{1/2}
\]
{for its Frobenius norm.

On $J=J_1\times\cdots\times J_d$, a \emph{unimodular path} is a function $Z(i_1,\ldots,i_d)=\prod_{r=1}^{d-1}H_r(i_r,i_{r+1})$ with unimodular \emph{edges} $H_r:J_r\times J_{r+1}\to\mathbb K$. A tensor is \emph{flat on $J$} if it has constant coefficient modulus there and vanishes elsewhere. If every $J_s$ has cardinality $m$, we call $J$ a cube of side $m$ and normalize the flat path as $m^{-(d+1)/2}Z$, extended by zero. A unimodular path on a cube is a \emph{Hadamard chain} when each edge matrix $H_r$ is Hadamard.} A \emph{normalized unitary chain} has coefficients $m^{-1}\prod_{r=1}^{d-1}U_r(i_r,i_{r+1})$ on $J$, zero elsewhere, with unitary edges $U_r$.

\begin{maintheorem}\label{thm:classification}
Let $T:(\mathbb K^n)^d\to\mathbb K$ be a nonzero $d$-linear form. Equality holds in \eqref{eq:cbBH-intro} if and only if there are an integer $1\le m\le n$, sets $J_1,\ldots,J_d\subseteq[n]$ satisfying $|J_1|=\cdots=|J_d|=m$, a scalar $\lambda\ne0$, and Hadamard matrices $G_r$ indexed by $J_r\times J_{r+1}$ such that
\begin{equation}\label{eq:classification}
 T(e_{i_1},\ldots,e_{i_d})
 =\begin{cases}
 \displaystyle\lambda\prod_{r=1}^{d-1}G_r(i_r,i_{r+1}),
       &(i_1,\ldots,i_d)\in J_1\times\cdots\times J_d,\\[4pt]
 0,&\text{otherwise.}
 \end{cases}
\end{equation}
\end{maintheorem}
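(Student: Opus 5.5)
The two directions are handled separately. Sufficiency is a direct operator factorization. Necessity comes from tracking where equality can hold in the proof of \eqref{eq:cbBH-intro}, first for moduli and then for phases.

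\emph{Sufficiency.} Let $T$ be given by \eqref{eq:classification}. All $m^d$ nonzero coefficients have modulus $|\lambda|$, so
\[
\|T\|_q=|\lambda|\,m^{d/q}=|\lambda|\,m^{(d+1)/2}.
\]
Fix contractions $X^s_j$ and work on $\mathcal H^{m}=\bigoplus_{j\in J}\mathcal H$. Define:
\begin{itemize}
\item the block row $R=(X^1_{i})_{i\in J_1}:\mathcal H^m\to\mathcal H$;
\item the block diagonals $D_s=\operatorname{diag}(X^s_j)_{j\in J_s}$ for $2\le s\le d-1$;
\item the block column $C=(X^d_j)_{j\in J_d}:\mathcal H\to\mathcal H^m$.
\end{itemize}
The scalar entries $G_r(i_r,i_{r+1})$ commute with every operator. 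Hence the operator sum in \eqref{eq:cb-def-intro} equals
\[
\lambda\,R\,(G_1\otimes I)\,D_2\,(G_2\otimes I)\cdots D_{d-1}\,(G_{d-1}\otimes I)\,C .
\]
We have the bounds $\|R\|,\|C\|\le\sqrt m$, $\|D_s\|\le1$, and $\|G_r\otimes I\|=\|G_r\|=\sqrt m$, the last because $G_rG_r^*=mI$. Multiplying these gives $\|T\|_{\mathrm{cb}}\le|\lambda|m^{(d+1)/2}=\|T\|_q$. Combined with \eqref{eq:cbBH-intro}, this gives equality.

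\emph{Necessity: the moduli.} I would retrace the proof of \cite[Theorem~3.1]{ADGP} and force equality at every step. That proof has two ingredients:
\begin{itemize}
\item a family of mixed-norm bounds, in which $\|T\|_{\mathrm{cb}}$ dominates, for each slot $s$, the norm that is $\ell_2$ in slot $s$ and $\ell_1$ or $\ell_\infty$ type in the remaining slots;
\item Blei's inequality, which interpolates these mixed norms into the $\ell_q$ norm.
\end{itemize}
The mixed-norm bounds are obtained from spectral tests on conditional slices, namely row and column operators built from the slices of $T$. If $\|T\|_q=\|T\|_{\mathrm{cb}}$, then every intermediate inequality must be tight.
\begin{enumerate}
\item Equality in the H\"older steps inside Blei's inequality forces $|T|$ to be constant on its support.
\item It also forces the support to be a product set $J_1\times\cdots\times J_d$.
\item Equality in the final Blei step, which balances slot sizes through the exponents $(d+1)/2$, then forces $|J_1|=\cdots=|J_d|=m$.
\end{enumerate}
This produces $\lambda$, $m$ and the box $J$.

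\emph{Necessity: the phases.} Write $T=\lambda\,\Theta$ on the box $J$, with $|\Theta|=1$. It remains to show that $\Theta$ factors as a Hadamard chain; I expect this to be the main obstacle. The plan is to use the tight two-slot spectral tests. For consecutive slots $r,r+1$, consider the $J_r\times J_{r+1}$ matrix obtained by conditioning on the other indices. Equality in the bilinear case $d=2$ bound, namely $\|A\|_{q_2}\le\|A\|_{\mathrm{cb}}$ with $\|A\|_{\mathrm{cb}}\le\|A\|_{S_1}$-type estimates, forces each such unimodular matrix to have all singular values equal to $\sqrt m$. That is, each conditional slice is Hadamard.

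The step I expect to be hardest is showing that these conditional slices are compatible. Concretely, the phase tensor should satisfy
\[
\Theta(i)\,\overline{\Theta(i')}=\prod_r (\text{ratios depending only on }(i_r,i_{r+1}))
\]
for suitable pairs of indices $i,i'$, so that the slices glue into a path $\prod_r G_r(i_r,i_{r+1})$. I would first try induction on $d$. The extremal test operators $X^s_j$, which can be taken unitary by the sufficiency computation, should be shown to be rigid. This would force the slot-$1$ dependence to be a single Hadamard factor $G_1(i_1,i_2)$ times a degree-$(d-1)$ extremizer in $(i_2,\ldots,i_d)$, to which the induction hypothesis applies. If direct rigidity of the extremal operators fails, the fallback is to compare the nuclear norm of the slot-$\{1\}$ versus slots-$\{2,\ldots,d\}$ flattening with its equality case.
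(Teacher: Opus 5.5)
Your sufficiency argument is correct; it is the paper's Lemma~\ref{lem:unitary-chain} up to normalization. Your treatment of the moduli is also right in outline. The paper carries out the equality analysis of Blei's inequality through the $(d+1)$-fold H\"older inequality for $\Pi,\nu_1,\dots,\nu_d$ in Lemma~\ref{sharp-lem-cube}. Note that the ADGP mixed norm is $A_s=\sum_j\|M_{s,j}\|_F$, which is $\ell_1$ in slot $s$ and $\ell_2$ in the others, not the reverse.

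The genuine gap is the phase compatibility, which you leave open. The missing idea is a sharpened slice bound. If the bridge block of the ADGP test is normalized by $\|M_{s,j}\|_{\mathrm{op}}$ instead of $\|M_{s,j}\|_F$, one gets $A_s\le\Gamma_s=\sum_j\|M_{s,j}\|_F^2/\|M_{s,j}\|_{\mathrm{op}}\le\|T\|_{\mathrm{cb}}$ (Lemma~\ref{lem:spectral-input}). At equality, Blei forces $A_s=\Gamma_s=\|T\|_{\mathrm{cb}}$. Hence every slice $M_{s,j}$, with rows $(i_1,\dots,i_{s-1})$ and columns $(i_{s+1},\dots,i_d)$, satisfies $\|M_{s,j}\|_{\mathrm{op}}=\|M_{s,j}\|_F$ and so has rank one. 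For unimodular $\Theta$ this yields the minor identities $\Theta(p,j,y)\Theta(p',j,b)=\Theta(p,j,b)\Theta(p',j,y)$. Fixing an anchor $a$ and applying these at $s=2,\dots,d-1$ peels off one edge at a time, each edge being a ratio of values of $\Theta$ with the other coordinates frozen at $a$. This telescopes $\Theta$ into $\prod_rH_r(x_r,x_{r+1})$ (Lemma~\ref{lem:anchored-compatible-path}). Retracing ADGP verbatim only gives $A_s\le\|T\|_{\mathrm{cb}}$, whose tightness carries no phase information. Neither induction on $d$ nor the slot-$1$ flattening supplies this: orthogonality of the flattening's rows does not force $\Theta$ to depend on $i_1$ only through $i_2$.

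Your Hadamard step also fails as stated. Freezing the other slots via $X^s_j=\mathbf 1_{\{j=a_s\}}I$ only gives $\|B\|_{\mathrm{cb}}\le\|T\|_{\mathrm{cb}}/|\lambda|=m^{(d+1)/2}$ for a fiber $B$. A $d=2$ equality case would require $\|B\|_{\mathrm{cb}}=\|B\|_{4/3}=m^{3/2}$, so for $d\ge3$ and $m\ge2$ nothing follows. The paper first establishes the path. It then uses a test that averages over the other $d-2$ coordinates while transporting the phases of the other edges (Lemma~\ref{lem:robust-edge-extraction}), which gives $\beta(H_r)\le m^{3/2}$.

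Moreover, the estimate $\|A\|_{\mathrm{cb}}\le\|A\|_{S_1}$ that your chain needs is false. For the all-ones $m\times m$ matrix, Lemma~\ref{lem:spectral-input} gives cb norm at least $A_1=m^{3/2}>m=\|\cdot\|_{S_1}$ when $m\ge2$. The operative inequality is instead $\|B\|_{\ell_{4/3}}^2\le\beta(B)\|B\|_{S_1}$ (Lemma~\ref{lem:matrix-interpolation}). With $\|H_r\|_{\ell_{4/3}}^2=m^3$ it forces $\|H_r\|_{S_1}\ge m^{3/2}=\sqrt m\,\|H_r\|_F$. Hence all singular values of $H_r$ equal $\sqrt m$, and $H_rH_r^*=mI$.
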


{The theorem determines both the support and the factorization of every extremizer. Its Hadamard edges are real, with entries in $\{\pm1\}$, over $\mathbb R$, and complex over $\mathbb C$. For $d=2$, the active coefficient matrix is a scalar multiple of a Hadamard matrix.}

\begin{maintheorem}\label{thm:flat-stability}
For each fixed $d\ge2$ there are constants $C_d<\infty$ and $\varepsilon_d>0$ such that, whenever
\[
 \|T\|_{\mathrm{cb}}=1,\qquad \|T\|_q\ge1-\varepsilon,
 \qquad 0\le\varepsilon\le\varepsilon_d,
\]
there are an integer $1\le m\le n$, sets
$J_1,\ldots,J_d\subseteq[n]$ with $|J_1|=\cdots=|J_d|=m$, and
unimodular matrices $H_r$, indexed by $J_r\times J_{r+1}$ for
$r=1,\ldots,d-1$. Put $J:=J_1\times\cdots\times J_d$ and define
\[
 P(e_{i_1},\ldots,e_{i_d})
 =m^{-(d+1)/2}\prod_{r=1}^{d-1}H_r(i_r,i_{r+1})
 \quad ((i_1,\ldots,i_d)\in J),
\]
with $P(e_{i_1},\ldots,e_{i_d})=0$ outside $J$. Then
\[
 \|T-P\|_q\le C_d\sqrt\varepsilon,
 \qquad \|P\|_q=1,
\]
and
\begin{equation}\label{eq:main-tail}
 {\sum_{(i_1,\ldots,i_d)\in[n]^d\setminus J}}|T(e_{i_1},\ldots,e_{i_d})|^q\le C_d\varepsilon.
\end{equation}
\end{maintheorem}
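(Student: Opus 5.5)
The plan is to follow the proof of \eqref{eq:cbBH-intro} in \cite{ADGP} and make each inequality in its chain quantitative. I will then read off the support, the moduli and the phases of $T$ from the near-equality cases of the individual steps. Write $t_\alpha:=T(e_{i_1},\ldots,e_{i_d})$ for $\alpha=(i_1,\ldots,i_d)$.

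The argument of \cite{ADGP} has two layers. The first layer consists of spectral (operator-valued) tests built from conditional slices of $T$. They bound, for each slot $s$, the mixed norm
\[
 M_s:=\sum_{i_s}\Bigl(\sum_{\alpha\,:\,\alpha_s=i_s}|t_\alpha|^2\Bigr)^{1/2}\le\|T\|_{\mathrm{cb}}=1 .
\]
The second layer is Blei's inequality $\|T\|_q\le\prod_{s}M_s^{1/d}$, proved by a chain of H\"older steps. Under $\|T\|_q\ge1-\varepsilon$, every $M_s\ge1-O_d(\varepsilon)$, and every H\"older step in the Blei chain has deficit $O_d(\varepsilon)$.

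\textbf{Step 1: a quantitative Blei inequality.} A H\"older step $\sum fg\le\|f\|_a\|g\|_b$ with deficit $\delta$ forces $f^a$ and $g^b$, after normalization, to be within $O(\sqrt\delta)$ of each other in $\ell_1$ (a Pinsker or uniform-convexity type estimate). Applying this to each step, I expect the squared moduli $|t_\alpha|^2$ to be close to a product of one-slot marginals, and those marginals to be nearly constant on their supports. Concretely, I aim to find a cube $J=J_1\times\cdots\times J_d$ of side $m$ with two properties. First, $\sum_{\alpha\in J}\bigl||t_\alpha|-m^{-(d+1)/2}\bigr|^q\le C_d\varepsilon$. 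Second, \eqref{eq:main-tail} holds.

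The mass outside $J$ enters the deficit linearly rather than quadratically. This is because moving weight off the balanced cube costs first order in Blei's inequality, exactly as mass off the support costs first order in a strict H\"older inequality. So I will choose $J_s$ as the set of indices whose marginal is comparable to $m^{-1}$, with $m$ fixed by $M_s\approx1$, and count the discarded mass directly. The equal-sidedness of $J$ should come from comparing the different $M_s$, all of which are close to $1$.

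\textbf{Step 2: phases along a path.} Write $t_\alpha=|t_\alpha|\,\omega_\alpha$ on $J$. Once the moduli are essentially flat, I will go back to the spectral tests. For consecutive slots $(r,r+1)$ and fixed outer indices, a flat conditional slice is essentially an $m\times m$ unimodular matrix scaled by $m^{-(d+1)/2}$. The cb bound forces each such slice to have nearly maximal nuclear norm relative to its Frobenius norm, which is the near-Hadamard regime. Quantitatively, its singular values are within $O(\sqrt\varepsilon)$ of flat in $\ell_2$.

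The key consequence is a factorization statement: the phase $\omega_\alpha$ depends on $(i_r,i_{r+1})$ only through a factor that does not change with the remaining indices, up to an $\ell_2$ error $O(\sqrt\varepsilon)$. To exploit this, I fix a reference tuple (chosen by averaging, so that it is typical) and define the edge $H_r(i_r,i_{r+1})$ as a ratio of phases of $T$ along that reference. Telescoping the $d-1$ edges then shows $\omega_\alpha\approx\prod_r H_r(i_r,i_{r+1})$ in normalized $\ell_2$ on $J$.

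\textbf{Step 3: assembly.} Put $P=m^{-(d+1)/2}Z$ with $Z=\prod_r H_r$. Then $\|P\|_q=1$ holds exactly, since $P$ has $m^d$ entries of modulus $m^{-(d+1)/2}$ and $q(d+1)/2=d$. Split $T-P$ into three parts: the tail off $J$, which is $O(\varepsilon^{1/q})\le O(\sqrt\varepsilon)$ in $\ell_q$ because $q\le2$; the modulus error; and the phase error. On the flat cube, H\"older converts the normalized $\ell_2$ bounds into $\ell_q$ bounds of order $\sqrt\varepsilon$, with no dependence on $n$ or $m$.

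\textbf{Main obstacle.} I expect Step 2 to be the hardest. The difficulty is extracting a genuine path factorization of the phases, uniformly across all conditional slices, from the single scalar deficit $\varepsilon$. Averaging over reference indices must not lose more than a constant factor, and the errors must not compound beyond $C_d$ over the $d-1$ edges. My first attempt will be to exploit the nuclear-norm identity for near-Hadamard matrices slice by slice. I would combine it with a Cauchy--Schwarz average over the outer indices, so that the slice-wise estimates add up to a single global bound.
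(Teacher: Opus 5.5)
Your Steps~1 and~3 broadly match the paper's architecture, and so does the anchor-ratio-telescope device at the end of Step~2 (Lemmas~\ref{sharp-lem-cube} and~\ref{lem:anchored-compatible-path}). The gap is the input you feed into the telescoping. You derive it from near-Hadamardness of the consecutive two-slot slices (slots $r,r+1$, outer indices frozen), and both links of that derivation fail.

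\emph{The cb bound does not give this at this stage.} A cb test localized to one outer tuple only gives $\beta(S)\le\|T\|_{\mathrm{cb}}=1$ for the slice $S$. For $d\ge3$ this is vacuous, because every flat slice $S=\rho G$ with $|G|\equiv1$ already has $\beta(S)\le\rho m\|G\|_{\mathrm{op}}\le\rho m^2=m^{(3-d)/2}\le1$. The coherent test that sums over outer indices (Lemma~\ref{lem:robust-edge-extraction}) needs the phases of the other edges, i.e.\ the path you are building. The paper uses it only after the path exists, to get the unitary edges of Theorem~\ref{thm:unitary-stability}.

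\emph{Even exact Hadamardness of all these slices does not give your ``key consequence''.} Take $d=3$, $\omega=e^{2\pi i/m}$, and $F(i_1,i_2,i_3)=\omega^{i_1i_2+i_2i_3+i_1i_3}$ on $\{0,\ldots,m-1\}^3$; over $\mathbb R$, use $(-1)^{x\cdot y+y\cdot z+x\cdot z}$ on $(\mathbb F_2^k)^3$. Freezing $i_3$ or $i_1$ gives $D\mathcal F D$ with $\mathcal F=(\omega^{ab})$ and $D$ diagonal unitary, which is Hadamard. But freezing $i_2$ also gives a Hadamard matrix in $(i_1,i_3)$, so $\Delta_2(F)=\sqrt{1-1/m}$. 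Since $\Delta_2$ vanishes on paths and is $1$-Lipschitz in $L^2$, $\|F-Z\|_{L^2}\ge\sqrt{1-1/m}$ for every unimodular path $Z$. Consistently, $T=m^{-2}F$ has $\|T\|_q=1$ but $\|T\|_{\mathrm{cb}}\ge\Gamma_2(T)=\sqrt m$.

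What is missing is near-rank-one structure of the \emph{prefix--suffix} slices $M_{s,j}$ (rows $(i_1,\ldots,i_{s-1})$, columns $(i_{s+1},\ldots,i_d)$) at the middle slots $2\le s\le d-1$. This is exactly what your anchored construction consumes. Its error terms are the $2\times2$ minors $F(p,j,y)F(p',j,b)-F(p,j,b)F(p',j,y)$ of the slot-$(r+1)$ slice. The paper gets it in four steps.
\begin{itemize}
\item It normalizes the bridge of the ADGP test by $\|M_{s,j}\|_{\mathrm{op}}$ instead of $\|M_{s,j}\|_F$, which gives $\Gamma_s\le\|T\|_{\mathrm{cb}}$ (Lemma~\ref{lem:spectral-input}).
\item Blei's inequality together with $A_s\le1$ forces $A_s\ge(1-\varepsilon)^d$, so $\Gamma_s-A_s\le d\varepsilon$.
\item The bound $\operatorname{tail}_2(M)^2\le2h\bigl(h^2/\|M\|_{\mathrm{op}}-h\bigr)$, combined with $h_{s,j}\le C_d/m$ on $J_s$, gives $\Delta_s(F)^2\le C_d\varepsilon$.
\item Your anchoring then works after rounding $F$ to its phase (Lemma~\ref{lem:local-path-perturbation}).
\end{itemize}

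Your Step~1 must therefore also deliver the full-slice bound $h_{s,j}\le C_d/m$. Two of its statements also need correcting.
\begin{itemize}
\item The target $\sum_{\alpha\in J}\bigl||t_\alpha|-\rho\bigr|^q\le C_d\varepsilon$, with $\rho=m^{-(d+1)/2}$, is false. For $T_t$ in Proposition~\ref{sharp-prop-optimality}, where necessarily $J=[2]^d$, the left side is about $|t|^q\asymp\varepsilon^{q/2}$. The correct target is $m^{-d}\sum_{\alpha\in J}(|t_\alpha|/\rho-1)^2\le C_d\varepsilon$, which is all your Step~3 uses.
\item The linear tail \eqref{eq:main-tail} needs Hellinger-type control of the H\"older deficits, i.e.\ squared distance $O(\delta)$. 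Mere $O(\sqrt\delta)$ closeness in $\ell_1$ would only bound the tail by $O(\sqrt\varepsilon)$, which loses both \eqref{eq:main-tail} and the $\sqrt\varepsilon$ rate.
\end{itemize}
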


\begin{maintheorem}\label{thm:unitary-stability}
Under the hypotheses of Theorem~\ref{thm:flat-stability}, the sets
$J_1,\ldots,J_d$ and the unimodular matrices $H_r$ may be chosen so
that there are unitary matrices $U_r$ of order $m$,
$r=1,\ldots,d-1$, for which the tensor
\[
 Q(e_{i_1},\ldots,e_{i_d})
 =\frac1m\prod_{r=1}^{d-1}U_r(i_r,i_{r+1})
 \quad ((i_1,\ldots,i_d)\in J),
\]
extended by zero outside $J$, satisfies
\[
 \|T-Q\|_q\le C_d\sqrt\varepsilon,
 \qquad \|Q\|_{\mathrm{cb}}=1,
 \qquad \|Q\|_F=m^{-1/2},
\]
and
\begin{equation}\label{eq:main-sqrt}
 \max_{1\le r\le d-1}
 \frac{\|H_r-\sqrt m\,U_r\|_F}{m}
 \le C_d\sqrt\varepsilon.
\end{equation}
Over $\mathbb R$, every entry of $H_r$ belongs to $\{\pm1\}$ and the $U_r$ are orthogonal.
\end{maintheorem}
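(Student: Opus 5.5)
Starting from the output of Theorem~\ref{thm:flat-stability}, I keep the box $J=J_1\times\cdots\times J_d$ and may modify the unimodular edges $H_r$ as allowed. The goal is then to show that each normalized edge $m^{-1/2}H_r$ is $O_d(\sqrt\varepsilon)$-close, in normalized Frobenius norm, to a unitary matrix $U_r$. I would work with the bilinear slices of $T$ along consecutive slots $(r,r+1)$. Fix $r$. Pair the slots $\le r$ and the slots $>r+1$ against suitably chosen operator or unimodular test vectors built from the other edges $H_s$ with $s\ne r$. This gives the contraction of $T$ to a matrix $A_r$ indexed by $J_r\times J_{r+1}$. Because the cb-norm dominates tests on $\mathcal B(\mathcal H)$ with $\mathcal H=\mathbb K^m$, where slot $r$ receives row matrix units and slot $r+1$ receives column matrix units, $A_r$ has operator norm at most $\|T\|_{\mathrm{cb}}=1$ after normalization. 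On the other hand, the flat approximation $\|T-P\|_q\le C_d\sqrt\varepsilon$ gives $A_r\approx m^{-1/2}H_r$ up to scaling: the pairing with the path $P$ collapses the other edges exactly, because they are unimodular and the other coordinates are summed against conjugate phases.

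The key estimate is the bilinear nuclear-norm step. Let $B=m^{-1/2}H_r$. I want a matrix with $\|B\|_{\mathrm{op}}\le 1+O(\sqrt\varepsilon)$-type control and trace pairing $\operatorname{Re}\Tr(B^*\tilde H)\ge m(1-O(\varepsilon))$, where $\tilde H$ is unimodular with $\|\tilde H\|_F=m$. The pairing is then $\|B\|_1$-controlled: $\|B\|_F^2=m$ and $\|B\|_1\le m\,\|B\|_{\mathrm{op}}$ in this normalization. Take a polar decomposition $B=\sqrt m\,U_r|B|/\sqrt m$, with singular values $\sigma_k$. These satisfy $\sum\sigma_k^2=m^2$ for $H_r$ itself, $\sigma_k\le\sqrt m(1+\delta)$, and $\sum\sigma_k\ge m^{3/2}(1-\delta)$, where $\delta=O_d(\varepsilon)$. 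From this,
\[
\|H_r-\sqrt m\,U_r\|_F^2=\sum_k(\sigma_k-\sqrt m)^2=\sum\sigma_k^2-2\sqrt m\sum\sigma_k+m^2\le 4m^2\delta,
\]
which gives \eqref{eq:main-sqrt} with rate $\sqrt\delta=O(\sqrt\varepsilon)$. This is the square-root, not fourth-root, rate: the deficit enters linearly in the nuclear norm. The deficit must enter linearly rather than through $\|T-P\|_q\le C\sqrt\varepsilon$. To get that, I use the cb-test of $T$ itself with the operator assignment tuned to the $H_s$, together with $\|T\|_q\ge1-\varepsilon$ and the tail bound \eqref{eq:main-tail}, which is already linear in $\varepsilon$. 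This is the main obstacle. I would try to express $\|T\|_q^q$ restricted to $J$, via H\"older with the flat profile, as a cb-test value, reading off $\sum_k\sigma_k$ as a nuclear-norm lower bound.

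Given the $U_r$, define $Q$ as in the statement. The identities $\|Q\|_F^2=m^{-2}\cdot m^{d}\cdot m^{-(d-2)}\cdot\ldots$ reduce, by summing the path of unitaries one variable at a time (each row of a unitary has unit $\ell_2$ norm), to $\|Q\|_F^2=m^{-2}\cdot m=m^{-1}$. For $\|Q\|_{\mathrm{cb}}=1$, the upper bound follows by writing $\sum Q\,X^1\cdots X^d$ as a product of block row/column contractions interleaved with the unitaries $U_r$. The lower bound follows from the test with matrix units. To bound $\|T-Q\|_q$, I use $\|T-P\|_q\le C\sqrt\varepsilon$ and estimate $\|P-Q\|_q$ by telescoping the edges: replace $m^{-1/2}H_r$ by $U_r$ one at a time. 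Each step costs, by H\"older between $\ell_q$ and $\ell_2$ on the $m^d$ entries of $J$ together with the unitary/row-norm bounds on the remaining factors, at most $C\,\|H_r-\sqrt mU_r\|_F/m$. Finally, over $\mathbb R$ all steps stay real: the polar factor of a real matrix is orthogonal, and unimodular real means $\pm1$.
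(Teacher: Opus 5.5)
There is a genuine gap, and it sits exactly at the step you call the main obstacle. By the identity you write, the polar factor $U_r$ of $H_r$ satisfies $\|H_r-\sqrt m\,U_r\|_F^2=2m^2-2\sqrt m\,\|H_r\|_{S_1}$. Hence the lower bound $\sum_k\sigma_k(H_r)\ge m^{3/2}(1-O_d(\varepsilon))$ is \emph{equivalent} to \eqref{eq:main-sqrt}. The proposal reduces the theorem to itself and then only says you ``would try'' to realize $\|T\|_q^q$ as a cb-test value.

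The operator-norm input does not help, and your computation never uses it anyway. The matrix-unit test ($e_{a1}$ in slot $r$, $e_{1b}$ in slot $r+1$) bounds the contracted matrix only at coefficient scale. For $d=2$ it gives $\|m^{-3/2}H_1\|_{\mathrm{op}}\lesssim1$, a factor $m$ short of $\sigma_k(H_1)\le\sqrt m(1+\delta)$. No test closes this factor universally. For $d=2$ the cb norm of a coefficient matrix $M$ is exactly $\beta(M)$: pair $\sum M_{ij}X_iY_j$ with unit vectors $\xi,\eta$. A single unit entry on an $m\times m$ box has $\beta=1$ and operator norm $1$, not $1/m$.

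Finally, Theorem~\ref{thm:flat-stability} as a black box is too weak. Its $\ell_q$ bound at rate $\sqrt\varepsilon$ only gives $\mathbb E_J|F-Z|^2\lesssim\varepsilon^{q/2}$. You must reopen its proof to get the $L^2$ estimate $\|F-Z\|_{L^2(J)}\le C_d\sqrt\varepsilon$.

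The paper (Lemma~\ref{lem:sqrt-spectral-polar-closure}) never bounds the nuclear norm of $H_r$ directly. It works with the averaged edge $\mathsf A_r$ of \eqref{eq:edge-average-definition}, and proceeds in four steps.
\begin{enumerate}
\item Conditional Jensen gives $\mathbb E|\mathsf A_r|^2\le\mathbb E|F|^2\le1+C_d\varepsilon$ and $\|\mathsf A_r-H_r\|_{L^2}\le C_d\sqrt\varepsilon$.
\item Polarization gives $\operatorname{Re}\mathbb E(\mathsf A_r\overline{H_r})=\operatorname{Re}\mathbb E(F\overline Z)=\tfrac12(\mathbb E|F|^2+1-\mathbb E|F-Z|^2)\ge1-C_d\varepsilon$. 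This is where the deficit becomes linear: it enters as the \emph{square} of the $L^2$ error. H\"older against $|H_r|=1$ then yields $\|\mathsf A_r\|_{L^{4/3}}\ge1-C_d\varepsilon$.
\item The cb test of Lemma~\ref{lem:robust-edge-extraction} bounds the bilinear norm, $\beta(\mathsf A_r)\le m^{3/2}$, not an operator norm. The interpolation inequality $\|B\|_{\ell_{4/3}}^2\le\beta(B)\|B\|_{S_1}$ of Lemma~\ref{lem:matrix-interpolation}, proved through the semidefinite dual of $\beta$, turns these facts into $\|\mathsf A_r\|_{S_1}\ge m^{3/2}(1-C_d\varepsilon)$.
\item The polar factor is taken for $\mathsf A_r$, not for $H_r$. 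This gives $\|\mathsf A_r-\sqrt m\,U_r\|_F/m\le C_d\sqrt\varepsilon$, and the triangle inequality with $\|\mathsf A_r-H_r\|_F/m\le C_d\sqrt\varepsilon$ yields \eqref{eq:main-sqrt}. Transferring the nuclear-norm bound from $\mathsf A_r$ to $H_r$ first would only give $\varepsilon^{1/4}$.
\end{enumerate}

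Your remaining steps are sound and match the paper: the value of $\|Q\|_F$, the upper bound $\|Q\|_{\mathrm{cb}}\le1$ by block factorization, the telescoping bound on $\|P-Q\|_q$, and the real case. For the lower bound $\|Q\|_{\mathrm{cb}}\ge1$, use $A_1(Q)=1$ with Lemma~\ref{lem:spectral-input}, since pure matrix units again lose a factor $m$.
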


Theorems~\ref{thm:flat-stability} and~\ref{thm:unitary-stability} retain, respectively, the constant coefficient modulus and the unitary edges of the exact models. At zero deficit both approximants equal $T$; for positive deficit neither is required to be an exact extremizer.

\section{Preliminaries and spectral estimates}\label{sec:spectral-input}

{
For a nonempty finite set $E$ and $1\le p<\infty$, the counting and probability norms are
\begin{equation}\label{eq:counting-probability-norms}
 \|f\|_{\ell_p(E)}=\left(\sum_{x\in E}|f(x)|^p\right)^{1/p},
 \qquad
 \|f\|_{L^p(E)}=\left(\frac1{|E|}\sum_{x\in E}|f(x)|^p\right)^{1/p}.
\end{equation}
Both norms at $p=\infty$ are $\max_{x\in E}|f(x)|$.
We write $\mathbb E_E f=|E|^{-1}\sum_{x\in E}f(x)$ for the uniform average.
The symbols $L^p$ and $\mathbb E$ refer to uniform product measure unless another probability law is specified.

For $J=J_1\times\cdots\times J_d\subseteq[n]^d$, define
\[
 (T|_J)(i_1,\ldots,i_d):=T(e_{i_1},\ldots,e_{i_d})
 \qquad ((i_1,\ldots,i_d)\in J),
\]
and write $T_J:=T|_J$. Its extension by zero is the array
\[
 (T\mathbf1_J)(i_1,\ldots,i_d)
 :=\begin{cases}
 T(e_{i_1},\ldots,e_{i_d}),&(i_1,\ldots,i_d)\in J,\\
 0,&(i_1,\ldots,i_d)\notin J.
 \end{cases}
\]
The support is $\operatorname{supp}T=\{i\in[n]^d:T(e_{i_1},\ldots,e_{i_d})\ne0\}$.}

Fix $s\in[d]$ and $j\in[n]$. The \emph{$(s,j)$-slice} of $T$ is the
coefficient array obtained by fixing $i_s=j$. Its matrix $M_{s,j}$ has
row index $p=(i_1,\ldots,i_{s-1})\in[n]^{s-1}$ and column index
$u=(i_{s+1},\ldots,i_d)\in[n]^{d-s}$, with entries
\begin{equation}\label{eq:slice-matrix-definition}
 M_{s,j}(p,u)
 =T(e_{i_1},\ldots,e_{i_{s-1}},e_j,e_{i_{s+1}},\ldots,e_{i_d}).
\end{equation}
At the endpoints, $[n]^0=\{\varnothing\}$: $M_{1,j}$ is a row matrix and
$M_{d,j}$ is a column matrix. If $j\in J_s$, then $M^J_{s,j}$ denotes
the restriction to rows in $\prod_{t=1}^{s-1}J_t$ and columns in
$\prod_{t=s+1}^dJ_t$.

Matrix norms use counting measure unless otherwise specified. For $M=(M(a,b))_{a\in A,b\in B}$, the operator norm is the norm of
$f\mapsto(\sum_{b\in B}M(a,b)f(b))_{a\in A}$ from $\ell_2(B)$ to $\ell_2(A)$.
Its squared Frobenius norm is $\|M\|_F^2=\sum_{a\in A}\sum_{b\in B}|M(a,b)|^2$.
For a matrix or array $M$ and a nonempty set $\mathcal R$ in the same normed
space, $\operatorname{dist}(M,\mathcal R)=\inf_{R\in\mathcal R}\|M-R\|$;
the subscript specifies the norm.

Let $\sigma_1(M)\ge\sigma_2(M)\ge\cdots\ge0$ be the singular values of $M$,
extended by zeros. Thus $\|M\|_{\mathrm{op}}=\sigma_1(M)$ and
$\|M\|_F^2=\sum_{k\ge1}\sigma_k(M)^2$. The distance from $M$ to the
matrices of rank at most one is
\begin{equation}\label{eq:rank-one-tail-definition}
 \operatorname{tail}_2(M)^2
 :=\operatorname{dist}_F(M,\{R:\operatorname{rank}R\le1\})^2
 =\sum_{k\ge2}\sigma_k(M)^2.
\end{equation}
The trace, denoted by $\Tr$, is the sum of the diagonal entries. The trace
(or nuclear) norm is
\begin{equation}\label{eq:trace-norm-definition}
 \|M\|_{S_1}:=\Tr\bigl((M^*M)^{1/2}\bigr)=\sum_{k\ge1}\sigma_k(M).
\end{equation}

For nonempty finite sets $X_1,\ldots,X_d$ and a function
$F:X_1\times\cdots\times X_d\to\mathbb K$, fix $s\in[d]$ and $j\in X_s$.
For
\[
 p=(x_1,\ldots,x_{s-1})\in\prod_{t<s}X_t,
 \qquad
 u=(x_{s+1},\ldots,x_d)\in\prod_{t>s}X_t,
\]
with the empty Cartesian products interpreted as $\{\varnothing\}$, define the conditional matrix
\begin{equation}\label{eq:conditional-matrix-F}
 M_{s,j}(F)(p,u)
 :=F(x_1,\ldots,x_{s-1},j,x_{s+1},\ldots,x_d).
\end{equation}
The notation $\mathsf M_{s,j}(F)$ denotes this kernel interpreted as
an operator between the corresponding probability $L^2$ spaces. More generally, a kernel $M$ on $A\times B$ acts by
\begin{equation}\label{eq:probability-kernel-convention}
 (Mf)(a)=\frac1{|B|}\sum_{b\in B}M(a,b)f(b),
 \qquad
 \|M\|_{\mathrm{HS},\mathrm{prob}}
 =(|A||B|)^{-1/2}\|M\|_F.
\end{equation}
Here $\mathrm{HS}$ denotes the Hilbert--Schmidt norm: the square root of the sum of the squared singular values. We omit $\mathrm{prob}$ when the measure is clear. Thus a conditional slice on a box of side $m$ satisfies
\[
 \|\mathsf M_{s,j}(F)\|_{\mathrm{HS},\mathrm{prob}}
 =m^{-(d-1)/2}\|M_{s,j}(F)\|_F.
\]

For the full slices of $T$, put
\begin{equation}\label{eq:slice-quantities}
 h_{s,j}=\|M_{s,j}\|_F,\qquad A_s=\sum_{j=1}^n h_{s,j},\qquad
 \Gamma_s=\sum_{\substack{1\le j\le n\\ h_{s,j}>0}}
 \frac{h_{s,j}^2}{\|M_{s,j}\|_{\mathrm{op}}}.
\end{equation}
We write $A_s(T)$ and $\Gamma_s(T)$ when the form must be specified. Quotients corresponding to zero slices are defined as zero.

The following form of Blei's mixed-norm inequality is the case $J=d$, $K=1$ of \cite[Lemma~5.3]{Blei}.

{\begin{lemma}[{Blei's inequality}]
\label{lem:blei}
Let $a=(a_{i_1,\ldots,i_d})$ be a nonnegative array on $[n]^d$, let
\[
q=\frac{2d}{d+1},
\qquad
R_s(j)=\left(\sum_{\substack{i\in[n]^d\\ i_s=j}}a_i^2\right)^{1/2},
\qquad
A_s=\sum_{j=1}^nR_s(j).
\]
Then
\begin{equation}
\label{eq:blei}
\|a\|_{\ell_q([n]^d)}
\le
\left(\prod_{s=1}^d A_s\right)^{1/d}
\le
\frac1d\sum_{s=1}^dA_s.
\end{equation}
\end{lemma}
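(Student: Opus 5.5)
The second inequality in \eqref{eq:blei} is the arithmetic--geometric mean inequality applied to $A_1,\ldots,A_d$, so everything rests on the first. I would prove it as an interpolation between $d$ mixed norms on $[n]^d$, all nested in one fixed order: $i_d$ innermost, then $i_{d-1}$, and so on up to $i_1$ outermost. For a vector $\vec p=(p_1,\ldots,p_d)\in[1,\infty)^d$, let $\|a\|_{\vec p}$ be the norm obtained by taking the $\ell_{p_d}$ norm in $i_d$, then the $\ell_{p_{d-1}}$ norm in $i_{d-1}$, and so on up to $\ell_{p_1}$ in $i_1$. For each $s\in[d]$, let $\vec p^{(s)}$ have $p_s=1$ and $p_t=2$ for $t\ne s$.

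\emph{Step 1 (reduction to $A_s$ by Minkowski).} I claim $\|a\|_{\vec p^{(s)}}\le A_s$. The quantity $A_s=\sum_j R_s(j)$ is the mixed norm with $\ell_1$ in $i_s$ taken outermost and a single $\ell_2$ norm in all the other indices inside. In $\|a\|_{\vec p^{(s)}}$, the $\ell_1$ norm in $i_s$ instead sits inside the $\ell_2$ norms in $i_1,\ldots,i_{s-1}$. Minkowski's integral inequality, $\|\sum_j f_j\|_{\ell_2}\le\sum_j\|f_j\|_{\ell_2}$, lets one move an $\ell_1$ sum outside an $\ell_2$ norm while only increasing the quantity. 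Applying it to pull the $\ell_1$ sum in $i_s$ past $\ell_2$ in $i_{s-1}$, then past $\ell_2$ in $i_{s-2}$, and so on, gives $\|a\|_{\vec p^{(s)}}\le A_s$. The consecutive $\ell_2$ norms then merge into the single $\ell_2$ norm defining $R_s(j)$.

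\emph{Step 2 (mixed-norm Hölder).} Take weights $\theta_s=1/d$. In each coordinate the reciprocal exponents average to the target:
\[
\frac1d\Bigl(1+\frac{d-1}2\Bigr)=\frac{d+1}{2d}=\frac1q .
\]
The mixed-norm Hölder (log-convexity) inequality of Benedek--Panzone type then gives
\[
\|a\|_{\ell_q}=\|a\|_{(q,\ldots,q)}\le\prod_{s=1}^d\|a\|_{\vec p^{(s)}}^{1/d}.
\]
I would prove this by induction on the nesting depth. Write $a_i=\prod_s a_i^{1/d}$ and apply the ordinary $d$-fold Hölder inequality in the innermost variable $i_d$. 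The exponents there are $p_d^{(s)}/\theta_s$, whose reciprocals sum to $1/q$. This bounds the inner $\ell_q$ norm by a product of powers of the inner norms of each factor. Repeat the same step in $i_{d-1}$, then $i_{d-2}$, up to $i_1$, with the same exponent bookkeeping at every stage. For $d=2$ this is the familiar two-step argument: Hölder with exponents $(3,3/2)$ in $j$, then in $i$, then Minkowski. Combining with Step 1 gives $\|a\|_{\ell_q}\le(\prod_sA_s)^{1/d}$.

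The main obstacle is Step 2. The $d$ mixed norms must share a common nesting order for the iterated Hölder argument to close. That is exactly why Step 1 is arranged to compare against the fixed order rather than against the natural order of $A_s$, and why its inequality has to go in the favourable direction, with $\ell_1$ moving outward. Since the statement is quoted as the case $J=d$, $K=1$ of \cite[Lemma~5.3]{Blei}, one could also simply invoke that lemma; the plan above is the self-contained route.
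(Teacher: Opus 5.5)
Your proof is correct. Step~1 is the triangle inequality in $\ell_2(i_1,\ldots,i_{s-1})$ applied to $f_j=\|a(\cdot,j,\cdot)\|_{\ell_2(i_{s+1},\ldots,i_d)}$, whose $\ell_2$ norms are exactly $R_s(j)$. Step~2 is the iterated generalized Hölder inequality with exponents $dp_t^{(s)}$ in coordinate $t$, whose reciprocals sum to $1/q$. The identity $\|a^{1/d}\|_{d\vec p}=\|a\|_{\vec p}^{1/d}$ closes the induction.

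The paper does not prove the first inequality at this point. It quotes Blei's Lemma~5.3 and uses AM--GM for the second, as you do. So your argument is a genuinely different, self-contained route.

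The paper does contain its own derivation, in Step~1 of the proof of Lemma~\ref{sharp-lem-cube}, and it differs from yours. Put $B=(\prod_sA_s)^{1/d}$, $p_s(j)=R_s(j)/A_s$, $\Pi(i)=\prod_sp_s(i_s)$ and $\nu_s(i)=a_i^2/(R_s(i_s)A_s)$. Then $\Pi,\nu_1,\ldots,\nu_d$ are all probability distributions and $(\Pi\prod_s\nu_s)^{1/(d+1)}=a_i^q/B^q$. A single $(d+1)$-fold Hölder inequality therefore gives $\sum_ia_i^q\le B^q$, with no Minkowski step and no nesting-order bookkeeping.

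The two routes buy different things. In the paper's route, equality forces $\Pi=\nu_1=\cdots=\nu_d$, and the deficit is controlled by pairwise Hellinger distances via AM--GM on square roots; this is exactly the input for the stability theorems. Your route is the classical Benedek--Panzone one and adapts readily to other mixed-norm exponent configurations. However, extracting near-equality information from it would mean tracking losses through $d$ successive Hölder steps and a Minkowski step, which is much less transparent.
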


\begin{proof}
The first inequality is \cite[Lemma~5.3, with $J=d$ and $K=1$]{Blei}; the second follows from the arithmetic--geometric mean inequality.
\end{proof}}

Over $\mathbb R$, the estimate for $\Gamma_s$ below follows from the stronger trace-norm slice bound in \cite[Remark~3.2]{ESP2026}. The direct operator construction used here applies over both fields.

\begin{lemma}\label{lem:spectral-input}
For every slot $s$, one has $A_s\le {\Gamma_s}\le\|T\|_{\mathrm{cb}}$.
\end{lemma}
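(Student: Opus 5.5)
The first inequality $A_s\le\Gamma_s$ is elementary, and the second comes from a single operator test on a truncated Fock space, built so that one matrix coefficient of the operator sum in \eqref{eq:cb-def-intro} equals $\Gamma_s$ exactly.

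\emph{First inequality.} For every nonzero slice, $\|M_{s,j}\|_{\mathrm{op}}=\sigma_1(M_{s,j})\le\|M_{s,j}\|_F=h_{s,j}$. Hence
\[
 \frac{h_{s,j}^2}{\|M_{s,j}\|_{\mathrm{op}}}\ge h_{s,j}.
\]
Zero slices contribute $0$ to both $A_s$ and $\Gamma_s$. Summing over $j$ gives $A_s\le\Gamma_s$.

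\emph{Second inequality: the Hilbert space.} Fix $s$. Let
\[
 \mathcal H=\bigoplus_{k=0}^{d}\ell_2([n]^k),
\]
where $\ell_2([n]^0)=\mathbb K\,\Omega$ and $\Omega$ is the vacuum vector. Write $e_w$ for the basis vectors of $\ell_2([n]^k)$, indexed by words $w=(w_1,\ldots,w_k)$.

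\emph{Second inequality: the operators.} For slots $t>s$, take creation operators $X^t_i e_w=e_{(i,w)}$ on levels $k<d$, and $X^t_i=0$ on level $d$. For fixed $i$ this is an isometry, or zero, on each level, with the levels mapped to orthogonal levels, so $\|X^t_i\|\le1$.

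For slots $t<s$, take annihilation operators. Set $X^t_i e_{(w_1,\ldots,w_k)}=e_{(w_1,\ldots,w_{k-1})}$ if $w_k=i$, and $0$ otherwise; set $X^t_i\Omega=0$. This is the adjoint of a contraction of the same type (appending on the right), so it is a contraction.

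For slot $s$, define $X^s_j$ to vanish except on level $d-s$. There it maps $\ell_2([n]^{d-s})\to\ell_2([n]^{s-1})$ by
\[
 e_u\mapsto\sum_p c_j(p,u)\,e_p,
 \qquad
 c_j=\overline{M_{s,j}}/\|M_{s,j}\|_{\mathrm{op}}
\]
(entrywise conjugate, which is trivial over $\mathbb R$), and $c_j=0$ when the slice vanishes. Its norm is $\|c_j\|_{\mathrm{op}}=\|M_{s,j}\|_{\mathrm{op}}/\|M_{s,j}\|_{\mathrm{op}}\le1$. The endpoint cases $s=1$ and $s=d$ are included, since then one side is the level-$0$ space $\mathbb K\Omega$.

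\emph{Evaluation.} Apply the product $X^1_{i_1}\cdots X^d_{i_d}$ to $\Omega$; by \eqref{eq:operator-composition-intro}, $X^d$ acts first.
\begin{itemize}
\item The creators $X^d_{i_d},\ldots,X^{s+1}_{i_{s+1}}$ produce $e_u$ with $u=(i_{s+1},\ldots,i_d)$, a vector on level $d-s$.
\item $X^s_{i_s}$ then gives $\sum_p c_{i_s}(p,u)e_p$ on level $s-1$.
\item The annihilators $X^{s-1}_{i_{s-1}},\ldots,X^1_{i_1}$ strip the letters from the right. The result is $\Omega$ exactly when the remaining word is $p=(i_1,\ldots,i_{s-1})$, and $0$ otherwise.
\end{itemize}
Therefore
\[
 \Bigl\langle\Omega,\sum_i T(e_{i_1},\ldots,e_{i_d})X^1_{i_1}\cdots X^d_{i_d}\Omega\Bigr\rangle
 =\sum_j\sum_{p,u}M_{s,j}(p,u)\,c_j(p,u)
 =\sum_{j:\,h_{s,j}>0}\frac{h_{s,j}^2}{\|M_{s,j}\|_{\mathrm{op}}}
 =\Gamma_s .
\]
Since $\Omega$ is a unit vector, this coefficient bounds the operator norm, which gives $\Gamma_s\le\|T\|_{\mathrm{cb}}$.

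\emph{Main obstacle.} The one step needing care is the bookkeeping of word order, so that creators and annihilators match the indices in the displayed composition order. It must also be checked that each operator is a genuine contraction on all of $\mathcal H$, not just on the relevant level. Both hold because each operator is a direct sum, over levels, of contractions into mutually orthogonal levels.
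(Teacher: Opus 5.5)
Your proof is correct and follows the paper's argument: the elementary bound $\|M\|_{\mathrm{op}}\le\|M\|_F$ gives $A_s\le\Gamma_s$, and a creation/bridge/annihilation operator test, adapted from ADGP with the bridge normalized by $\|M_{s,j}\|_{\mathrm{op}}$, gives $\Gamma_s\le\|T\|_{\mathrm{cb}}$. The only cosmetic difference is in the test itself: you use slot-dependent operators on one truncated Fock space and read off the vacuum coefficient $\langle\Omega,\cdot\,\Omega\rangle$, whereas the paper uses the same $Z_j$ in every slot on two disjoint chains $E$ and $F$ and reads off $\langle f_\varnothing,\cdot\,e_\varnothing\rangle$; both choices are admissible in the definition of $\|T\|_{\mathrm{cb}}$.
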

\begin{proof}
For every matrix $M$,
\[
 \|M\|_{\mathrm{op}}\le \|M\|_F,
\]
and therefore, term by term,
\[
 h_{s,j}
 \le \frac{h_{s,j}^2}{\|M_{s,j}\|_{\mathrm{op}}}
 \qquad (h_{s,j}>0).
\]
Summing in $j$ gives $A_s\le {\Gamma_s}$.

{For ${\Gamma_s}\le\|T\|_{\mathrm{cb}}$, we adapt \cite[Lemma~3.3]{ADGP} by normalizing its connecting block with $\|M_{s,j}\|_{\mathrm{op}}$ in place of $h_{s,j}$.} Fix $s$. For $0\le r\le d-s$, let $E_r$ be a copy of $\ell_2([n]^r)$ with orthonormal basis
\[
 \{e_{(i_{d-r+1},\ldots,i_d)}:(i_{d-r+1},\ldots,i_d)\in[n]^r\},
\]
where $E_0=\operatorname{span}\{e_\varnothing\}$. For $0\le t\le s-1$, let $F_t$ be a copy of $\ell_2([n]^t)$ with orthonormal basis
\[
 \{f_{(i_1,\ldots,i_t)}:(i_1,\ldots,i_t)\in[n]^t\},
\]
where $F_0=\operatorname{span}\{f_\varnothing\}$. Set
\[
 \mathcal H=\left(\bigoplus_{r=0}^{d-s}E_r\right)
 \oplus\left(\bigoplus_{t=0}^{s-1}F_t\right).
\]

Define $Z_j\in\mathcal B(\mathcal H)$, for each $j\in[n]$, by its blocks. On the $E$-levels, for $0\le r<d-s$, put
\[
 Z_j e_{(i_{d-r+1},\ldots,i_d)}
 =e_{(j,i_{d-r+1},\ldots,i_d)}.
\]
Thus, starting from $e_\varnothing$, the operators on the right of the $s$th slot produce
\[
 Z_{i_{s+1}}\cdots Z_{i_d}e_\varnothing
 =e_{(i_{s+1},\ldots,i_d)}.
\]
The bridge from $E_{d-s}$ to $F_{s-1}$ connects the chains. For $M_{s,j}\ne0$, define
\[
 Z_j e_{(i_{s+1},\ldots,i_d)}
 =\frac1{\|M_{s,j}\|_{\mathrm{op}}}
 \sum_{i_1,\ldots,i_{s-1}=1}^n
 \overline{T(e_{i_1},\ldots,e_{i_{s-1}},e_j,
 e_{i_{s+1}},\ldots,e_{i_d})}
 f_{(i_1,\ldots,i_{s-1})},
\]
and set this bridge block equal to zero when $M_{s,j}=0$. Finally, on the $F$-levels, for $1\le t\le s-1$, put
\[
 Z_j f_{(i_1,\ldots,i_t)}
 =\mathbf 1_{\{i_t=j\}}f_{(i_1,\ldots,i_{t-1})}.
\]
All unspecified blocks are zero.

The $E$-blocks are isometries, the $F$-blocks are coordinate partial isometries, and the normalized bridge is a contraction. For fixed $j$, distinct blocks have orthogonal source levels and orthogonal target levels. Thus, decomposing $x=\sum_Lx_L$ by levels, we obtain
\[
 \|Z_jx\|^2
 =\sum_L\|Z_jx_L\|^2
 \le\sum_L\|x_L\|^2
 =\|x\|^2.
\]
Hence every $Z_j$ is a contraction.

{Starting at $e_\varnothing$, we have}
\[
 Z_{i_{s+1}}\cdots Z_{i_d}e_\varnothing
 =e_{(i_{s+1},\ldots,i_d)}.
\]
The bridge operator $Z_{i_s}$ then gives
\[
 \frac1{\|M_{s,i_s}\|_{\mathrm{op}}}
 \sum_{j_1,\ldots,j_{s-1}=1}^n
 \overline{T(e_{j_1},\ldots,e_{j_{s-1}},e_{i_s},
 e_{i_{s+1}},\ldots,e_{i_d})}
 f_{(j_1,\ldots,j_{s-1})}.
\]
For $1\le t\le s-1$, the definition of the $F$-level block gives
\[
 Z_{i_t}f_{(j_1,\ldots,j_t)}
 =\mathbf 1_{\{j_t=i_t\}}f_{(j_1,\ldots,j_{t-1})}.
\]
Hence, for every $(j_1,\ldots,j_{s-1})$,
\begin{align*}
 Z_{i_1}\cdots Z_{i_{s-1}}f_{(j_1,\ldots,j_{s-1})}
 &=\left(\prod_{t=1}^{s-1}\mathbf 1_{\{j_t=i_t\}}\right)f_\varnothing.
\end{align*}
This identity selects the term $(j_1,\ldots,j_{s-1})=(i_1,\ldots,i_{s-1})$ from the bridge sum. Hence
\[
 \left\langle f_\varnothing,
 Z_{i_1}\cdots Z_{i_d}e_\varnothing\right\rangle
 =\frac{\overline{T(e_{i_1},\ldots,e_{i_d})}}
 {\|M_{s,i_s}\|_{\mathrm{op}}},
\]
with value zero when $M_{s,i_s}=0$. Therefore
\begin{align*}
 \|T\|_{\mathrm{cb}}
 &\ge
 \left|
 \left\langle f_\varnothing,
 \sum_{i_1,\ldots,i_d=1}^n
 T(e_{i_1},\ldots,e_{i_d})
 Z_{i_1}\cdots Z_{i_d}e_\varnothing
 \right\rangle\right|\\
 &=\sum_{i_1,\ldots,i_d=1}^n
 \frac{|T(e_{i_1},\ldots,e_{i_d})|^2}
 {\|M_{s,i_s}\|_{\mathrm{op}}}\\
 &=\sum_{\substack{1\le j\le n\\h_{s,j}>0}}
 \frac{\|M_{s,j}\|_F^2}{\|M_{s,j}\|_{\mathrm{op}}}
 ={\Gamma_s}.
\end{align*}
For $s=1$, the $F$-chain is absent and the bridge lands in $F_0$; for $s=d$, the $E$-chain is absent and the bridge starts in $E_0$. The same computation applies.
\end{proof}

{\begin{corollary}
For $T\ne0$, define the averaged slice defect
\[
 \delta_{\mathrm{sl}}(T)
 :=1-\frac1{d\|T\|_{\mathrm{cb}}}\sum_{s=1}^d A_s(T).
\]
Then
\[
 \sum_{s=1}^d{\sum_{\substack{1\le j\le n\\M_{s,j}\ne0}}}
 \frac{\sum_{k\ge2}\sigma_k(M_{s,j})^2}{\|M_{s,j}\|_F}
 \le 2d\,\delta_{\mathrm{sl}}(T)\|T\|_{\mathrm{cb}}.
\]
In particular, if $\|T\|_q\ge(1-\varepsilon)\|T\|_{\mathrm{cb}}$, the right-hand side is at most $2d\varepsilon\|T\|_{\mathrm{cb}}$.
\end{corollary}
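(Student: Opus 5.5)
The plan is to use Lemma~\ref{lem:spectral-input} slot by slot and compare $\Gamma_s$ with $A_s$ term by term. Fix a slot $s$ and an index $j$ with $M=M_{s,j}\ne0$. Write $\sigma_k=\sigma_k(M)$, $h=\|M\|_F$, $\sigma_1=\|M\|_{\mathrm{op}}$ and $\tau^2=\sum_{k\ge2}\sigma_k^2$. The first step is the elementary scalar inequality
\[
 \frac{h^2}{\sigma_1}-h=\frac{h(h-\sigma_1)}{\sigma_1}
 =\frac{h(h^2-\sigma_1^2)}{\sigma_1(h+\sigma_1)}
 =\frac{h\,\tau^2}{\sigma_1(h+\sigma_1)}\ \ge\ \frac{\tau^2}{2h},
\]
which uses $\sigma_1\le h$ twice: $h/\sigma_1\ge1$ and $h+\sigma_1\le 2h$. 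Summing over $j$ with $M_{s,j}\ne0$ gives
\[
 \sum_{j:\,M_{s,j}\ne0}\frac{\sum_{k\ge2}\sigma_k(M_{s,j})^2}{\|M_{s,j}\|_F}\le 2(\Gamma_s-A_s).
\]

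The second step is to bound $\Gamma_s-A_s$ by the cb norm. By Lemma~\ref{lem:spectral-input}, $\Gamma_s\le\|T\|_{\mathrm{cb}}$, so $\Gamma_s-A_s\le\|T\|_{\mathrm{cb}}-A_s$. Summing over $s\in[d]$ gives
\[
 \sum_{s=1}^d(\|T\|_{\mathrm{cb}}-A_s)=d\|T\|_{\mathrm{cb}}-\sum_sA_s=d\,\delta_{\mathrm{sl}}(T)\|T\|_{\mathrm{cb}}.
\]
This is exactly the definition of the averaged slice defect. Combining the two steps yields the bound $2d\,\delta_{\mathrm{sl}}(T)\|T\|_{\mathrm{cb}}$.

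For the ``in particular'' clause, apply Blei's inequality (Lemma~\ref{lem:blei}) to $a_i=|T(e_{i_1},\ldots,e_{i_d})|$. Then $R_s(j)=h_{s,j}$, so $\|T\|_q\le\frac1d\sum_sA_s$. Hence $\delta_{\mathrm{sl}}(T)\le 1-\|T\|_q/\|T\|_{\mathrm{cb}}\le\varepsilon$, and the right-hand side is at most $2d\varepsilon\|T\|_{\mathrm{cb}}$.

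No step is a genuine obstacle. The only care needed is the algebraic identity $h^2-\sigma_1^2=\tau^2$ together with the direction of the inequalities, and the convention that zero slices contribute zero to both $A_s$ and $\Gamma_s$.
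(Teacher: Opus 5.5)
Your proposal is correct and takes essentially the paper's route. You use the termwise inequality $h^2/\|M\|_{\mathrm{op}}-h\ge (h^2-\|M\|_{\mathrm{op}}^2)/(2h)$, sum it, and apply $\Gamma_s\le\|T\|_{\mathrm{cb}}$ from Lemma~\ref{lem:spectral-input}; the final clause then follows from Blei's inequality with the arithmetic--geometric mean step, and your factorization of $h^2/\sigma_1-h$ just writes out the one-line elementary inequality the paper states.
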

\begin{proof}
For a nonzero matrix $M$, with $h=\|M\|_F$ and $r=\|M\|_{\mathrm{op}}$,
\[
 \frac{h^2}{r}-h
 \ge\frac{h^2-r^2}{2h}
 =\frac1{2h}\sum_{k\ge2}\sigma_k(M)^2.
\]
Summing and using ${\Gamma_s}\le\|T\|_{\mathrm{cb}}$ proves the first assertion. Lemma~\ref{lem:blei}, applied to the coefficients of $T$, gives
$\|T\|_q\le({\prod_{s=1}^d A_s})^{1/d}\le d^{-1}{\sum_{s=1}^d A_s}$, and hence $\delta_{\mathrm{sl}}(T)\le\varepsilon$.
\end{proof}}

\begin{lemma}\label{lem:unitary-chain}
{Let $J_1,\ldots,J_d$ have common cardinality $m$, and let $U_r$ be unitary,
with rows indexed by $J_r$ and columns indexed by $J_{r+1}$, for
$1\le r\le d-1$. The tensor with coefficients
$Q(e_{i_1},\ldots,e_{i_d})=m^{-1}\prod_{r=1}^{d-1}U_r(i_r,i_{r+1})$
on $J_1\times\cdots\times J_d$, and zero outside, satisfies}
\[
 \|Q\|_{\mathrm{cb}}=1,\qquad \|Q\|_F=m^{-1/2}.
\]
Its active first-slot and last-slot slice Frobenius norms are all $1/m$.
\end{lemma}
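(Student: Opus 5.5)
We prove the three claims in increasing order of difficulty: the Frobenius norm, the slice norms, and then the identity $\|Q\|_{\mathrm{cb}}=1$. The Frobenius norm and the slice norms are direct computations with the unitarity of the edges.

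For the Frobenius norm we sum over $i_d$ first. Since $U_{d-1}$ is unitary, its rows have unit $\ell_2$ norm, so $\sum_{i_d}|U_{d-1}(i_{d-1},i_d)|^2=1$. Peeling the remaining edges in the same way leaves a sum over $i_1\in J_1$ of $m$ ones. Hence $\|Q\|_F^2=m^{-2}\cdot m=m^{-1}$.

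For the active first-slot slice with $i_1=j\in J_1$, the same peeling from the right gives squared Frobenius norm $m^{-2}$. For the last slot we fix $i_d$ and peel from the left. This uses the fact that the columns of a unitary matrix also have unit norm, since $U^*U=I$ for a square unitary matrix. Again the squared slice norm is $m^{-2}$. So all these slice norms equal $1/m$.

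For $\|Q\|_{\mathrm{cb}}\ge1$ we use Lemma~\ref{lem:spectral-input} with $s=1$. Each first-slot slice $M_{1,j}$ is a row matrix, so $\|M_{1,j}\|_{\mathrm{op}}=\|M_{1,j}\|_F=1/m$. Therefore $\Gamma_1=\sum_{j\in J_1}h_{1,j}=m\cdot m^{-1}=1$, and hence $\|Q\|_{\mathrm{cb}}\ge\Gamma_1=1$.

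The main step is the upper bound $\|Q\|_{\mathrm{cb}}\le1$, which we obtain by factoring the operator sum through block matrices. Fix contractions $X^s_j$ on $\mathcal H$. Define the row operator $R=m^{-1/2}\,[X^1_{i}]_{i\in J_1}:\mathcal H^{J_1}\to\mathcal H$ and the column operator $C=m^{-1/2}\,[X^d_i]_{i\in J_d}^{T}:\mathcal H\to\mathcal H^{J_d}$. Then $\|R\|^2=\|RR^*\|=\|m^{-1}\sum_i X^1_iX^{1*}_i\|\le1$, and similarly $\|C\|\le1$. For $2\le s\le d-1$ let $D_s=\operatorname{diag}(X^s_i)_{i\in J_s}$, which is a contraction. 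Let $\widetilde U_r=U_r\otimes I_{\mathcal H}$, which is unitary on $\mathcal H^{m}$.

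Multiplying out the block matrices gives the factorization
\[
 \sum_{i}Q(e_{i_1},\ldots,e_{i_d})X^1_{i_1}\cdots X^d_{i_d}
 =R\,\widetilde U_1D_2\widetilde U_2\cdots D_{d-1}\widetilde U_{d-1}\,C .
\]
The prefactor $m^{-1}$ splits as $m^{-1/2}\cdot m^{-1/2}$ between $R$ and $C$. The scalar entries $U_r(i_r,i_{r+1})$ commute with the operators, so they can be moved into the tensor blocks $\widetilde U_r$. Every factor on the right is a contraction, so the norm of the sum is at most $1$. This gives $\|Q\|_{\mathrm{cb}}\le1$.

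The only care needed is that the factorization is valid in the displayed operator order and that the argument holds over both fields. Over $\mathbb R$ the $U_r$ are orthogonal and the same argument goes through unchanged. The degenerate case $d=2$ has no diagonal factors and reduces to $R\widetilde U_1C$.
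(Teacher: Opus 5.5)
Your proof is correct and is essentially the paper's argument: the same block factorization through a normalized row operator, the diagonal blocks $D_s$, the unitaries $U_r\otimes I_{\mathcal H}$ and a normalized column operator for the upper bound; the first-slot slice bound of Lemma~\ref{lem:spectral-input} for the lower bound; and the same row/column peeling for the Frobenius and slice norms. The only differences are cosmetic: you check contractivity of the row and column operators via $\|R\|^2=\|RR^*\|$ rather than by Cauchy--Schwarz, and you invoke $\Gamma_1$ instead of $A_1$, which coincide here because the first-slot slices are single rows.
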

\begin{proof}
{Identify each $J_s$ separately with $[m]$.}
Let $X_j^s\in\mathcal B(\mathcal H)$ be contractions. Define
\[
 L=m^{-1/2}[X_1^1\ \cdots\ X_m^1]:\mathcal H^m\to\mathcal H,
 \qquad
 R=m^{-1/2}[X_1^d\ \cdots\ X_m^d]^{\mathsf T}:\mathcal H\to\mathcal H^m,
\]
and, for $2\le s\le d-1$,
\[
 D_s=\operatorname{diag}(X_1^s,\ldots,X_m^s):\mathcal H^m\to\mathcal H^m.
\]
For $(\xi_1,\ldots,\xi_m)\in\mathcal H^m$,
\begin{align*}
 \|L(\xi_1,\ldots,\xi_m)\|^2
 &=\frac1m\left\|\sum_{j=1}^mX_j^1\xi_j\right\|^2\\
 &\le\frac1m\left(\sum_{j=1}^m\|\xi_j\|\right)^2\\
 &\le\sum_{j=1}^m\|\xi_j\|^2,
\end{align*}
so $\|L\|\le1$. For $\xi\in\mathcal H$,
\[
 \|R\xi\|^2
 =\frac1m\sum_{j=1}^m\|X_j^d\xi\|^2
 \le\|\xi\|^2,
\]
and
\[
 \|D_s(\xi_1,\ldots,\xi_m)\|^2
 =\sum_{j=1}^m\|X_j^s\xi_j\|^2
 \le\sum_{j=1}^m\|\xi_j\|^2.
\]
{Hence $L,R,D_s$ are contractions. The block operator
$U_r\otimes I_{\mathcal H}$ on $\mathcal H^m$ is defined by
\[
 (U_r\otimes I_{\mathcal H})(\xi_1,\ldots,\xi_m)
 =\left(\sum_{b=1}^m U_r(a,b)\xi_b\right)_{a=1}^m.
\]
This operator is unitary because $U_r^*U_r=U_rU_r^*=I_m$.}

Expanding the block product, interpreted as $L(U_1\otimes I)R$ when $d=2$, gives for every $\xi\in\mathcal H$
\begin{align}
 &L(U_1\otimes I)D_2(U_2\otimes I)\cdots
 D_{d-1}(U_{d-1}\otimes I)R\,\xi\notag\\
 &\quad=\sum_{i_1,\ldots,i_d=1}^m
 \frac1m\left(\prod_{r=1}^{d-1}U_r(i_r,i_{r+1})\right)
 X_{i_1}^1\cdots X_{i_d}^d\xi\notag\\
 &\quad=\sum_{i_1,\ldots,i_d=1}^m
 Q(e_{i_1},\ldots,e_{i_d})
 X_{i_1}^1\cdots X_{i_d}^d\xi.
 \label{eq:unitary-chain-factorization}
\end{align}
Every factor on the first line of \eqref{eq:unitary-chain-factorization} is a contraction, so
\[
 \left\|
 \sum_{i_1,\ldots,i_d=1}^m
 Q(e_{i_1},\ldots,e_{i_d})X_{i_1}^1\cdots X_{i_d}^d
 \right\|\le1.
\]
Taking the supremum over all contraction families gives $\|Q\|_{\mathrm{cb}}\le1$.

Fix $i_1$. By the definition of $Q$,
\begin{align*}
 &\sum_{i_2,\ldots,i_d=1}^m
 |Q(e_{i_1},\ldots,e_{i_d})|^2\\
 &\quad=\frac1{m^2}
 \sum_{i_2=1}^m|U_1(i_1,i_2)|^2
 \sum_{i_3=1}^m|U_2(i_2,i_3)|^2\cdots
 \sum_{i_d=1}^m|U_{d-1}(i_{d-1},i_d)|^2\\
 &\quad=\frac1{m^2},
\end{align*}
since each row of $U_r$ has norm one. Thus every first-slot slice has Frobenius norm $1/m$, and
\[
 A_1(Q)=\sum_{i_1=1}^m\frac1m=1.
\]
Lemma~\ref{lem:spectral-input} gives
\[
 1=A_1(Q)\le\|Q\|_{\mathrm{cb}}.
\]
Combining the two bounds yields
\[
 \|Q\|_{\mathrm{cb}}=1.
\]
Furthermore,
\[
 \|Q\|_F^2
 =\sum_{i_1=1}^m
 \sum_{i_2,\ldots,i_d=1}^m
 |Q(e_{i_1},\ldots,e_{i_d})|^2
 =m\frac1{m^2}=\frac1m,
\]
so $\|Q\|_F=m^{-1/2}$. Summing in reverse order and using the unit column norms of $U_r$ gives, for each fixed $i_d$,
\[
 \sum_{i_1,\ldots,i_{d-1}=1}^m
 |Q(e_{i_1},\ldots,e_{i_d})|^2=\frac1{m^2},
\]
which proves the last-slot assertion.
\end{proof}

\begin{remark}[Classical norm of complex unitary chains]
\label{rem:classical-norm-complex-hadamard-chains}

Assume that $\mathbb K=\mathbb C$. Let $Q$ be a normalized unitary chain on $J_1\times\cdots\times J_d$, where $|J_s|=m$, with unitary edges $U_r$. Then
\[
 \|Q\|=\|Q\|_{\mathrm{cb}}=1.
\]
If each $H_r=\sqrt m\,U_r$ is Hadamard, then also $\|Q\|_q=1$: its $m^d$ nonzero coefficients have modulus $m^{-(d+1)/2}$.

Lemma~\ref{lem:unitary-chain} gives $\|Q\|_{\mathrm{cb}}=1$, and scalar contractions are admissible in the completely bounded norm, so $\|Q\|\le1$.

Conversely, write $\mathbb T=\{z\in\mathbb C:|z|=1\}$ and let
$\mathbf 1=(1,\ldots,1)^{\mathsf T}$. By the normal-form theorem of Idel and
Wolf \cite[Theorem~2]{IdelWolf}, for every unitary $U$ there are diagonal
unitaries $L,R$ such that $A=LUR$ has all row sums equal to one. Hence
$A\mathbf1=\mathbf1$, and therefore, with
$\eta:=R\mathbf1\in\mathbb T^m$,
\[
 U\eta=L^*\mathbf1\in\mathbb T^m.
\]
Thus every unitary matrix has a biunimodular vector. For
$\xi\in\mathbb T^m$, write
\[
 \operatorname{Diag}(\xi):=\operatorname{diag}(\xi(1),\ldots,\xi(m)).
\]
We choose the unimodular vectors from right to left. First choose \(\xi_d\) so that
\(U_{d-1}\xi_d\in\mathbb T^m\). At each preceding step, if
\(y\in\mathbb T^m\) is the unimodular vector already produced to the
right of \(U_r\), choose a biunimodular vector \(\eta\) for \(U_r\) and
set \(\xi_{r+1}=\eta\,\overline y\) coordinatewise. Then
\(\operatorname{Diag}(\xi_{r+1})y=\eta\), and hence
\(U_r\operatorname{Diag}(\xi_{r+1})y\in\mathbb T^m\). Continuing in this way gives
\[
\zeta:=
U_1\operatorname{Diag}(\xi_2)U_2\operatorname{Diag}(\xi_3)\cdots
\operatorname{Diag}(\xi_{d-1})U_{d-1}\xi_d
\in\mathbb T^m.
\]
Taking \(\xi_1=\overline{\zeta}\), we obtain
\[
\sum_{i_1,\ldots,i_d=1}^m
Q(e_{i_1},\ldots,e_{i_d})
\prod_{s=1}^d\xi_s(i_s)
=
\frac1m\sum_{j=1}^m|\zeta(j)|^2
=1.
\]
Thus \(\|Q\|\ge1\), proving the claim.
\end{remark}
\section{Reduction to a balanced cube}
\label{sharp-sec-cube}

{ \subsection{\texorpdfstring{{Normalization on a cube}}{Normalization on a cube}}
\label{subsec:flat-scale-bookkeeping}

Let $q=2d/(d+1)$. If
\[
J=J_1\times\cdots\times J_d,
\qquad
|J_1|=\cdots=|J_d|=m,
\qquad
N=|J|=m^d,
\]
then a tensor supported on $J$, of coefficient $\ell_q$ norm one and with constant modulus on $J$, has coefficient modulus
\[
\rho=N^{-1/q}=m^{-(d+1)/2}.
\]
Thus
\[
\rho N^{1/q}=1.
\]
If $T_J$ denotes the restriction of $T$ to $J$ and $F=\rho^{-1}T_J$, with
normalized counting measure on $J$, then
\[
\|F\|_{L^q(J)}=\|T_J\|_{\ell_q(J)}.
\]
Moreover, for every conditional slice $M^J_{s,j}(T)$ of $T_J$,
\[
\|\mathsf M_{s,j}(F)\|_{\mathrm{HS},\mathrm{prob}}
=\rho^{-1}m^{-(d-1)/2}\|M^J_{s,j}(T)\|_F
=m\|M^J_{s,j}(T)\|_F.
\]}

\begin{lemma}\label{lem:q-moment-variance}
Let $1<q<2$, let $(\Omega,\mu)$ be a finite probability space, and let
$x\ge0$ satisfy $\mathbb E x=1$. Then
\[
 1\le \mathbb E x^q
 \le 1+(q-1)\mathbb E(x-1)^2.
\]
Consequently, if $\mathbb E(x-1)^2\le\eta$, then
\[
 1\le (\mathbb E x^q)^{1/q}\le 1+C_q\eta
\]
whenever $0\le\eta\le1$.
\end{lemma}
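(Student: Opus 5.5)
The lower bound comes from Jensen's inequality, and the upper bound from a pointwise inequality comparing $t^q$ with its second-order Taylor polynomial at $t=1$.

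\emph{Lower bound.} Since $q>1$, the map $t\mapsto t^q$ is convex on $[0,\infty)$. Because $\mathbb E x=1$, Jensen's inequality gives $\mathbb E x^q\ge(\mathbb E x)^q=1$.

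\emph{Upper bound.} We aim to prove, for every $t\ge0$,
\[
 \phi(t):=1+q(t-1)+(q-1)(t-1)^2-t^q\ge0 .
\]
Taking expectations and using $\mathbb E(x-1)=0$ then yields $\mathbb E x^q\le 1+(q-1)\mathbb E(x-1)^2$.

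To check $\phi\ge0$, first note $\phi(1)=0$ and
\[
 \phi'(t)=q+2(q-1)(t-1)-qt^{q-1},
\]
so $\phi'(1)=0$. The second derivative is
\[
 \phi''(t)=2(q-1)-q(q-1)t^{q-2}=(q-1)\bigl(2-qt^{q-2}\bigr).
\]
Because $q-2<0$, the factor $t^{q-2}$ is decreasing. Hence $\phi''<0$ on $(0,t_0)$ and $\phi''>0$ on $(t_0,\infty)$, where $t_0=(q/2)^{1/(2-q)}<1$.

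\emph{Case $t\ge1$.} Here $t\ge t_0$, so $\phi$ is convex on $[1,\infty)$. Convexity, $\phi'(1)=0$ and $\phi(1)=0$ give $\phi\ge0$ there.

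\emph{Case $t\in[0,1)$.} On $[t_0,1]$, $\phi$ is convex, so $\phi'$ is increasing there and $\phi'\le\phi'(1)=0$. On $[0,t_0]$, $\phi$ is concave, so its minimum over that interval is attained at an endpoint. Therefore it suffices to check $\phi(0)\ge0$ and $\phi(t_0)\ge0$.
\begin{itemize}
\item At $t=0$: $\phi(0)=1-q+(q-1)=0$.
\item At $t_0$: $\phi$ is nonincreasing on $[t_0,1]$ and $\phi(1)=0$, so $\phi(t_0)\ge0$.
\end{itemize}
Hence $\phi\ge0$ on all of $[0,\infty)$. This boundary case at $t=0$ is the only delicate point: the coefficient $q-1$ is exactly the one that makes $\phi(0)=0$.

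\emph{Consequence.} If $\mathbb E(x-1)^2\le\eta$ with $0\le\eta\le1$, then
\[
 1\le\mathbb E x^q\le 1+(q-1)\eta .
\]
Taking $1/q$-th powers and using $(1+u)^{1/q}\le 1+u/q$ for $u\ge0$ (concavity, since $1/q<1$) gives
\[
 1\le(\mathbb E x^q)^{1/q}\le 1+\tfrac{q-1}{q}\,\eta .
\]
So one may take $C_q=(q-1)/q$.
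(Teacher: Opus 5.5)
Your proof is correct and is essentially the paper's: after expanding, $\phi(t)\ge0$ is exactly the paper's pointwise bound $t^q\le(q-1)t^2+(2-q)t$, followed by the same expectation step and the same $1/q$-th root estimate. The only difference is that the paper gets this pointwise bound in one line from weighted Young (AM--GM) applied to $t^q=(t^2)^{q-1}t^{2-q}$, whereas you establish it by a convexity case analysis, which is equally valid.
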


\begin{proof}
Convexity of $t\mapsto t^q$ and $\mathbb E x=1$ give
\[
 1=(\mathbb E x)^q\le\mathbb E x^q.
\]
For $x\ge0$, weighted Young's inequality gives
\[
 x^q\le(q-1)x^2+(2-q)x.
\]
Taking expectations and using
\[
 \mathbb E x^2=\mathbb E(x-1)^2+2\mathbb E x-1
 =1+\mathbb E(x-1)^2
\]
yields
\begin{align*}
 \mathbb E x^q
 &\le(q-1)\mathbb E x^2+(2-q)\mathbb E x\\
 &=(q-1)\bigl(1+\mathbb E(x-1)^2\bigr)+(2-q)\\
 &=1+(q-1)\mathbb E(x-1)^2.
\end{align*}
The last assertion follows from $(1+t)^{1/q}\le1+C_qt$ for
$0\le t\le q-1$.
\end{proof}
\medskip
\begin{lemma}
\label{sharp-lem-cube}
Fix $d\ge2$ and $q=2d/(d+1)$.  Let $a$ be a nonzero nonnegative array on
{a finite Cartesian product $\mathcal Y=Y_1\times\cdots\times Y_d$, and set
\[
 R_s(j)=\left(\sum_{\substack{i\in\mathcal Y\\i_s=j}}a_i^2\right)^{1/2},
 \qquad A_s=\sum_{j\in Y_s}R_s(j)\qquad(j\in Y_s).
\]}
Suppose that
\[
 A_s\le1\quad(1\le s\le d),\qquad
 \|a\|_{\ell_q}\ge1-\varepsilon.
\]
There are constants $C_d<\infty$ and $\varepsilon_d>0$, depending only
on $d$, with the following property.  If
$0\le\varepsilon\le\varepsilon_d$, there are nonempty sets {$J_s\subseteq Y_s$} of a
common cardinality $m$ such that, with
\[
 J=\prod_{s=1}^dJ_s,\qquad N=m^d,\qquad \rho=N^{-1/q},
\]
one has
\begin{align}
 {\sum_{i\in\mathcal Y\setminus J}}a_i^q&\le C_d\varepsilon,
 \label{sharp-eq-tail}\\
 \frac1N\sum_{i\in J}\left|\frac{a_i}{\rho}-1\right|^2
 &\le C_d\varepsilon,
 \label{sharp-eq-variance}\\
 \left|\frac1N\sum_{i\in J}\frac{a_i}{\rho}-1\right|
 +\left|\frac1N\sum_{i\in J}\left(\frac{a_i}{\rho}\right)^2-1\right|
 &\le C_d\varepsilon.
 \label{sharp-eq-moments}
\end{align}
The original full slice norms satisfy
\begin{equation}
 R_s(j)\le\frac{C_d}{m}\qquad(j\in J_s).
 \label{sharp-eq-full-slices}
\end{equation}
In particular,
\begin{equation}
 \|a-\rho\mathbf1_J\|_{\ell_q}\le C_d\sqrt\varepsilon.
 \label{sharp-eq-flat-approximation}
\end{equation}
\end{lemma}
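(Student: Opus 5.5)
The plan is to reprove the first inequality of Lemma~\ref{lem:blei} by a two-step argument in which every inequality has an explicit defect, and then read the conclusions off those defects. For $i\in\mathcal Y$ with $a_i>0$ put $P_i=\prod_{s=1}^dR_s(i_s)^{1/d}$. Write $a_i^q=(a_i^2/P_i)^{q/2}P_i^{q/2}$ and apply Hölder with exponents $2/q$ and $2/(2-q)$. Since $q/(2-q)=d$, this gives
\[
\sum_i a_i^q\le\Bigl(\sum_i\frac{a_i^2}{P_i}\Bigr)^{q/2}\Bigl(\sum_i P_i^{d}\Bigr)^{(2-q)/2}.
\]
Here $\sum_iP_i^d\le\prod_sA_s\le1$, with equality up to the terms where $a_i=0$. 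The AM--GM inequality $P_i^{-1}\le\frac1d\sum_sR_s(i_s)^{-1}$, together with $\sum_ia_i^2/R_s(i_s)=A_s$, gives $\sum_ia_i^2/P_i\le\frac1d\sum_sA_s\le1$. The hypothesis $\|a\|_q\ge1-\varepsilon$ then forces three defects to be $O_d(\varepsilon)$:
\begin{itemize}
\item $1-A_s$ for every $s$;
\item the AM--GM defect, weighted by $a_i^2$;
\item the Hölder defect.
\end{itemize}

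The second step converts these defects into near-constancy of the slice norms. For the Hölder defect I will use the quantitative form behind Lemma~\ref{lem:q-moment-variance}. Normalize $x_i=a_i^2/(\lambda P_i^{d+1})$ under the probability weights $P_i^d/\sum P^d$. Hölder near equality then gives a weighted variance bound $\mathbb E(x-1)^2\le C_d\varepsilon$, that is, $a_i^2\approx\lambda P_i^{d+1}$ in weighted $L^2$. The exact-equality computation shows the mechanism. If $a_i^2=\lambda P_i^{d+1}$ held exactly, summing over $i_s=j$ would give
\[
R_s(j)^2=c_s\,R_s(j)^{(d+1)/d},
\]
so $R_s(j)^{(d-1)/d}$ would be constant on the support in each slot. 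The AM--GM near equality forces these constants to agree across slots. I would make this quantitative by comparing $R_s(j)^2$ with $c_sR_s(j)^{(d+1)/d}$ slot by slot via Cauchy--Schwarz. This yields a common level $c$ such that $\sum_{j:\,|R_s(j)/c-1|>1/2}R_s(j)\lesssim_d\varepsilon$, and the corresponding $a$-mass off the good set is also $\lesssim_d\varepsilon$.

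Next I define the good sets $J_s^0=\{j:|R_s(j)/c-1|\le\frac12\}$. Since $A_s\approx1$, each satisfies $|J_s^0|\,c=1+O_d(\varepsilon)$, so the sizes agree up to a factor $1+O_d(\varepsilon)$. I then trim each to a common size $m$, discarding the indices of smallest $R_s$. The discarded mass is $O_d(\varepsilon)$ in $\ell_q^q$, using $a_i^q\le$ a bound in terms of $P_i$ and Hölder again; this gives \eqref{sharp-eq-tail}. The bound \eqref{sharp-eq-full-slices} holds because $R_s(j)\le\frac32c\approx\frac{3}{2m}$ on $J_s$. On $J$ the weights $P_i^d$ are comparable to uniform, and $\lambda P_i^{d+1}\approx\rho^2$. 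The variance bound therefore transfers to $\frac1N\sum_J|a_i^2/\rho^2-1|^2\le C_d\varepsilon$, which implies \eqref{sharp-eq-variance} because $|t-1|\le|t^2-1|$ for $t\ge0$. The moment bounds \eqref{sharp-eq-moments} follow from $\sum_Ja_i^q=1-O_d(\varepsilon)$, $\sum_Ja_i^2\le\sum_sA_s^2/\ldots\approx N\rho^2$, and the variance bound, by the same convexity expansion as in Lemma~\ref{lem:q-moment-variance}. Finally, \eqref{sharp-eq-flat-approximation} follows from \eqref{sharp-eq-tail} together with $\|(a-\rho)\mathbf1_J\|_{\ell_q}=\rho N^{1/q}\|a/\rho-1\|_{L^q(J)}\le\|a/\rho-1\|_{L^2(J)}\le C_d\sqrt\varepsilon$. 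This last step is the source of the square root.

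I expect the main obstacle to be the quantitative rigidity step, namely passing from "$a_i^2\approx\lambda P_i^{d+1}$ in weighted $L^2$ and $A_s\approx1$" to "all $R_s(j)$ lie near one common level $c$" with linear dependence on $\varepsilon$ and with no dependence on $|\mathcal Y|$. The difficulty is that the map $R\mapsto R^{(d-1)/d}$ degenerates at small values: many tiny slices could carry a small total $\ell_1$ mass while distorting the counts $|J_s^0|$. I would first try a dyadic decomposition of $\{R_s(j)\}$ into levels $2^{-k}c$. I would bound the mass in each off-diagonal level by the Hölder defect with a factor that decays geometrically in $|k|$, and use $A_s\le1$ to control the total count. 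Should that fail, the fallback is a weaker bound on the set of levels, followed by the variance estimate to bootstrap back to linear order.
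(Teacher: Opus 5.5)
\textbf{There is a genuine gap, in two places.} Your H\"older/AM--GM reproof of Lemma~\ref{lem:blei} is correct, and the three defects are indeed $O_d(\varepsilon)$.

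\emph{First gap: what you extract from the H\"older defect.} With $w_i=P_i^d/\sum P^d$ and $\lambda$ chosen so that $\mathbb E_wx=1$, that step is exactly Jensen's inequality $\mathbb E_wx^{q/2}\le1$ for the concave map $t\mapsto t^{q/2}$. Its pointwise defect $1+\frac{q}{2}(t-1)-t^{q/2}$ grows only linearly as $t\to\infty$. It therefore controls the Hellinger-type quantity $\mathbb E_w(\sqrt x-1)^2$, but not $\mathbb E_w(x-1)^2$, and the variance bound you assert is false. Take $d=2$, let $a=\rho=m^{-3/2}$ on $[m]^2$ except $a_{11}=\sqrt m\,\rho$, and divide by $\max_sA_s=1+(\sqrt2-1)/m+O(m^{-2})$. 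Then $\varepsilon\approx(\sqrt2-1)/m$. Yet $w_{11}\approx2/m^2$ and $x_{11}\approx m/2^{3/2}$, so this single entry contributes about $1/4$ to $\mathbb E_w(x-1)^2$. Any sensible common level here is $c\approx1/m$, and $R_1(1)/c\approx R_2(1)/c\approx\sqrt2<3/2$. So your $J$ is all of $[m]^2$, and $\frac1N\sum_J|a_i^2/\rho^2-1|^2\approx1$. Meanwhile the left side of \eqref{sharp-eq-variance} is only about $1/m$. Hence \eqref{sharp-eq-variance} cannot be reached through $|t-1|\le|t^2-1|$. You must work at the level of $a$ itself, i.e.\ with square roots of densities. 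This is why the paper applies AM--GM to the square roots of $\Pi(i)=\prod_sR_s(i_s)/A_s$ and $\nu_s(i)=a_i^2/(R_s(i_s)A_s)$, obtaining the Hellinger bounds \eqref{sharp-eq-hellinger}.

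\emph{Second gap: the step you flag as the main obstacle.} The dyadic plan is not yet an argument. It lacks a mechanism that forces all slots onto one level with exceptional mass linear in $\varepsilon$. The paper supplies this with two ideas.
\begin{itemize}
\item Put $W_s=A_sR_s(i_s)$ and $\eta_s=\sqrt{\nu_s/\Pi}$. The pointwise identity $\eta_s\sqrt{W_s}=a_i/\sqrt{\Pi(i)}$ is the same for every $s$. It turns the Hellinger bounds into $\mathbb E_\Pi d_0(W_s,W_t)^2\le4(d+1)^2\delta_{\mathrm{Blei}}$, where $d_0(w,z)=|w-z|/(w+z)$ is a scale-invariant metric.
\item $\Pi$ is a product law, so freezing one coordinate yields a single level $z$. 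Markov's inequality then bounds the $p_s$-mass outside the sets $\{z/3\le A_s^2p_s(j)\le3z\}$ by $O_d(\delta_{\mathrm{Blei}})$. Tiny slices are harmless, since they fall outside these sets.
\end{itemize}
Your claim $|J_s^0|c=1+O_d(\varepsilon)$ is also unsupported. From $A_s\approx1$ and $R_s\in[c/2,3c/2]$ on $J_s^0$ you only get $|J_s^0|c\in[2/3,2]$, roughly. The level-set analysis improves this only to $1+O_d(\sqrt{\delta_{\mathrm{Blei}}})$. The paper obtains balance of order $\varepsilon$ by a separate argument. Let $b$ be the mean of $a$ on the box $I=\prod_sI_s$, with $N_I=|I|$ and $m_s=|I_s|$. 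Cauchy--Schwarz on each slice gives $b\sqrt{N_Im_s}\le A_s\le1$, and near-flatness on $I$ gives $bN_I^{1/q}\ge1-C_d\varepsilon$. Hence $m_s\le(1+C_d\varepsilon)N_I^{1/d}$ for every $s$, and $\prod_sm_s=N_I$ supplies the matching lower bound. Your trimming and tail estimates rely on this balance, so they inherit the gap.
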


\begin{proof}
\noindent\emph{Step 1: Hellinger control.}
{Write $k=d+1$ and $B=(\prod_{s=1}^d A_s)^{1/d}$.
All the marginal sums $A_s$ are positive. Define}
probability distributions
\[
 p_s(j)=\frac{R_s(j)}{A_s},\qquad
 \Pi(i)=\prod_{s=1}^dp_s(i_s),\qquad
 \nu_s(i)=\frac{a_i^2}{R_s(i_s)A_s},
\]
where a quotient with zero denominator is assigned the value zero.
{Set
$\Omega=\prod_{s=1}^d\{j\in Y_s:p_s(j)>0\}$.
The array $a$ vanishes outside $\Omega$, and $\Pi>0$ on $\Omega$. Indeed,}
\begin{align*}
 \Pi(i)\prod_{s=1}^d\nu_s(i)
 &=\prod_{s=1}^d\frac{R_s(i_s)}{A_s}
   \prod_{s=1}^d\frac{a_i^2}{R_s(i_s)A_s}\\
 &=\frac{a_i^{2d}}{\prod_{s=1}^d A_s^2}
 =\frac{a_i^{2d}}{B^{2d}}.
\end{align*}
Since $k=d+1$ and $q=2d/(d+1)$, it follows that
\[
 G(i):=\left(\Pi(i)\prod_{s=1}^d\nu_s(i)\right)^{1/k}
      =\frac{a_i^q}{B^q}.
\]
H\"older's inequality, applied to the $d+1$ probability distributions
$\Pi,\nu_1,\ldots,\nu_d$, gives
\[
 {\sum_{i\in\Omega}}G(i)
 \le \left({\sum_{i\in\Omega}}\Pi(i)\right)^{1/k}
      \prod_{s=1}^d\left({\sum_{i\in\Omega}}\nu_s(i)\right)^{1/k}
 =1.
\]
{Define}
\[
\delta_{\mathrm{Blei}}
:=
1-
\left(
\frac{\|a\|_{\ell_q}}{B}
\right)^q.
\]
Since \(B\le1\) and \(\|a\|_{\ell_q}\ge1-\varepsilon\), we have
\[
0\le\delta_{\mathrm{Blei}}
\le
1-(1-\varepsilon)^q
\le
q\varepsilon.
\]

For nonnegative numbers $F_0,\ldots,F_d$, the arithmetic--geometric
mean inequality applied to their square roots gives
\[
 \frac1k\sum_{r=0}^dF_r
 -\left(\prod_{r=0}^dF_r\right)^{1/k}
 \ge\frac1k\sum_{r=0}^dF_r
      -\left(\frac1k\sum_{r=0}^d\sqrt{F_r}\right)^2
 =\frac1{k^2}{\sum_{0\le r<t\le d}}(\sqrt{F_r}-\sqrt{F_t})^2.
\]
Apply this inequality pointwise to $F_0=\Pi$ and $F_s=\nu_s$, and sum.
In particular,
\begin{equation}
 \|\sqrt{\nu_s}-\sqrt\Pi\|_{\ell_2}^2\le k^2\delta_{\mathrm{Blei}}
 \qquad(1\le s\le d).
 \label{sharp-eq-hellinger}
\end{equation}

\medskip
\noindent\emph{Step 2: Concentration around a common level.}
{Under the probability law $\Pi$ on $\Omega$, the coordinate maps
$X_s(i_1,\ldots,i_d)=i_s$ are independent and have laws $p_s$. Put}
\[
 W_s=A_s^2p_s(X_s),\qquad \eta_s=\sqrt{\nu_s/\Pi}.
\]
Since $p_s(i_s)=R_s(i_s)/A_s$,
\[
 W_s=A_s^2p_s(i_s)=A_sR_s(i_s),
\]
and hence, on the positive marginal support,
\[
 \eta_s^2W_s
 =\frac{\nu_s}{\Pi}A_sR_s(i_s)
 =\frac{a_i^2}{\Pi(i)}.
\]
Thus
\[
 \eta_s\sqrt{W_s}=\frac{a_i}{\sqrt{\Pi(i)}}
 =\eta_t\sqrt{W_t}
\]
pointwise, also when $a_i=0$.
For positive $w,z$, define
\[
 d_0(w,z)=\frac{|w-z|}{w+z}
 =\tanh\!\left(\frac{|\log w-\log z|}{2}\right).
\]
This is a metric: its triangle inequality follows from that for absolute logarithmic differences, the monotonicity of $\tanh$, and $\tanh(x+y)\le\tanh x+\tanh y$ for $x,y\ge0$.

If $r,t\ge0$ and $r\sqrt w=t\sqrt z$, write their common value as
$c=r\sqrt w=t\sqrt z$. Then
\[
 (r-1)^2+(t-1)^2
 =\left(\frac{c}{\sqrt w}-1\right)^2
  +\left(\frac{c}{\sqrt z}-1\right)^2.
\]
The quadratic on the right is minimized at
\[
 c_*=\frac{\sqrt{wz}(\sqrt w+\sqrt z)}{w+z},
\]
and substitution gives
\[
 (r-1)^2+(t-1)^2
 \ge\frac{(\sqrt w-\sqrt z)^2}{w+z}
 \ge\frac12d_0(w,z)^2.
\]
Consequently, \eqref{sharp-eq-hellinger} yields
\begin{equation}
 \mathbb E_\Pi d_0(W_s,W_t)^2\le4k^2\delta_{\mathrm{Blei}}.
 \label{sharp-eq-pair-metric}
\end{equation}
For $(s,t)=(1,2)$, independence gives
\[
 \mathbb E d_0(W_1,W_2)^2
 =\mathbb E_{X_2}\!\left[
   \mathbb E_{X_1}d_0(W_1,W_2)^2\mid X_2\right]
 \le4k^2\delta_{\mathrm{Blei}}.
\]
Hence some $j_2$ in the support of $p_2$ satisfies
\[
 \mathbb E_{X_1}d_0\bigl(W_1,A_2^2p_2(j_2)\bigr)^2
 \le4k^2\delta_{\mathrm{Blei}}.
\]
Set $z:=A_2^2p_2(j_2)>0$. For every $s$,
\[
 d_0(W_s,z)\le d_0(W_s,W_1)+d_0(W_1,z),
\]
so $(x+y)^2\le2x^2+2y^2$ and \eqref{sharp-eq-pair-metric} give
\begin{equation}
 \mathbb E d_0(W_s,z)^2\le16k^2\delta_{\mathrm{Blei}}
 \qquad(1\le s\le d).
 \label{sharp-eq-common-level}
\end{equation}

Choose the fixed-threshold level sets
\[
 {I_s=\{j\in Y_s:p_s(j)>0,\ d_0(A_s^2p_s(j),z)\le1/2\}
     =\{j\in Y_s:z/3\le A_s^2p_s(j)\le3z\}.}
\]
Write
\[
 m_s=|I_s|,\qquad \theta_s=\sum_{j\in I_s}p_s(j),\qquad
 v_s=\frac{A_s^2}{z},\qquad u_s(j)=v_sp_s(j).
\]
By Markov's inequality and \eqref{sharp-eq-common-level},
\[
 \mathbb P\bigl(d_0(W_s,z)>1/2\bigr)
 \le4\mathbb E d_0(W_s,z)^2\le C_d\delta_{\mathrm{Blei}}.
\]
For small $\delta_{\mathrm{Blei}}$, this probability is less than one, so every $I_s$ is nonempty. Moreover,
\begin{equation}
 1-\theta_s\le C_d\delta_{\mathrm{Blei}},\qquad
 \frac13\le u_s(j)\le3,\qquad
 \sum_{j\in I_s}p_s(j)|u_s(j)-1|^2\le C_d\delta_{\mathrm{Blei}}.
 \label{sharp-eq-level-moments}
\end{equation}
For the last estimate, use
$|u-1|=(u+1)d_0(u,1)\le4d_0(u,1)$ on $[1/3,3]$.
Since $u_s(j)=v_sp_s(j)$,
\[
 \frac{m_s}{v_s}
 =\sum_{j\in I_s}\frac1{v_s}
 =\sum_{j\in I_s}\frac{p_s(j)}{u_s(j)}.
\]
Subtracting $1$ and using $\theta_s=\sum_{j\in I_s}p_s(j)$ gives
\begin{align*}
 \left|\frac{m_s}{v_s}-1\right|
 &\le \left|\sum_{j\in I_s}p_s(j)
       \left(\frac1{u_s(j)}-1\right)\right|+|\theta_s-1|\\
 &\le 3\sum_{j\in I_s}p_s(j)|u_s(j)-1|+C_d\delta_{\mathrm{Blei}}\\
 &\le 3\theta_s^{1/2}
       \left(\sum_{j\in I_s}p_s(j)|u_s(j)-1|^2\right)^{1/2}
       +C_d\delta_{\mathrm{Blei}}\\
 &\le C_d\sqrt\delta_{\mathrm{Blei}}.
\end{align*}
The second line uses $|u^{-1}-1|\le3|u-1|$ on $[1/3,3]$; the third uses Cauchy--Schwarz. Thus
\begin{equation}
 \kappa_s:=\frac{m_s}{v_s}=1+O_d(\sqrt\delta_{\mathrm{Blei}}).
 \label{sharp-eq-cardinality-preliminary}
\end{equation}
In particular, $\kappa_s\in[1/2,2]$ for small $\delta_{\mathrm{Blei}}$.
Now $m_sp_s(j)=\kappa_su_s(j)$, and the function
$x\mapsto1-x^{-1/2}$ is uniformly Lipschitz on $[1/6,6]$.
Equations \eqref{sharp-eq-level-moments} and
\eqref{sharp-eq-cardinality-preliminary} therefore give
\begin{equation}
 \begin{split}
 \sum_{j\in I_s}\left|\sqrt{p_s(j)}-m_s^{-1/2}\right|^2
 &=\sum_{j\in I_s}p_s(j)
     \left|1-(\kappa_su_s(j))^{-1/2}\right|^2\\
 &\le C_d\delta_{\mathrm{Blei}}.
 \end{split}
 \label{sharp-eq-marginal-flatness}
\end{equation}
Let $I={\prod_{s=1}^d}I_s$ and $N_I:={\prod_{s=1}^d}m_s$. Put
$f_s(j)=\sqrt{p_s(j)}$ and $g_s(j)=m_s^{-1/2}$ on $I_s$. Then
\[
 \sqrt{\Pi(i)}=\prod_{s=1}^df_s(i_s),\qquad
 N_I^{-1/2}=\prod_{s=1}^dg_s(i_s),
\]
and
\[
 \prod_{s=1}^df_s-\prod_{s=1}^dg_s
 =\sum_{r=1}^d
   \left({\prod_{s=1}^{r-1}}g_s\right)(f_r-g_r)
   \left({\prod_{s=r+1}^d}f_s\right).
\]
Since counting measure factors over the product and $\|f_s\|_{\ell_2(I_s)}\le1$, $\|g_s\|_{\ell_2(I_s)}=1$, we obtain
\[
 \|\sqrt\Pi-N_I^{-1/2}\|_{\ell_2(I)}
 \le\sum_{r=1}^d\|f_r-g_r\|_{\ell_2(I_r)}.
\]
Using \eqref{sharp-eq-marginal-flatness},
\begin{equation}
 \|\sqrt\Pi-N_I^{-1/2}\|_{\ell_2(I)}\le C_d\sqrt\delta_{\mathrm{Blei}}.
 \label{sharp-eq-product-flatness}
\end{equation}

\medskip
The geometric mean $\sqrt{G/\Pi}=({\prod_{s=1}^d}\eta_s)^{1/k}$ lies between the minimum and maximum of $1,\eta_1,\ldots,\eta_d$. Hence, pointwise,
\[
 |\sqrt{G/\Pi}-1|^2\le{\sum_{s=1}^d}|\eta_s-1|^2.
\]
After multiplication by $\Pi$ and summation,
\begin{equation}
 \|\sqrt G-\sqrt\Pi\|_{\ell_2}^2\le dk^2\delta_{\mathrm{Blei}}.
 \label{sharp-eq-GQ-hellinger}
\end{equation}
Also $\Pi(I^c)\le{\sum_{s=1}^d}(1-\theta_s)$.  Thus
\eqref{sharp-eq-level-moments} and
\eqref{sharp-eq-GQ-hellinger} imply
\begin{equation}
 {\sum_{i\in\Omega\setminus I}}G(i)
 \le2\Pi(I^c)+2\|\sqrt G-\sqrt\Pi\|_{\ell_2}^2
 \le C_d\delta_{\mathrm{Blei}}.
 \label{sharp-eq-box-tail}
\end{equation}
As $B\le1$, this proves
\begin{equation}
 {\sum_{i\in\Omega\setminus I}}a_i^q\le C_d\delta_{\mathrm{Blei}},\qquad
 \|a\|_{\ell_q(I)}\ge1-C_d\varepsilon.
 \label{sharp-eq-box-mass}
\end{equation}

\medskip
On $I$, since $p_1(i_1)=R_1(i_1)/A_1$,
\[
 u_1(i_1)\nu_1(i)
 =\frac{A_1^2p_1(i_1)}{z}
  \frac{a_i^2}{R_1(i_1)A_1}
 =\frac{a_i^2}{z}.
\]
As $a_i\ge0$, this gives
\[
 \frac {a_i}{\sqrt z}=\sqrt{u_1(i_1)}\sqrt{\nu_1(i)}.
\]
Using \eqref{sharp-eq-hellinger},
\eqref{sharp-eq-level-moments}, and
\eqref{sharp-eq-product-flatness}, we obtain
\begin{align*}
 \|a/\sqrt z-N_I^{-1/2}\|_{\ell_2(I)}
 &\le\sqrt3\|\sqrt{\nu_1}-\sqrt\Pi\|_{\ell_2}
   +\|(\sqrt{u_1}-1)\sqrt\Pi\|_{\ell_2(I)}\\
 &\hspace{16mm}+\|\sqrt\Pi-N_I^{-1/2}\|_{\ell_2(I)}
 \le {C_d\sqrt{\delta_{\mathrm{Blei}}}}.
\end{align*}
Indeed, the square of the middle term is at most
$\sum_{j\in I_1}p_1(j)|u_1(j)-1|^2$.
Put
\[
 \rho_I:=\sqrt{z/N_I},\qquad b:=\frac1{N_I}\sum_{i\in I}a_i.
\]
{Consequently,}
\begin{equation}
 \begin{aligned}
 \frac1{N_I}\sum_{i\in I}|a_i-\rho_I|^2&\le C_d\delta_{\mathrm{Blei}}\rho_I^2,\\
 b&=\rho_I(1+O_d(\sqrt\delta_{\mathrm{Blei}})),\\
 \frac1{N_I}\sum_{i\in I}|a_i/b-1|^2&\le C_d\delta_{\mathrm{Blei}}.
 \end{aligned}
 \label{sharp-eq-box-variance}
\end{equation}

\medskip
\noindent \emph{Step 3: Balancing the side lengths.}
Apply Lemma~\ref{lem:q-moment-variance} on $I$ with normalized
counting measure and $x_i=a_i/b$.  By definition of $b$,
\[
 \frac1{N_I}\sum_{i\in I}x_i=1,
\]
and \eqref{sharp-eq-box-variance} gives
\[
 \frac1{N_I}\sum_{i\in I}(x_i-1)^2\le C_d\delta_{\mathrm{Blei}}.
\]
Hence
\[
 1\le \frac1{N_I}\sum_{i\in I}x_i^q\le1+C_d\delta_{\mathrm{Blei}}.
\]
Multiplying by $b^qN_I$ and taking the $q$th root gives
\begin{equation}
 \|a\|_{\ell_q(I)}
 =bN_I^{1/q}\left(\frac1{N_I}\sum_{i\in I}x_i^q\right)^{1/q}
 =bN_I^{1/q}(1+O_d(\delta_{\mathrm{Blei}})).
 \label{sharp-eq-mean-q-comparison}
\end{equation}
For each slot $s$,
\[
 N_Ib=\sum_{i\in I}a_i
 =\sum_{j\in I_s}\sum_{\substack{i\in I\\ i_s=j}}a_i.
\]
Each selected slice contains $N_I/m_s$ indices, so Cauchy--Schwarz gives
\[
 \sum_{\substack{i\in I\\ i_s=j}}a_i
 \le\sqrt{\frac{N_I}{m_s}}
      \left(\sum_{\substack{i\in I\\ i_s=j}}a_i^2\right)^{1/2}
 \le\sqrt{\frac{N_I}{m_s}}R_s(j).
\]
Summing over $j\in I_s$ yields
\[
 N_Ib\le\sqrt{\frac{N_I}{m_s}}A_s,
\]
and therefore
\[
 1\ge A_s\ge b\sqrt{N_I m_s}
 =bN_I^{1/q}\sqrt{m_s/N_I^{1/d}}.
\]
Hence \eqref{sharp-eq-box-mass} and
\eqref{sharp-eq-mean-q-comparison} imply
\begin{equation}
 \max_s m_s\le(1+C_d\varepsilon)N_I^{1/d}.
 \label{sharp-eq-side-balance}
\end{equation}
Put
\[
 r_s:=\frac{m_s}{N_I^{1/d}}.
\]
Then $\prod_{s=1}^d r_s=1$, while
\eqref{sharp-eq-side-balance} gives $r_s\le1+C_d\varepsilon$ for every
$s$.  If $r_{s_0}=\min_s r_s$, then
\[
 1=\prod_{s=1}^d r_s
 \le r_{s_0}(1+C_d\varepsilon)^{d-1},
\]
so
\[
 r_{s_0}\ge(1+C_d\varepsilon)^{-(d-1)}
          \ge1-C_d\varepsilon
\]
for small $\varepsilon$. Thus every $r_s$ lies between $1-C_d\varepsilon$ and $1+C_d\varepsilon$, giving
\[
 \frac{\max_s m_s}{\min_s m_s}\le1+C_d\varepsilon
\]
after enlarging the constant.

\noindent \emph{Step 4: Trimming and normalization.}
Set $m=\min_sm_s$ and $N=m^d$.  Select subsets
$J_s\subseteq I_s$ of cardinality $m$ so that
\begin{equation}
 \sum_{i\in J}a_i^q\ge\frac N{N_I}\sum_{i\in I}a_i^q,
 \qquad J={\prod_{s=1}^d}J_s.
 \label{sharp-eq-random-trimming}
\end{equation}
Choose each $J_s$ independently and uniformly among the $m$-element
subsets of $I_s$. Then
\begin{align*}
 \mathbb E\sum_{i\in J}a_i^q
 &=\sum_{i\in I}a_i^q
   \mathbb P(i_1\in J_1,\ldots,i_d\in J_d)\\
 &=\sum_{i\in I}a_i^q\prod_{s=1}^d\frac{m}{m_s}
 =\frac{N}{N_I}\sum_{i\in I}a_i^q.
\end{align*}
Some choice of the $J_s$ therefore satisfies \eqref{sharp-eq-random-trimming}. Moreover, \eqref{sharp-eq-side-balance} gives $N/N_I\ge1-C_d\varepsilon$.
Thus \eqref{sharp-eq-box-mass} and
\eqref{sharp-eq-random-trimming} imply
\eqref{sharp-eq-tail} and
\begin{equation}
 1-C_d\varepsilon\le\|a\|_{\ell_q(J)}\le1.
 \label{sharp-eq-final-cube-mass}
\end{equation}

Restricting the first estimate in \eqref{sharp-eq-box-variance} to
$J$, and using $N_I/N=1+O_d(\varepsilon)$, gives
\[
 \frac1N\sum_{i\in J}|a_i-\rho_I|^2\le C_d\varepsilon\rho_I^2.
\]
If $b_J:=N^{-1}\sum_{i\in J}a_i$, then Cauchy--Schwarz and \eqref{sharp-eq-box-variance} give
\[
 |b_J-\rho_I|
 \le\left(\frac1N\sum_{i\in J}|a_i-\rho_I|^2\right)^{1/2}
 \le C_d\sqrt\varepsilon\,\rho_I.
\]
Consequently $b_J/\rho_I=1+O_d(\sqrt\varepsilon)$ and
\[
 \frac1N\sum_{i\in J}|a_i/b_J-1|^2\le C_d\varepsilon.
\]
Applying Lemma~\ref{lem:q-moment-variance} again, now on $J$ with $x_i=a_i/b_J$, yields
\[
 \|a\|_{\ell_q(J)}=b_JN^{1/q}(1+O_d(\varepsilon)).
\]
{Equation \eqref{sharp-eq-final-cube-mass} gives}
\begin{equation}
 b_JN^{1/q}=1+O_d(\varepsilon).
 \label{sharp-eq-final-mean}
\end{equation}
Set
\[
 \rho:=N^{-1/q}=m^{-(d+1)/2}.
\]
{By \eqref{sharp-eq-final-mean},}
\[
 |b_J-\rho|\le C_d\varepsilon\rho.
\]
Together with $b_J/\rho_I=1+O_d(\sqrt\varepsilon)$, this gives
\[
 \frac{\rho_I}{\rho}=1+O_d(\sqrt\varepsilon).
\]
Moreover,
\begin{align*}
 \frac1N\sum_{i\in J}|a_i-\rho|^2
 &\le \frac2N\sum_{i\in J}|a_i-b_J|^2+2|b_J-\rho|^2\\
 &\le C_d\varepsilon\rho_I^2+C_d\varepsilon^2\rho^2\\
 &\le C_d\varepsilon\rho^2.
\end{align*}
{Dividing by $\rho^2$ gives \eqref{sharp-eq-variance}.}  Since
\[
 \frac1N\sum_{i\in J}a_i^2
 =\frac1N\sum_{i\in J}|a_i-b_J|^2+b_J^2,
\]
\eqref{sharp-eq-variance} and \eqref{sharp-eq-final-mean} give both estimates in
\eqref{sharp-eq-moments}.

For a selected coordinate $j\in J_s$, the definition of $I_s$ gives
\[
 R_s(j)=A_sp_s(j)\le\frac{3z}{A_s}.
\]
By \eqref{sharp-eq-cardinality-preliminary},
\[
 \frac{z}{A_s^2}=\frac{1+O_d(\sqrt\delta_{\mathrm{Blei}})}{m_s}.
\]
Consequently
\[
 R_s(j)\le \frac{3z}{A_s}
 =\frac{3A_s}{m_s}\bigl(1+O_d(\sqrt\delta_{\mathrm{Blei}})\bigr)
 \le \frac{C_d}{m_s}
 \le \frac{C_d}{m},
\]
where we used $A_s\le1$ and $m_s\ge m$. This is
\eqref{sharp-eq-full-slices}.
Finally, probability $L^q$ norms are bounded by their $L^2$ counterparts because $q<2$. Together with $\rho N^{1/q}=1$, \eqref{sharp-eq-variance} bounds the interior coefficient error by $C_d\sqrt\varepsilon$; \eqref{sharp-eq-tail} bounds the exterior error by $C_d\varepsilon^{1/q}\le C_d\sqrt\varepsilon$. This proves \eqref{sharp-eq-flat-approximation}.
{For $\varepsilon=0$, all error terms vanish.}
\end{proof}

\section{Compatible paths and edge extraction}\label{sec:compatible-paths}

Throughout this section, finite sets carry normalized counting measure.
For $z\in\mathbb K$, set
\[
 \operatorname{ph}(z):=\begin{cases}z/|z|,&z\ne0,\\1,&z=0.\end{cases}
\]
Thus $|\operatorname{ph}(z)|=1$ for every $z$.
A matrix indexed by $A$ and $B$ acts from $L^2(B)$ to $L^2(A)$ by
$(Mv)(a)=|B|^{-1}\sum_{b\in B}M(a,b)v(b)$; its squared
Hilbert--Schmidt norm is $\mathbb E_{a,b}|M(a,b)|^2$.

\begin{lemma}
\label{lem:anchored-compatible-path}
Let $d\geq2$, let $X_1,\ldots,X_d$ be nonempty finite sets, and let
$F:X_1\times\cdots\times X_d\longrightarrow\mathbb K$, where
$\mathbb K\in\{\mathbb R,\mathbb C\}$, satisfy $|F|=1$.
For $2\leq s\leq d-1$ and $j\in X_s$, let $\mathsf M_{s,j}(F)$ be the
conditional matrix of $F$, with row variable $(x_1,\ldots,x_{s-1})$
and column variable $(x_{s+1},\ldots,x_d)$, and put
{\begin{equation}\label{eq:conditional-rank-defect}
 \Delta_s(F)^2=\frac1{|X_s|}\sum_{j\in X_s}
 \operatorname{dist}_{\mathrm{HS}}
 (\mathsf M_{s,j}(F),\{M:\operatorname{rank}M\le1\})^2,
 \qquad \Delta_s=\Delta_s(F).
\end{equation}}
There are unimodular functions $H_r:X_r\times X_{r+1}\to\mathbb K$
such that
\[
 \left\|F-\prod_{r=1}^{d-1}H_r(x_r,x_{r+1})\right\|_{L^2}
 \leq8\sum_{s=2}^{d-1}\Delta_s
 \leq8\sqrt{(d-2)\sum_{s=2}^{d-1}\Delta_s^2}.
\]
The same estimate holds in $L^q$ for $1\leq q\leq2$. For $d=2$
one can take $H_1=F$, with zero error.
\end{lemma}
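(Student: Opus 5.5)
We argue by induction on $d$, peeling off one edge at a time from the right. The base case $d=2$ is trivial: take $H_1=F$. The $L^q$ statement for $1\le q\le2$ follows at the end from $\|\cdot\|_{L^q}\le\|\cdot\|_{L^2}$ on a probability space. The second inequality in the statement is Cauchy--Schwarz over the $d-2$ indices $s$. So the real content is the first inequality, with constant $8$ per slot.

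\emph{Step 1 (unimodular rank-one projection).} If $|z|=1$ and $w\in\mathbb K$, then $|z-\operatorname{ph}(w)|\le2|z-w|$. This holds because $|w-\operatorname{ph}(w)|=\bigl||w|-1\bigr|\le|z-w|$. Now let $M$ be a unimodular kernel and $uv^{\mathsf T}$ a rank-one kernel with $\|M-uv^{\mathsf T}\|_{\mathrm{HS}}=\operatorname{dist}_{\mathrm{HS}}(M,\{\operatorname{rank}\le1\})$. Since $\operatorname{ph}(u(a)v(b))=\operatorname{ph}(u(a))\operatorname{ph}(v(b))$, we get
\[
\|M-\operatorname{ph}(u)\operatorname{ph}(v)^{\mathsf T}\|_{\mathrm{HS}}\le 2\operatorname{dist}_{\mathrm{HS}}(M,\{\operatorname{rank}\le1\}).
\]
The approximant is unimodular and rank one. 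Apply this to $s=d-1$ and each $j\in X_{d-1}$. Here the column variable is $x_d$ alone, so we obtain unimodular $\alpha(x_1,\ldots,x_{d-1})$ and $\beta(x_{d-1},x_d)$ with
\[
\|F-\alpha(x_1,\ldots,x_{d-1})\,\beta(x_{d-1},x_d)\|_{L^2}\le2\Delta_{d-1}.
\]

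\emph{Step 2 (restriction and reduction to $d-1$ variables).} Set $H_{d-1}:=\beta$. For a value $c\in X_d$, define the unimodular function
\[
G_c(x_1,\ldots,x_{d-1}):=F(x_1,\ldots,x_{d-1},c)\,\overline{\beta(x_{d-1},c)}.
\]
For $2\le s\le d-2$, the conditional matrix $\mathsf M_{s,j}(G_c)$ is obtained from $\mathsf M_{s,j}(F)$ in two moves. First restrict to the columns with $x_d=c$. Then multiply each column by the unimodular factor $\overline{\beta(x_{d-1},c)}$, which depends only on the column variable.

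Column multiplication by unimodular scalars preserves rank $\le1$ and HS norms. The columns split according to the value of $x_d$, and rank-one kernels restrict to rank-one kernels. With probability normalization this gives
\[
\mathbb E_c\,\Delta_s(G_c)^2\le\Delta_s(F)^2 .
\]
Likewise,
\[
\mathbb E_c\|G_c-\alpha\|_{L^2}^2=\|F-\alpha\beta\|_{L^2}^2\le4\Delta_{d-1}^2 .
\]
We choose $c$ by averaging so that all of these quantities are simultaneously at most a fixed multiple of their means. Using a weighted Markov or pigeonhole argument on the sum of the normalized ratios, this gives bounds of the form $\Delta_s(G_c)\le C\Delta_s(F)$ and $\|G_c-\alpha\|\le C\Delta_{d-1}$.

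By induction, $G_c$ is within $8\sum_{s=2}^{d-2}\Delta_s(G_c)$ of a unimodular path $\prod_{r\le d-2}H_r$. Then
\[
\|F-\textstyle\prod_{r\le d-1}H_r\|\le\|F-\alpha\beta\|+\|(\alpha-G_c)\beta\|+\|(G_c-\textstyle\prod_{r\le d-2}H_r)\beta\|,
\]
and each term is controlled by the bounds above. The factor $\beta$ is unimodular and does not depend on $x_d$ in the product $\alpha\beta$ beyond its own variables, so multiplying by it does not change $L^2$ norms.

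\emph{Main obstacle.} The delicate point is the selection of $c$ in Step 2. A single $c$ must simultaneously control $\|G_c-\alpha\|$ and every $\Delta_s(G_c)$, and it must do so with constants that do not compound through the induction; otherwise the slotwise constant $8$ would degrade with $d$.

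My first attempt would avoid per-slot Markov losses: choose $c$ minimizing the single weighted quantity
\[
\|G_c-\alpha\|^2/\Delta_{d-1}^2+\sum_s\Delta_s(G_c)^2/\Delta_s(F)^2 .
\]
Its mean is at most $d-1$, but this costs a $d$-dependent factor.

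If that loss is unacceptable, I would instead avoid restriction altogether. In that version, the rank-one factorizations at all slots $s$ are built first, as unimodular $\alpha_s(x_{\le s})\beta_s(x_s,\ldots,x_d)$. The edge $H_{s-1}$ is then read off from the ratio of consecutive factorizations, and the errors are telescoped. Each slot contributes $2\Delta_s$ from Step 1 and a further $2\cdot2\Delta_s$ from re-projecting the ratio to a two-variable unimodular function. This accounting is what I expect yields exactly the constant $8$.
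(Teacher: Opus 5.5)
Your Step~1 and the bookkeeping in Step~2 are correct. For $2\le s\le d-2$ one has $\mathbb E_c\Delta_s(G_c)^2\le\Delta_s(F)^2$, and also $\mathbb E_c\|G_c-\alpha\|_{L^2}^2=\|F-\alpha\beta\|_{L^2}^2\le4\Delta_{d-1}^2$. Your three-term triangle inequality is valid because $\alpha$, $G_c$ and the inductive path do not depend on $x_d$, while $|\beta|=1$. (A minor slip: $\operatorname{ph}(u(a)v(b))=\operatorname{ph}(u(a))\operatorname{ph}(v(b))$ fails when $u(a)v(b)=0$. In that case $|z-\operatorname{ph}(u(a))\operatorname{ph}(v(b))|\le2=2|z-u(a)v(b)|$, so the bound survives.)

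The genuine gap is the one you flag yourself and do not close. Selecting $c$ through the sum of normalized squared ratios only yields bounds such as $\Delta_s(G_c)\le C\sqrt d\,\Delta_s(F)$. These factors multiply through the $d-2$ levels of the induction, so the constant grows roughly like $\sqrt{d!}$ rather than staying at $8$. Such a $C_d$ would suffice for the later use in Lemma~\ref{lem:sqrt-spectral-polar-closure}, but it does not prove the lemma as stated.

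Your fallback is not yet an argument. Reading off $H_{s-1}$ from a ratio of consecutive factorizations means reducing a unimodular function of $x_{\le s}$ to one of $(x_{s-1},x_s)$. That needs exactly the anchoring or conditioning step whose cost you never estimate. The accounting $2\Delta_s+2\cdot2\Delta_s$ is neither derived nor equal to $8\Delta_s$.

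The missing idea is that no Markov-type selection is needed, because the final bound is \emph{linear} in the quantities you must control. With $K$ the inductive constant, choose $c$ minimizing
\[
 \phi(c)=\|G_c-\alpha\|_{L^2}+K\sum_{s=2}^{d-2}\Delta_s(G_c).
\]
Applying Cauchy--Schwarz term by term gives $\min_c\phi\le\mathbb E_c\phi\le2\Delta_{d-1}+K\sum_{s=2}^{d-2}\Delta_s(F)$. Your triangle inequality then gives $\|F-\prod_rH_r\|_{L^2}\le4\Delta_{d-1}+K\sum_{s=2}^{d-2}\Delta_s(F)$, so the induction closes with $K=4$, better than the stated $8$. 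Repaired in this way, your route is a valid alternative.

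The paper does not induct. It draws a single uniform anchor $a\in X_1\times\cdots\times X_d$ and defines every edge at once as a ratio of anchored values of $F$. Each telescoping increment is then a $2\times2$ minor of a slot-$(r+1)$ conditional matrix, which it compares with the vanishing minor of the phase-rounded best rank-one approximant at that slot. The four evaluation points are uniformly distributed, which costs $4\cdot2\Delta_{r+1}$ per slot. The anchor is fixed only once, at the end, because the estimate holds in $L^2(a,x)$ jointly.

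The paper's argument thus avoids sequential selection and handles all slots simultaneously. Your fiber-by-fiber induction makes only two comparisons with a rank-one approximant per slot instead of four, which is why it gives the smaller constant.
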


\begin{proof}
If $d=2$, take $H_1=F$; the estimate is then an identity. Assume $d\ge3$. For each $s,j$, put
\[
 p=(x_1,\ldots,x_{s-1}),\qquad y=(x_{s+1},\ldots,x_d),
\]
choose a best rank-one approximation, and write it as
\[
 W_{s,j}(p,y)=u_{s,j}(p)v_{s,j}(y).
\]
Define
\[
 \Phi_s(p,j):=\operatorname{ph}(u_{s,j}(p)),\qquad
 \Psi_s(j,y):=\operatorname{ph}(v_{s,j}(y)),
\]
and
\[
 \Theta_s(p,j,y):=\Phi_s(p,j)\Psi_s(j,y).
\]
Then $|\Theta_s|=1$ and every conditional matrix of $\Theta_s$ at the $s$th coordinate has rank one.
For $|z|=1$ and $w\neq0$,
\[
 |z-w/|w||\leq |z-w|+\big||w|-1\big|\leq2|z-w|.
\]
If $W_{s,j}(p,y)=0$, then
$|F(p,j,y)-\Theta_s(p,j,y)|\le2=2|F(p,j,y)-W_{s,j}(p,y)|$. Thus
\begin{equation}\label{eq:phase-rounded-slice-error}
 \|F-\Theta_s\|_{L^2}\leq2\Delta_s.
\end{equation}
In the real case these phase choices are simply signs.

Let $a=(a_1,\ldots,a_d)$ be uniformly distributed on $X_1\times\cdots\times X_d$. For
$1\leq r\leq d-2$, define
\[
 H_r^a(x_r,x_{r+1})=
 \frac{F(a_1,\ldots,a_{r-1},x_r,x_{r+1},a_{r+2},\ldots,a_d)}
      {F(a_1,\ldots,a_r,x_{r+1},a_{r+2},\ldots,a_d)},
\]
and set
\[
 H_{d-1}^a(x_{d-1},x_d)
 =F(a_1,\ldots,a_{d-2},x_{d-1},x_d).
\]
All denominators have modulus one. Write
\[
 F_r^a(x)=F(a_1,\ldots,a_r,x_{r+1},\ldots,x_d),
 \qquad F_0^a=F,
\]
{and $E_r^a=F_{r-1}^a-H_r^aF_r^a$ for $1\le r\le d-2$.} Multiplication by the
unimodular denominator in the definition of $H_r^a$ expresses
$E_r^a$ as
\begin{equation}\label{eq:anchored-minor}
 F(p,j,y)F(p',j,b)-F(p,j,b)F(p',j,y),
\end{equation}
where
\[
 \begin{split}
 p&=(a_1,\ldots,a_{r-1},x_r),\qquad
 p'=(a_1,\ldots,a_r),\qquad j=x_{r+1},\\
 y&=(x_{r+2},\ldots,x_d),\qquad
 b=(a_{r+2},\ldots,a_d).
 \end{split}
\]
For the conditional rank-one function
\[
 \Theta_{r+1}(p,j,y)=\Phi_{r+1}(p,j)\Psi_{r+1}(j,y),
\]
the corresponding minor is identically zero, because
\begin{align*}
 &\Theta_{r+1}(p,j,y)\Theta_{r+1}(p',j,b)
 -\Theta_{r+1}(p,j,b)\Theta_{r+1}(p',j,y)\\
 &=\Phi_{r+1}(p,j)\Phi_{r+1}(p',j)
   \bigl[\Psi_{r+1}(j,y)\Psi_{r+1}(j,b)
        -\Psi_{r+1}(j,b)\Psi_{r+1}(j,y)\bigr]\\
 &=0.
\end{align*}
Subtracting this identity from \eqref{eq:anchored-minor} and replacing
one factor at a time gives
\begin{align*}
 |E_r^a|
 &\le |F(p,j,y)-\Theta_{r+1}(p,j,y)|
      +|F(p',j,b)-\Theta_{r+1}(p',j,b)|\\
 &\quad+|F(p,j,b)-\Theta_{r+1}(p,j,b)|
      +|F(p',j,y)-\Theta_{r+1}(p',j,y)|,
\end{align*}
{because all values of $F$ and $\Theta_{r+1}$ have modulus one. Each evaluation
point has the uniform product distribution in $(a,x)$. Minkowski's inequality
and \eqref{eq:phase-rounded-slice-error} give}
\[
 \|E_r^a(x)\|_{L^2(a,x)}
 \leq4\|F-\Theta_{r+1}\|_{L^2}\leq8\Delta_{r+1}.
\]
{The telescoping identity}
\[
 F-\prod_{r=1}^{d-1}H_r^a
 =\sum_{r=1}^{d-2}\left({\prod_{k=1}^{r-1}}H_k^a\right)E_r^a
\]
and $|H_k^a|=1$ bound the left side in $L^2(a,x)$ by
$8{\sum_{s=2}^{d-1}} \Delta_s$. Some anchor attains this bound.
Since the measure is normalized, the $L^q$ estimate follows.
\end{proof}

\begin{lemma}
\label{lem:local-path-perturbation}
Let $F$ be arbitrary on $X_1\times\cdots\times X_d$, and define
$\Delta_s(F)$ as in Lemma~\ref{lem:anchored-compatible-path}. Put
$b=\||F|-1\|_{L^2}$. Then a unimodular path $P$ exists with
\[
 \|F-P\|_{L^2}
 \leq\bigl(1+8(d-2)\bigr)b+8\sum_{s=2}^{d-1}\Delta_s(F).
\]

\end{lemma}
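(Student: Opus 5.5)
The plan is to reduce to Lemma~\ref{lem:anchored-compatible-path} by phase rounding. Define the unimodular function $\tilde F:=\operatorname{ph}(F)$ on $X_1\times\cdots\times X_d$. Then pointwise $|F-\tilde F|=\bigl||F|-1\bigr|$, because $F$ and $\tilde F$ have the same phase wherever $F\ne0$, and $|\tilde F|=1$ where $F=0$. This gives
\[
 \|F-\tilde F\|_{L^2}=b .
\]
Lemma~\ref{lem:anchored-compatible-path}, applied to $\tilde F$, produces a unimodular path $P$ with
\[
 \|\tilde F-P\|_{L^2}\le8\sum_{s=2}^{d-1}\Delta_s(\tilde F).
\]
By the triangle inequality, it then remains to compare $\Delta_s(\tilde F)$ with $\Delta_s(F)$.

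This comparison is a Lipschitz property of the distance to rank-one matrices. For fixed $s$ and $j$, the map $M\mapsto\operatorname{dist}_{\mathrm{HS}}(M,\{\operatorname{rank}\le1\})$ is $1$-Lipschitz in the HS norm, being a distance to a set. Hence
\[
 \operatorname{dist}_{\mathrm{HS}}(\mathsf M_{s,j}(\tilde F),\cdot)\le \operatorname{dist}_{\mathrm{HS}}(\mathsf M_{s,j}(F),\cdot)+\|\mathsf M_{s,j}(F-\tilde F)\|_{\mathrm{HS}} .
\]
Take the $\ell_2$ average over $j\in X_s$ with normalized counting measure, and apply Minkowski's inequality in $\ell_2(X_s)$. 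The probability HS norms of the conditional slices, averaged in square over $j$, reconstruct exactly $\|F-\tilde F\|_{L^2}^2$. This yields
\[
 \Delta_s(\tilde F)\le\Delta_s(F)+b \qquad (2\le s\le d-1).
\]

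Combining the three bounds gives
\[
 \|F-P\|_{L^2}\le b+8\sum_{s=2}^{d-1}\bigl(\Delta_s(F)+b\bigr)=\bigl(1+8(d-2)\bigr)b+8\sum_{s=2}^{d-1}\Delta_s(F),
\]
which is the claim. For $d=2$ the sum is empty, and $P=\tilde F$ works with error $b$.

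The only step needing care is the normalization bookkeeping. We must check that the probability measure on the slice $X_{<s}\times X_{>s}$, combined with the uniform average over $j$, matches the uniform measure on the full product. This holds because the measure is a product, so the averaged squared slice HS norms equal the global $L^2$ norm squared. The phase convention $\operatorname{ph}(0)=1$ is what makes the pointwise identity $|F-\tilde F|=\bigl||F|-1\bigr|$ hold at zeros of $F$ as well.
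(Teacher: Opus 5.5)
Your proposal is correct and follows the paper's proof essentially step for step: the phase rounding $\operatorname{ph}(F)$ with $\|F-\operatorname{ph}(F)\|_{L^2}=b$, the $1$-Lipschitz rank-one distance combined with Minkowski to get $\Delta_s(\operatorname{ph}(F))\le\Delta_s(F)+b$, then Lemma~\ref{lem:anchored-compatible-path} and the triangle inequality. The $d=2$ case is also handled as in the paper, by taking $P=\operatorname{ph}(F)$.
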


\begin{proof}
Define $Z(x):=\operatorname{ph}(F(x))$. Then $|Z|=1$ and
$|F(x)-Z(x)|=\bigl||F(x)|-1\bigr|$ for every $x$, so $\|F-Z\|_{L^2}=b$. For $2\le s\le d-1$ and $j\in X_s$,
write $\mathcal R_1$ for the rank-one cone in the conditional matrix
space. Distance to this cone is $1$-Lipschitz, so
\[
 \operatorname{dist}_{\mathrm{HS}}(\mathsf M_{s,j}(Z),\mathcal R_1)
 \le \operatorname{dist}_{\mathrm{HS}}(\mathsf M_{s,j}(F),\mathcal R_1)
    +\|\mathsf M_{s,j}(Z-F)\|_{\mathrm{HS}}.
\]
Taking the $L^2$ norm in the fixed coordinate $j$ and applying
Minkowski's inequality gives
\[
 \Delta_s(Z)\le \Delta_s(F)
 +\left(\mathbb E_j\|\mathsf M_{s,j}(Z-F)\|_{\mathrm{HS}}^2\right)^{1/2}
 =\Delta_s(F)+\|Z-F\|_{L^2}
 =\Delta_s(F)+b.
\]
Lemma~\ref{lem:anchored-compatible-path} and the triangle inequality
prove the assertion. The case $d=2$
follows by taking $P=Z$.
\end{proof}

{For a matrix $A\in\mathbb K^{m\times m}$, define
\begin{equation}\label{eq:beta-definition}
 \beta(A):=\sup\left\{
 \left|\sum_{a,b=1}^m A(a,b)\langle u_a,v_b\rangle\right|:
 \|u_a\|\le1,\ \|v_b\|\le1\ (a,b\in[m])\right\}.
\end{equation}
The test vectors lie in a common finite-dimensional Hilbert space over
$\mathbb K$. Since their span has dimension at most $2m$, we may use
$\mathbb K^{2m}$.}

\begin{lemma}
\label{lem:robust-edge-extraction}
Let $J_1,\ldots,J_d$ be coordinate subsets of equal cardinality $m$,
identified separately with $[m]$, and let $\rho>0$. Write
$F=T_J/\rho$ for the restriction of the coefficient tensor $T$ to
$J=J_1\times\cdots\times J_d$, divided by $\rho$. Suppose
$P(x)=\prod_{s=1}^{d-1}H_s(x_s,x_{s+1})$ is a unimodular path.
For $1\leq r\leq d-1$ define
{\begin{equation}\label{eq:edge-average-definition}
 \mathsf A_r(a,b)=\frac1{m^{d-2}}
 \sum_{\substack{x\in[m]^d\\x_r=a,\ x_{r+1}=b}}
 F(x)\overline{\prod_{\substack{1\le s\le d-1\\s\ne r}}
 H_s(x_s,x_{s+1})}.
\end{equation}
This is the uniform average over the $d-2$ coordinates other than
$x_r$ and $x_{r+1}$.}
Then
\begin{equation}\label{eq:robust-edge-bound}
 \|T\|_{\mathrm{cb}}
 \geq\rho m^{(d-2)/2}\beta(\mathsf A_r).
\end{equation}
Moreover, for every $1\leq q\leq\infty$,
\begin{equation}\label{eq:averaged-edge-distance}
 \|\mathsf A_r-H_r\|_{L^q([m]^2)}\leq\|F-P\|_{L^q([m]^d)}.
\end{equation}
In particular, if $\|T\|_{\mathrm{cb}}\leq
\rho m^{(d+1)/2}(1+\delta)$, then
$\beta(\mathsf A_r)\leq(1+\delta)m^{3/2}$.
\end{lemma}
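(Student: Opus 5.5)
The plan is to prove \eqref{eq:robust-edge-bound} by an explicit operator test, modeled on the bridge construction in the proof of Lemma~\ref{lem:spectral-input}, with the single bridge replaced by a pair of rank-one maps carrying the test vectors $u_a,v_b$. The estimate \eqref{eq:averaged-edge-distance} is a Jensen-type averaging argument, and the final claim is arithmetic.

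\emph{The test for \eqref{eq:robust-edge-bound}.} Fix $r$ and vectors $\|u_a\|,\|v_b\|\le1$ in $\mathcal K=\mathbb K^{2m}$. I would take
\[
 \mathcal H=\Bigl(\bigoplus_{k=0}^{d-r-1}E_k\Bigr)\oplus\mathcal K\oplus\Bigl(\bigoplus_{t=0}^{r-1}F_t\Bigr),
\]
where $E_k\cong\ell_2([m]^k)$ records a right tail $(x_{d-k+1},\dots,x_d)$ and $F_t\cong\ell_2([m]^t)$ records a left prefix $(x_1,\dots,x_t)$.
\begin{itemize}
\item For slots $s>r+1$, use the isometric shifts $e_y\mapsto e_{(j,y)}$ of Lemma~\ref{lem:spectral-input}. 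Then $X_{x_{r+2}}\cdots X_{x_d}e_\varnothing=e_{(x_{r+2},\dots,x_d)}$.
\item For slot $r+1$ and index $b$, the block $E_{d-r-1}\to\mathcal K$ is
\[
 e_y\mapsto m^{-(d-r-1)/2}\,\overline{\textstyle\prod_{s\ge r+1}H_s(x_s,x_{s+1})}\big|_{x_{r+1}=b,\ (x_{r+2},\dots)=y}\;v_b .
\]
This map sends an orthonormal family of size $m^{d-r-1}$ to unimodular multiples of a single vector of norm at most one. Its norm is therefore at most $\sqrt{m^{d-r-1}}\cdot m^{-(d-r-1)/2}=1$.
\item For slot $r$ and index $a$, the block $\mathcal K\to F_{r-1}$ is
\[
 \xi\mapsto m^{-(r-1)/2}\langle u_a,\xi\rangle\sum_{p\in[m]^{r-1}}\overline{\textstyle\prod_{s<r}H_s(x_s,x_{s+1})}\big|_{(x_1,\dots,x_{r-1})=p,\ x_r=a}\,f_p ,
\]
again a contraction. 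Since $\mathbb K$ may be $\mathbb C$, conjugations have to be placed so that the final pairing is linear in $F$ and carries $\langle u_a,v_b\rangle$.
\item For slots $s<r$, use the coordinate partial isometries $f_{(x_1,\dots,x_t)}\mapsto\mathbf 1_{\{x_t=j\}}f_{(x_1,\dots,x_{t-1})}$.
\end{itemize}
Distinct blocks of each $X^s_j$ have orthogonal sources and targets, so every operator is a contraction, exactly as in Lemma~\ref{lem:spectral-input}. The endpoint cases $r=1$ and $r=d-1$ simply drop the corresponding chain.

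\emph{Evaluating the test.} The pairing is
\[
 \Bigl\langle f_\varnothing,\sum_xT(x)X^1_{x_1}\cdots X^d_{x_d}e_\varnothing\Bigr\rangle
 =m^{-(d-2)/2}\sum_x T(x)\,\overline{\textstyle\prod_{s\ne r}H_s(x_s,x_{s+1})}\,\langle u_{x_r},v_{x_{r+1}}\rangle
 =\rho\, m^{-(d-2)/2}m^{d-2}\sum_{a,b}\mathsf A_r(a,b)\langle u_a,v_b\rangle .
\]
Here I used $T=\rho F$ on $J$ and the definition \eqref{eq:edge-average-definition}. Taking the modulus and then the supremum over $u,v$ gives $\|T\|_{\mathrm{cb}}\ge\rho m^{(d-2)/2}\beta(\mathsf A_r)$. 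I expect the main obstacle to be the normalization bookkeeping. A naive scheme, with one $m^{-1/2}$ factor per slot, loses a full factor of $m$. The point is that the rank-one factor $\langle u_a,\cdot\rangle$ and the vector $v_b$ let slots $r$ and $r+1$ each absorb an entire tail or prefix at cost $m^{-(\text{length})/2}$, while the remaining slots act isometrically.

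\emph{Proof of \eqref{eq:averaged-edge-distance} and the final claim.} Since $P\,\overline{\prod_{s\ne r}H_s}=H_r(x_r,x_{r+1})$ pointwise, with modulus one, we have
\[
 \mathsf A_r(a,b)-H_r(a,b)=\mathbb E\bigl[(F-P)\,\overline{\textstyle\prod_{s\ne r}H_s}\bigm| x_r=a,\ x_{r+1}=b\bigr].
\]
Jensen's inequality (conditional expectation is an $L^q$ contraction) and the unimodularity of the factor give the bound for every $1\le q\le\infty$. Finally, if $\|T\|_{\mathrm{cb}}\le\rho m^{(d+1)/2}(1+\delta)$, dividing \eqref{eq:robust-edge-bound} by $\rho m^{(d-2)/2}$ yields $\beta(\mathsf A_r)\le(1+\delta)m^{3/2}$.
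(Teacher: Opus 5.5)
Your proposal is correct. The second half (conditional expectation against the unimodular factor $\overline{\prod_{s\ne r}H_s}$, then the arithmetic for $\beta$) is exactly the paper's argument.

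For \eqref{eq:robust-edge-bound}, your operator test has a different architecture from the paper's.

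\textbf{The paper's construction.} It uses levels $\mathcal V_s=\ell_2^m$ and transports the path one edge at a time. Each internal slot $2\le s\le d-1$ carries exactly one edge phase and one factor $m^{-1/2}$: $\overline{H_s(i,j)}$ on the right of the selected edge, $\overline{H_{s-1}(k,i)}$ on the left. The end slots are coordinate maps, and contractivity is a one-line Cauchy--Schwarz per edge. This is a transfer-matrix test mirroring the block factorization of Lemma~\ref{lem:unitary-chain}. It already reaches $m^{-(d-2)/2}$ because the factors are counted per internal slot, not per slot. So the loss you anticipated does not arise there either.

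\textbf{Your construction.} You reuse the history-recording spaces $\ell_2([m]^k)$ of Lemma~\ref{lem:spectral-input}. The outer slots act by isometric shifts and coordinate partial isometries. All phases and all normalization go into the two rank-one bridges at slots $r+1$ and $r$. Their norms are $m^{-(d-r-1)/2}\sqrt{m^{d-r-1}}\,\|v_b\|\le1$ and $m^{-(r-1)/2}\sqrt{m^{r-1}}\,\|u_a\|\le1$, and the exponents add to $d-2$.

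\textbf{What each route buys.} Your route never uses the path structure of the phase. It works verbatim for any unimodular weight of the form $\Phi(x_1,\ldots,x_r)\Psi(x_{r+1},\ldots,x_d)$ split at the $r$th edge, at the cost of levels of dimension up to $m^{d-2}$. The paper's route keeps every level $m$-dimensional and stays aligned with the chain factorization used elsewhere in the paper.

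\textbf{One detail to state explicitly.} Set $X^s_j=0$ for $j\notin J_s$, as the paper does. Otherwise the operators are undefined there, and any nonzero choice lets coefficients of $T$ outside $J$ enter the pairing. The identity $\sum_x T(x)\cdots=\rho\sum_{x\in J}F(x)\cdots$ that you use would then fail.
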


\begin{proof}
The test below isolates the \(r\)th edge of the path: the blocks on the two
sides transport the unimodular edge phases, while the block at level \(r\)
inserts the bilinear pairing \(\langle u_a,v_b\rangle\). After summation over
the remaining coordinates, the resulting completely bounded test is exactly
the pairing with \(\mathsf A_r\).
Fix vectors $u_a,v_b$ in a finite-dimensional Hilbert space $\mathcal V$,
all of norm at most one. Set $\mathcal V_0=\mathcal V_d=\mathbb K$, $\mathcal V_r=\mathcal V$,
and $\mathcal V_s=\ell_2^m$ for the remaining internal levels. We construct
contractions $V_i^s:\mathcal V_s\to\mathcal V_{s-1}$.

If $r<d-1$, on the right of the selected edge put
\[
 V_i^d(1)=e_i,
 \qquad
 V_i^s e_j=m^{-1/2}\overline{H_s(i,j)}e_i
 \quad(r+2\leq s\leq d-1),
\]
and
\[
 V_b^{r+1}e_j=m^{-1/2}\overline{H_{r+1}(b,j)}v_b.
\]
If $r=d-1$, replace this entire right-hand construction by
$V_b^d(1)=v_b$.

If $r>1$, on the left put
\[
 V_a^r z=\langle u_a,z\rangle m^{-1/2}
       \sum_{j=1}^m\overline{H_{r-1}(j,a)}e_j,
\]
\[
 V_i^s e_j=\mathbf 1_{\{j=i\}}m^{-1/2}
       \sum_{k=1}^m\overline{H_{s-1}(k,i)}e_k
 \quad(2\leq s\leq r-1),
 \qquad
 V_i^1e_j=\mathbf 1_{\{j=i\}}.
\]
If $r=1$, the left side consists only of
$V_a^1z=\langle u_a,z\rangle$. Each displayed map is a contraction.  For example, if
$r+2\le s\le d-1$, then for $z=\sum_{j=1}^mz_je_j$,
\[
 V_i^sz=m^{-1/2}\sum_{j=1}^mz_j\overline{H_s(i,j)}e_i,
\]
so Cauchy--Schwarz and $|H_s(i,j)|=1$ give
\begin{align*}
 \|V_i^sz\|^2
 &=\frac1m\left|\sum_{j=1}^mz_j\overline{H_s(i,j)}\right|^2\\
 &\le\frac1m\left(\sum_{j=1}^m|z_j|^2\right)
                 \left(\sum_{j=1}^m|H_s(i,j)|^2\right)\\
 &=\sum_{j=1}^m|z_j|^2=\|z\|^2.
\end{align*}
{If $r<d-1$, the same calculation for the map at level $r+1$ gives}
\[
 \|V_b^{r+1}z\|
 \le m^{-1/2}\left({\sum_{j=1}^m|z_j|^2}\right)^{1/2}
                    \left({\sum_{j=1}^m|H_{r+1}(b,j)|^2}\right)^{1/2}
                    \|v_b\|
 \le\|z\|.
\]
{If $r>1$, the vector on the left}
\[
 m^{-1/2}\sum_{j=1}^m\overline{H_{r-1}(j,a)}e_j
\]
has norm one. Therefore
\[
 \|V_a^rz\|=|\langle u_a,z\rangle|
 \le\|u_a\|\,\|z\|\le\|z\|.
\]
For $2\le s\le r-1$,
\[
 \|V_i^sz\|=|z_i|\le\|z\|,
\]
and $V_i^1$ is a coordinate functional.  The endpoint definitions
$r=1$ and $r=d-1$ are contractions because $\|u_a\|,\|v_b\|\le1$.

Put $\mathcal W:=\bigoplus_{s=0}^d\mathcal V_s$. For each $s\in[d]$ and $i\in J_s$, extend $V_i^s:\mathcal V_s\to\mathcal V_{s-1}$ to an operator on $\mathcal W$ by setting it equal to zero on $\mathcal V_s^\perp$; for $i\notin J_s$, set $V_i^s=0$ on $\mathcal W$. These extensions have the same operator norms as the displayed maps and hence are contractions. Thus $(V_i^s)_{s\in[d],\,i\in[n]}$ is an admissible family in \eqref{eq:cb-def-intro}.

{If $r<d-1$, then}
\[
 V_{x_{r+2}}^{r+2}\cdots V_{x_d}^d(1)
 =m^{-(d-r-2)/2}
 \overline{\prod_{s=r+2}^{d-1}H_s(x_s,x_{s+1})}\,e_{x_{r+2}},
\]
and therefore
\[
 V_{x_{r+1}}^{r+1}\cdots V_{x_d}^d(1)
 =m^{-(d-r-1)/2}
 \overline{\prod_{s=r+1}^{d-1}H_s(x_s,x_{s+1})}\,v_{x_{r+1}}.
\]
For $r=d-1$ the same formula reads simply
$V_{x_d}^d(1)=v_{x_d}$. Applying the selected-edge map gives, when
$r>1$,
\[
 V_{x_r}^r\cdots V_{x_d}^d(1)
 =m^{-(d-r)/2}
 \overline{\prod_{s=r+1}^{d-1}H_s(x_s,x_{s+1})}
 \langle u_{x_r},v_{x_{r+1}}\rangle
 \sum_{j=1}^m\overline{H_{r-1}(j,x_r)}e_j.
\]
For $2\le s\le r-1$, the definition of the left-hand block gives
\[
 V_{x_s}^s e_j
 =\mathbf 1_{\{j=x_s\}}m^{-1/2}
   \sum_{k=1}^m\overline{H_{s-1}(k,x_s)}e_k,
\]
while $V_{x_1}^1e_j=\mathbf 1_{\{j=x_1\}}$.
{For $r\ge3$, applying $V_{x_{r-1}}^{r-1}$ to the preceding sum gives
\begin{align*}
 V_{x_{r-1}}^{r-1}V_{x_r}^r\cdots V_{x_d}^d(1)
 &=m^{-(d-r+1)/2}
   \overline{H_{r-1}(x_{r-1},x_r)
   \prod_{s=r+1}^{d-1}H_s(x_s,x_{s+1})}\\
 &\quad{}\times\langle u_{x_r},v_{x_{r+1}}\rangle
   \sum_{k=1}^m\overline{H_{r-2}(k,x_{r-1})}e_k.
\end{align*}
Iterating the same identity down to level $1$ yields}
\[
 V_{x_1}^1\cdots V_{x_d}^d(1)
 =m^{-(d-2)/2}
   \overline{{\prod_{\substack{1\le s\le d-1\\s\ne r}}}H_s(x_s,x_{s+1})}
   \langle u_{x_r},v_{x_{r+1}}\rangle.
\]
{For $r=2$, the same scalar identity follows by applying the
coordinate functional $V_{x_1}^1$ directly to
$V_{x_2}^2\cdots V_{x_d}^d(1)$.}
For $r=1$ the left selection is absent and the same formula follows
directly from $V_{x_1}^1z=\langle u_{x_1},z\rangle$.
{For $d=2$, the product over $s\ne r$ is empty and equals one.}
Coefficients outside $J$ contribute zero. Multiplying by the
coefficients of $T$ and summing over the other $d-2$ coordinates gives
\[
 \rho m^{-(d-2)/2}m^{d-2}
 \sum_{a,b=1}^m \mathsf A_r(a,b)\langle u_a,v_b\rangle.
\]
Its absolute value is bounded by $\|T\|_{\mathrm{cb}}$. Taking
the supremum proves \eqref{eq:robust-edge-bound}.

Finally,
\[
 \mathsf A_r-H_r=\mathbb E\left[
 (F-P)\overline{{\prod_{\substack{1\le s\le d-1\\s\ne r}}}H_s}
 \;\middle|\;x_r,x_{r+1}\right].
\]
The phases preserve absolute values, and conditional expectation
contracts $L^q$. This proves \eqref{eq:averaged-edge-distance}.
\end{proof}

\section{Nuclear and polar estimates}

\subsection{The bilinear nuclear estimate}

{For an $m\times m$ matrix $A$, the normalized probability-space norms and
counting-measure norms are related by
\[
\|A\|_{L^{4/3}([m]^2)}=m^{-3/2}\|A\|_{\ell_{4/3}},
\qquad
\|A\|_{\mathrm{HS},\mathrm{prob}}=m^{-1}\|A\|_F.
\]
}
\begin{lemma}\label{lem:matrix-interpolation}
For every $B\in\mathbb K^{m\times m}$,
\[
 \left({\sum_{i,j=1}^m}|B_{ij}|^{4/3}\right)^{3/2}
       \le \beta(B)\|B\|_{S_1}.
\]
\end{lemma}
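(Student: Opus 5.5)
The plan is to factor $B$ through its singular value decomposition into Gram form. This gives a test in the definition \eqref{eq:beta-definition} of $\beta(B)$, and H\"older's inequality then closes the estimate. Homogeneity is consistent: both sides are quadratic in $B$, since $(\sum|B_{ij}|^{4/3})^{3/2}=\|B\|_{\ell_{4/3}}^2$.

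\emph{Step 1 (Gram factorization).} Write $B=\sum_k\sigma_k x_k y_k^*$ with orthonormal families $(x_k)$, $(y_k)$ in $\mathbb K^m$; over $\mathbb R$ these are real. Define vectors in $\mathbb K^m$ by
\[
\tilde u_i=\bigl(\sqrt{\sigma_k}\,\overline{x_k(i)}\bigr)_k,\qquad \tilde v_j=\bigl(\sqrt{\sigma_k}\,\overline{y_k(j)}\bigr)_k .
\]
With the inner product linear in the second variable, $\langle\tilde u_i,\tilde v_j\rangle=\sum_k\sigma_k x_k(i)\overline{y_k(j)}=B_{ij}$. Put $\alpha_i=\|\tilde u_i\|^2$ and $\gamma_j=\|\tilde v_j\|^2$. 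Orthonormality of columns gives $\sum_i\alpha_i=\sum_j\gamma_j=\sum_k\sigma_k=\|B\|_{S_1}$. By Cauchy--Schwarz, $|B_{ij}|\le\sqrt{\alpha_i\gamma_j}$, so $B_{ij}=0$ whenever $\alpha_i\gamma_j=0$.

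\emph{Step 2 (testing $\beta$).} I will use the conjugated factorization, which represents $\overline{B_{ij}}$ as a Gram entry. Take $u_i=\overline{\tilde u_i}/\sqrt{\alpha_i}$ and $v_j=\overline{\tilde v_j}/\sqrt{\gamma_j}$ (entrywise conjugates), and set them to zero when $\alpha_i=0$ or $\gamma_j=0$. These have norm at most one, and $\langle u_i,v_j\rangle=\overline{B_{ij}}/\sqrt{\alpha_i\gamma_j}$. Hence
\[
S:=\sum_{\alpha_i\gamma_j>0}\frac{|B_{ij}|^2}{\sqrt{\alpha_i\gamma_j}}=\sum_{i,j}B_{ij}\langle u_i,v_j\rangle\le\beta(B).
\]

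\emph{Step 3 (H\"older).} On the support of $B$, write $|B_{ij}|^{4/3}=\bigl(|B_{ij}|^2/\sqrt{\alpha_i\gamma_j}\bigr)^{2/3}(\alpha_i\gamma_j)^{1/3}$. Apply H\"older with exponents $3/2$ and $3$:
\[
\sum|B_{ij}|^{4/3}\le S^{2/3}\Bigl(\sum_{i,j}\alpha_i\gamma_j\Bigr)^{1/3}=S^{2/3}\|B\|_{S_1}^{2/3}\le\beta(B)^{2/3}\|B\|_{S_1}^{2/3}.
\]
Raising both sides to the power $3/2$ gives the claim.

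The only delicate point is Step 2, specifically getting the conjugations right so that the test pairing produces $|B_{ij}|^2$ rather than $B_{ij}^2$. Over $\mathbb R$ all conjugations are trivial and the vectors are real, so the argument applies over both fields.
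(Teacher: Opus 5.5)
Your argument is correct, and it reaches the inequality by a genuinely different route from the paper's.

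The paper extracts the diagonal weights from $\beta(B)$. It rewrites $\beta(B)$ as a semidefinite program over Gram matrices and invokes strong duality. This gives $a_i,b_j\ge0$ with $\begin{pmatrix}D_a&B\\B^*&D_b\end{pmatrix}\succeq0$ and $\tfrac12(\sum_ia_i+\sum_jb_j)$ arbitrarily close to $\beta(B)$. The Schur complement then shows that $D_a^{-1/2}BD_b^{-1/2}$ is a contraction, so Schatten duality yields $\sum_{i,j}|B_{ij}|^2/\sqrt{a_ib_j}\le\|B\|_{S_1}$. Zero weights are handled by perturbing to $a_i+t,b_j+t$ and letting $t\downarrow0$.

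You swap the roles of the two norms. Your weights $\alpha_i,\gamma_j$ are the squared row norms of the SVD factors $X\Sigma^{1/2}$ and $Y\Sigma^{1/2}$. These explicitly witness the trace norm through $\sum_i\alpha_i=\sum_j\gamma_j=\|B\|_{S_1}$. Your weighted sum $S$ is then bounded by $\beta(B)$ simply by inserting normalized Gram vectors into the primal definition \eqref{eq:beta-definition}, which is a supremum.

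The H\"older step with exponents $3/2$ and $3$ is the same in both proofs. The one difference is that you get $(\sum_i\alpha_i)(\sum_j\gamma_j)=\|B\|_{S_1}^2$ exactly and need no arithmetic--geometric mean step.

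Your route is fully explicit and self-contained. It needs no strong duality, no strict-feasibility check, no Schur complement and no limiting argument. It also works verbatim over $\mathbb R$, since the real SVD has real singular vectors. The paper's route produces the dual factorization description of $\beta$ as a by-product, but that description is not used anywhere else in the paper. For this lemma, your proof is the more economical one.
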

\begin{proof}
We obtain the estimate by expressing \(\beta(B)\) through a Gram-matrix
optimization. Its dual formulation provides diagonal weights for \(B\), which
can then be combined with Schatten duality and Hölder's inequality.
{Write $G\succeq0$ when the Hermitian matrix $G$ is positive
semidefinite, that is, $z^*Gz\ge0$ for every column vector $z$.
Conjugating both families of vectors in \eqref{eq:beta-definition} gives
$\beta(B)=\beta(\overline B)$. Define
\[
 S_B=\frac12\begin{pmatrix}0&B\\B^*&0\end{pmatrix}.
\]
Let $w=(u_1,\ldots,u_m,v_1,\ldots,v_m)$, and let
$G_{\alpha\beta}=\langle w_\alpha,w_\beta\rangle$ be its Gram matrix. Then
\[
 \Tr(S_BG)=\operatorname{Re}\sum_{i,j=1}^m
 \overline{B_{ij}}\langle u_i,v_j\rangle.
\]
A common phase on one vector family makes the real-part and
absolute-value maxima equal. Thus the Gram formulation is
\[
 \beta(B)=\max\{\Tr(S_BG):G\succeq0,\ G_{\alpha\alpha}\le1\ (1\le\alpha\le2m)\}.
\]
Its semidefinite dual, after multiplying the constraint by two and
conjugating by $\operatorname{diag}(I_m,-I_m)$, is
\begin{equation}\label{eq:beta-dual}
 \begin{split}
 \beta(B)=\inf\biggl\{\frac12\biggl(\sum_{i=1}^m a_i+\sum_{j=1}^m b_j\biggr):\ &
 a_i,b_j\ge0,\\[-2pt]
 &\begin{pmatrix}D_a&B\\B^*&D_b\end{pmatrix}\succeq0\biggr\},
 \end{split}
\end{equation}
where $D_a=\operatorname{diag}(a_1,\ldots,a_m)$ and
$D_b=\operatorname{diag}(b_1,\ldots,b_m)$.
Strict feasibility holds for the primal at $G=\tfrac12 I_{2m}$ and
for the dual at $a_i=b_j=t>\|B\|_{\mathrm{op}}$.
Strong duality applies over both fields.}

First consider a feasible pair with every $a_i,b_j>0$. The Schur
complement criterion shows that
$K=D_a^{-1/2}BD_b^{-1/2}$ is a contraction. Schatten duality yields
\[
 \|B\|_{S_1}\ge \operatorname{Re}\operatorname{Tr}(K^*B)
       ={\sum_{i,j=1}^m}\frac{|B_{ij}|^2}{\sqrt{a_i b_j}}.
\]
H\"older's inequality with exponents $3/2$ and $3$ gives
\[
 {\sum_{i,j=1}^m}|B_{ij}|^{4/3}
 \le\left({\sum_{i,j=1}^m}\frac{|B_{ij}|^2}{\sqrt{a_i b_j}}\right)^{2/3}
       \left({\sum_{i,j=1}^m}a_i b_j\right)^{1/3}.
\]
Raising to the power $3/2$ and applying the arithmetic--geometric mean
inequality yields
\[
 \left({\sum_{i,j=1}^m}|B_{ij}|^{4/3}\right)^{3/2}
 \le\|B\|_{S_1}\sqrt{\Bigl({\sum_{i=1}^m} a_i\Bigr)\Bigl({\sum_{j=1}^m} b_j\Bigr)}
 \le\frac12\|B\|_{S_1}\left({\sum_{i=1}^m} a_i+{\sum_{j=1}^m} b_j\right).
\]
For a feasible pair with zero entries, $a_i+t,b_j+t$ remains feasible
and is strictly positive. Let $t\downarrow0$ and take the dual infimum
to conclude.
\end{proof}

\subsection{\texorpdfstring{{Approximation by unitary matrices}}{Approximation by unitary matrices}}
\begin{lemma}\label{lem:edge-replacement}
Let $J_1,\ldots,J_d$ have common cardinality $m$.  Let
$K_t:J_t\times J_{t+1}\to\mathbb K$ satisfy
$K_tK_t^*=mI_m$, and let $H_t:J_t\times J_{t+1}\to\mathbb K$ be
unimodular. {Identify each $J_t$ with $[m]$.} Fix $1\le r\le d-1$ and put
\[
 D_r(i_r,i_{r+1})=H_r(i_r,i_{r+1})-K_r(i_r,i_{r+1}).
\]
Then, with normalized counting measure on $J=J_1\times\cdots\times J_d$,
\[
 \left\|
 \left({\prod_{t=1}^{r-1}}K_t(i_t,i_{t+1})\right)
 D_r(i_r,i_{r+1})
 \left({\prod_{t=r+1}^{d-1}}H_t(i_t,i_{t+1})\right)
 \right\|_{L^2(J)}
 =\frac{\|D_r\|_F}{m}.
\]
\end{lemma}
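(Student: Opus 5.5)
The plan is a direct computation: expand the squared $L^2(J)$ norm and sum out the coordinates one at a time, using row orthogonality of the $K_t$ on the left of the defect edge and unimodularity of the $H_t$ on the right. Write the function inside the norm as
\[
 E(i)=\left(\prod_{t=1}^{r-1}K_t(i_t,i_{t+1})\right)D_r(i_r,i_{r+1})\left(\prod_{t=r+1}^{d-1}H_t(i_t,i_{t+1})\right).
\]
Since the measure on $J$ is normalized counting measure, $\|E\|_{L^2(J)}^2=m^{-d}\sum_{i\in[m]^d}|E(i)|^2$, and $|E(i)|^2$ factors as a product of squared moduli of the individual edge entries.

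First I sum out the right-hand variables. Summing over $i_d,i_{d-1},\ldots,i_{r+2}$ in that order, each factor satisfies $|H_t(i_t,i_{t+1})|^2=1$, so each of these sums contributes a factor $m$. This gives $m^{d-r-1}$ times the sum over $(i_1,\ldots,i_{r+1})$ of $\prod_{t<r}|K_t(i_t,i_{t+1})|^2\,|D_r(i_r,i_{r+1})|^2$. Next, summing over $i_{r+1}$ gives the squared Euclidean norm of row $i_r$ of $D_r$; call it $\delta(i_r)$.

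Then I sum out the left-hand variables. The condition $K_tK_t^*=mI_m$ says exactly that every row of $K_t$ has squared norm $m$. The variable $i_1$ occurs only in $|K_1(i_1,i_2)|^2$, and $\sum_{i_1}|K_1(i_1,i_2)|^2$ is the squared norm of a \emph{column} of $K_1$. For a square matrix, $KK^*=mI$ implies $K^*K=mI$, so columns also have squared norm $m$. Summing $i_1,\ldots,i_{r-1}$ successively therefore gives a factor $m^{r-1}$. What remains is $\sum_{i_r}\delta(i_r)=\|D_r\|_F^2$.

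Collecting powers of $m$:
\[
 \|E\|_{L^2(J)}^2=m^{-d}\,m^{d-r-1}\,m^{r-1}\,\|D_r\|_F^2=m^{-2}\|D_r\|_F^2,
\]
which is the claim. The endpoint cases $r=1$ (no left factors) and $r=d-1$ (no right factors) fit the same count because empty products are one.

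The only subtle point is the orientation. On the left the summed variable is the row index $i_t$, so column norms are needed, and passing from $KK^*=mI$ to $K^*K=mI$ uses squareness. On the right the summed variable is the column index, and there unimodularity alone suffices.
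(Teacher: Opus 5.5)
Your proof is correct and matches the paper's argument. Both sum out $i_{r+2},\ldots,i_d$ using unimodularity of the $H_t$ to obtain $m^{d-r-1}$, then sum $i_1,\ldots,i_{r-1}$ successively using unit column norms of the $K_t$, which follow from $K_t^*K_t=mI_m$ because $K_t$ is square, to obtain $m^{r-1}$. One small wording slip: your opening sentence says ``row orthogonality'' is used, but your final paragraph correctly identifies that only the column norms are needed.
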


\begin{proof}
The unimodular factors to the right of $D_r$ disappear under
absolute values. The squared $L^2$ norm is therefore
\[
 \frac1{m^d}{\sum_{i_1,\ldots,i_d=1}^m}
 \left|{\prod_{t=1}^{r-1}}K_t(i_t,i_{t+1})\right|^2
 |D_r(i_r,i_{r+1})|^2.
\]
The variables $i_{r+2},\ldots,i_d$ do not occur in the summand and
contribute the factor $m^{d-r-1}$.  For fixed $i_r$, summing first in
$i_1$ and using {$K_1^*K_1=mI_m$, which follows from $K_1K_1^*=mI_m$ because $K_1$ is square,} gives
\[
 {\sum_{i_1=1}^m}|K_1(i_1,i_2)|^2=m.
\]
More generally, for $1\le \ell\le r-1$, summing in
$i_1,\ldots,i_\ell$ gives
\[
 {\sum_{i_1,\ldots,i_\ell=1}^m}
 \prod_{t=1}^{\ell}|K_t(i_t,i_{t+1})|^2=m^\ell,
\]
which follows inductively from
\[
 \sum_{i_\ell=1}^m|K_\ell(i_\ell,i_{\ell+1})|^2=m.
\]
Taking $\ell=r-1$ gives the factor $m^{r-1}$. Therefore the squared norm is
\begin{align*}
 \frac{m^{d-r-1}m^{r-1}}{m^d}
 {\sum_{i_r,i_{r+1}=1}^m}|D_r(i_r,i_{r+1})|^2
 &=\frac1{m^2}\|D_r\|_F^2.
\end{align*}
Taking square roots proves the identity. At $r=1$ or $r=d-1$, the corresponding sequence of summations is absent, and its factor is $m^0=1$.
\end{proof}

\begin{lemma}
\label{lem:sqrt-spectral-polar-closure}
Fix $d\ge2$, put $q=2d/(d+1)$, and suppose that
$\|T\|_{\mathrm{cb}}=1$ and $\|T\|_q\ge1-\varepsilon$,
{where $0\le\varepsilon\le\varepsilon_d$ and $\varepsilon_d>0$
is sufficiently small, depending only on $d$.}
Let $J=\prod_{s=1}^dJ_s$, where $|J_s|=m$, and define
\[
 \rho=m^{-(d+1)/2},\qquad F=\rho^{-1}T|_J.
\]
All expectations on $J$ and its coordinate products use uniform
probability measure. Suppose, with constants depending only on $d$,
\begin{align}
 \|T-T\mathbf1_J\|_q&\le C_d\varepsilon^{1/q},
 &\mathbb E(|F|-1)^2&\le C_d\varepsilon,\label{eq:sqrt-cube-input-one}\\
 |\mathbb E|F|^2-1|&\le C_d\varepsilon,
 &\max_{j\in J_s}h_{s,j}&\le C_d/m
 \quad(1\le s\le d),\label{eq:sqrt-cube-input-two}
\end{align}
where $h_{s,j}$ is the Frobenius norm of the full counting-measure slice of $T$ at $i_s=j$.
Then there exist unimodular edge matrices $H_r$ and scaled unitaries
$K_r=\sqrt m\,U_r$, $1\le r\le d-1$, such that the tensors
\[
 P(e_{i_1},\ldots,e_{i_d})=\rho\prod_{r=1}^{d-1}H_r(i_r,i_{r+1}),\qquad
 Q(e_{i_1},\ldots,e_{i_d})=\rho\prod_{r=1}^{d-1}K_r(i_r,i_{r+1})
\]
on $J$, extended by zero, satisfy
\begin{align}
 \|T-P\|_q+\|T-Q\|_q&\le C_d\sqrt\varepsilon,\label{eq:sqrt-chain-conclusion}\\
 \max_{1\le r\le d-1}\frac{\|H_r-K_r\|_F}{m}&\le C_d\sqrt\varepsilon,\qquad
 \|P\|_q=\|Q\|_{\mathrm{cb}}=1.\label{eq:sqrt-edge-conclusion}
\end{align}
The assertions hold over both fields.
\end{lemma}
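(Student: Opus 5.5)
The plan has three stages: make the slices of $F$ nearly rank one, build a flat unimodular path $P$ from them, and then replace each edge of $P$ by a scaled unitary through its polar decomposition. The difficulty is the last stage, which must not lose the $\sqrt\varepsilon$ rate.

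\emph{Rank-one slices on the cube.} By the Corollary following Lemma~\ref{lem:spectral-input}, with $\|T\|_{\mathrm{cb}}=1$,
\[
\sum_{j\in J_s}\frac{\operatorname{tail}_2(M_{s,j})^2}{h_{s,j}}\le 2d\varepsilon .
\]
Combined with $h_{s,j}\le C_d/m$ from \eqref{eq:sqrt-cube-input-two}, this gives $\sum_{j\in J_s}\operatorname{tail}_2(M_{s,j})^2\le C_d\varepsilon/m$. Passing to the submatrix $M^J_{s,j}$ does not increase the distance to rank-one matrices. By the scaling identity of \S\ref{subsec:flat-scale-bookkeeping}, each conditional slice of $F$ satisfies $\|\mathsf M_{s,j}(F)\|_{\mathrm{HS}}=m\|M^J_{s,j}\|_F$. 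Hence
\[
\Delta_s(F)^2=\frac1m\sum_{j\in J_s} m^2\,\operatorname{tail}_2(M^J_{s,j})^2\le C_d\varepsilon .
\]

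\emph{The flat path $P$.} Since $\||F|-1\|_{L^2}\le C_d\sqrt\varepsilon$ by \eqref{eq:sqrt-cube-input-one}, Lemma~\ref{lem:local-path-perturbation} gives a unimodular path $P=\prod_r H_r$ with $\|F-P\|_{L^2}\le C_d\sqrt\varepsilon$. Because $\rho N^{1/q}=1$, we have $\|T\mathbf 1_J-\rho P\|_{\ell_q}=\|F-P\|_{L^q}\le\|F-P\|_{L^2}$. The exterior part costs at most $C_d\varepsilon^{1/q}\le C_d\sqrt\varepsilon$, and $\|\rho P\|_q=1$ is immediate. We also record the second-order identity
\[
\|F-P\|_{L^2}^2=\mathbb E|F|^2-2\operatorname{Re}\mathbb E[\overline PF]+1 .
\]
With $|\mathbb E|F|^2-1|\le C_d\varepsilon$, this yields $x:=\operatorname{Re}\mathbb E[\overline P(F-P)]\ge -C_d\varepsilon$. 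Here $x$ is a mean over the full cube, so it makes sense for every edge at once.

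\emph{Unitary edges (main obstacle).} For each $r$, form $\mathsf A_r$ as in Lemma~\ref{lem:robust-edge-extraction}. Since $\|T\|_{\mathrm{cb}}=1=\rho m^{(d+1)/2}$, that lemma gives $\beta(\mathsf A_r)\le m^{3/2}$ and $\|\mathsf A_r-H_r\|_{L^2}\le C_d\sqrt\varepsilon$. Write $\mathsf A_r=H_r+E$. The conditional expectation also gives
\[
\mathbb E_{a,b}\bigl[\overline{H_r}E\bigr]=\mathbb E\bigl[\overline P(F-P)\bigr],
\]
so $\operatorname{Re}\mathbb E[\overline{H_r}E]=x$. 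A naive first-order use of Lemma~\ref{lem:matrix-interpolation} only gives $\|\mathsf A_r\|_{S_1}\ge m^{3/2}(1-C\sqrt\varepsilon)$, which leads to $\varepsilon^{1/4}$. The fix is to expand to second order, using $|1+w|^{4/3}=1+\tfrac43\operatorname{Re}w+O(|w|^2)$:
\[
\|\mathsf A_r\|_{\ell_{4/3}}^2\ge m^3(1+2x-C\varepsilon),\qquad \|\mathsf A_r\|_F^2=m^2(1+2x+O(\varepsilon)).
\]
Lemma~\ref{lem:matrix-interpolation} then gives $\|\mathsf A_r\|_{S_1}\ge m^{3/2}(1+2x-C\varepsilon)$. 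Let $U_r$ be the unitary (orthogonal over $\mathbb R$) polar factor of $\mathsf A_r$. Then
\[
\|\mathsf A_r-\sqrt m\,U_r\|_F^2=\|\mathsf A_r\|_F^2-2\sqrt m\|\mathsf A_r\|_{S_1}+m^2\le m^2(-2x+C\varepsilon)\le C_dm^2\varepsilon .
\]
The triangle inequality with $\|\mathsf A_r-H_r\|_F\le C_dm\sqrt\varepsilon$ gives \eqref{eq:sqrt-edge-conclusion}, with $K_r=\sqrt m\,U_r$.

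\emph{The chain $Q$.} Telescope
\[
P-\prod_r K_r=\sum_r\Bigl(\prod_{t<r}K_t\Bigr)(H_r-K_r)\Bigl(\prod_{t>r}H_t\Bigr).
\]
By Lemma~\ref{lem:edge-replacement}, each term has $L^2(J)$ norm $\|H_r-K_r\|_F/m\le C_d\sqrt\varepsilon$. Passing from $L^2$ to $L^q$ and rescaling by $\rho$ as before gives $\|T-Q\|_q\le C_d\sqrt\varepsilon$. Finally $\rho m^{(d-1)/2}=m^{-1}$, so $Q=m^{-1}\prod_r U_r$ is a normalized unitary chain, and Lemma~\ref{lem:unitary-chain} gives $\|Q\|_{\mathrm{cb}}=1$.
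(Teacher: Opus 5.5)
Your proposal is correct and follows essentially the same route as the paper. The steps match: the bound $\Delta_s(F)^2\le C_d\varepsilon$ from the slice gap $\Gamma_s-A_s$ together with $h_{s,j}\le C_d/m$, then Lemma~\ref{lem:local-path-perturbation} for the path, then the identity $\|F-P\|_{L^2}^2=\mathbb E|F|^2-2\operatorname{Re}\mathbb E[\overline PF]+1$ to obtain an $O(\varepsilon)$ rather than $O(\sqrt\varepsilon)$ overlap, then Lemma~\ref{lem:matrix-interpolation}, the polar factor, and Lemma~\ref{lem:edge-replacement}. The only difference is cosmetic: the paper gets $\|\mathsf A_r\|_{L^{4/3}}\ge1-C_d\varepsilon$ directly from H\"older, $\operatorname{Re}\mathbb E(\mathsf A_r\overline{H_r})\le\|\mathsf A_r\|_{L^{4/3}}\|H_r\|_{L^4}$, and bounds $\mathbb E|\mathsf A_r|^2\le\mathbb E|F|^2$ by Jensen, whereas you expand $|1+w|^{4/3}$; your route is also valid, because the lower bound you need, $|1+w|^{4/3}\ge1+\tfrac43\operatorname{Re}w$, holds globally by convexity even though $E$ is only small in mean square.
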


\begin{proof}
For the full conditional slices write
\[
 A_s={\sum_{j=1}^n h_{s,j}},\qquad
 {\Gamma_s}={\sum_{\substack{1\le j\le n\\h_{s,j}>0}}}\frac{h_{s,j}^2}{\|M_{s,j}\|_{\mathrm{op}}}.
\]
{Lemmas~\ref{lem:spectral-input} and \ref{lem:blei} give}
\[
 A_s\le {\Gamma_s}\le1,\qquad
 \prod_{s=1}^d A_s\ge\|T\|_q^d\ge(1-\varepsilon)^d.
\]
Each factor is at most one, so $A_s\ge(1-\varepsilon)^d$ for every
$s$. Hence
\begin{equation}\label{eq:sqrt-spectral-gap}
 {\Gamma_s}-A_s\le d\varepsilon.
\end{equation}
For a nonzero matrix with Frobenius norm $h$ and largest singular value
$\sigma$, its squared distance to the rank-one cone obeys
\[
 \operatorname{tail}_2(M)^2=h^2-\sigma^2
 \le2h(h-\sigma)
 \le2h\left(\frac{h^2}{\sigma}-h\right).
\]
Restrict $M$ to selected rows and columns to obtain $M^J$.
If $\operatorname{rank}R\le1$, then $\operatorname{rank}R^J\le1$ and
\[
 \|M^J-R^J\|_F\le\|M-R\|_F.
\]
Taking the infimum first over full rank-one matrices $R$ and then over
all rank-one matrices on the restricted sets gives
\[
 \operatorname{dist}_F(M^J,\{S:\operatorname{rank}S\le1\})
 \le
 \operatorname{dist}_F(M,\{R:\operatorname{rank}R\le1\}).
\]

{Let $M^{J}_{s,j}$ denote the restriction of the full}
counting-measure slice $M_{s,j}$ to the coordinates belonging to $J$.
The conditional matrix of $F=\rho^{-1}T|_J$, regarded as an operator
between probability $L^2$ spaces on sets of cardinalities
$m^{s-1}$ and $m^{d-s}$, has Hilbert--Schmidt norm
\[
 \left(\frac{1}{m^{d-1}}
 {\sum_{\substack{i\in J\\i_s=j}}|F(i)|^2}\right)^{1/2}
 =\frac{\rho^{-1}}{m^{(d-1)/2}}\|M^{J}_{s,j}\|_F
 =m\|M^{J}_{s,j}\|_F,
\]
because $\rho=m^{-(d+1)/2}$. The distance to the rank-one cone scales
by the same factor $m$. Averaging its square over $j\in J_s$ gives,
for $s\in\{2,\ldots,d-1\}$,
\begin{align*}
 \Delta_s(F)^2
 &=\frac1m\sum_{j\in J_s}
    \bigl(m\,\operatorname{tail}_2(M^{J}_{s,j})\bigr)^2\\
 &\le m\sum_{j\in J_s}\operatorname{tail}_2(M_{s,j})^2\\
 &\le2m\sum_{j\in J_s}h_{s,j}
       \left(\frac{h_{s,j}^2}{\|M_{s,j}\|_{\mathrm{op}}}-h_{s,j}\right)\\
 &\le2m\left(\max_{j\in J_s}h_{s,j}\right)
       \sum_{j\in J_s}
       \left(\frac{h_{s,j}^2}{\|M_{s,j}\|_{\mathrm{op}}}-h_{s,j}\right)\\
 &\le2m\left(\max_{j\in J_s}h_{s,j}\right)({\Gamma_s}-A_s)
 \le C_d\varepsilon.
\end{align*}
Terms with $h_{s,j}=0$ are interpreted as zero. In the penultimate
line, nonnegativity permits enlarging the sum from $J_s$ to $[n]$.
Lemma~\ref{lem:local-path-perturbation}, together with
{\eqref{eq:sqrt-cube-input-one} and $\Delta_s(F)^2\le C_d\varepsilon$, gives unimodular $H_r$ such that}
\[
 Z=\prod_{r=1}^{d-1}H_r,\qquad
 \|F-Z\|_{L^2(J)}\le C_d\sqrt\varepsilon.
\]
For $d=2$, take $Z=\operatorname{ph}(F)$ pointwise.
Define $P=\rho Z$ on $J$ and $P=0$ outside $J$. Since $|Z|=1$ and $\rho m^{d/q}=1$,
\[
 \|P\|_q=\rho m^{d/q}=1.
\]
On the box, counting and probability norms are related by
\[
 \|\rho(F-Z)\|_{\ell_q(J)}
 =\rho m^{d/q}\|F-Z\|_{L^q(J)}.
\]
Since $q<2$ and the measure on $J$ is a probability measure,
$\|F-Z\|_{L^q(J)}\le\|F-Z\|_{L^2(J)}$.  Moreover
\[
 \rho m^{d/q}=m^{-(d+1)/2}m^{(d+1)/2}=1.
\]
Outside $J$ one has $P=0$, so
\[
 \|T-P\|_q
 \le \|T-T\mathbf1_J\|_q
      +\|T\mathbf1_J-P\|_q
 \le C_d\varepsilon^{1/q}+C_d\sqrt\varepsilon
 \le C_d\sqrt\varepsilon,
\]
because $0\le\varepsilon\le1$ and $1/q>1/2$.

{For each $r$, define}
\[
 \mathsf A_r(a,b)=\mathbb E\left[
 F(i)\overline{{\prod_{\substack{1\le s\le d-1\\s\ne r}}}H_s(i_s,i_{s+1})}
 \mid i_r=a,\ i_{r+1}=b\right].
\]
Lemma~\ref{lem:robust-edge-extraction} gives
$\beta(\mathsf A_r)\le m^{3/2}$, and conditional Jensen gives
\begin{equation}\label{eq:edge-second-moment}
 \mathbb E|\mathsf A_r|^2\le\mathbb E|F|^2\le1+C_d\varepsilon,
 \qquad
 \|\mathsf A_r-H_r\|_{L^2}\le\|F-Z\|_{L^2}
 \le C_d\sqrt\varepsilon.
\end{equation}
{Since $|Z|=1$,}
\begin{align}
 \operatorname{Re}\mathbb E(\mathsf A_r\overline{H_r})
 &=\operatorname{Re}\mathbb E(F\overline Z)\\
 &=\frac{\mathbb E|F|^2+1-\mathbb E|F-Z|^2}{2}
 \ge1-C_d\varepsilon.\label{eq:edge-overlap}
\end{align}
Since $|H_r|=1$, we have $\|H_r\|_{L^4}=1$.
H\"older's inequality with exponents $4/3$ and $4$ gives
\[
 1-C_d\varepsilon
 \overset{\eqref{eq:edge-overlap}}{\le} \operatorname{Re}\mathbb E(\mathsf A_r\overline{H_r})
 \le \mathbb E|\mathsf A_rH_r|
 \le \|\mathsf A_r\|_{L^{4/3}}\|H_r\|_{L^4}
 =\|\mathsf A_r\|_{L^{4/3}}.
\]
Thus $\|\mathsf A_r\|_{L^{4/3}}\ge1-C_d\varepsilon$. Since $\mathsf A_r$ has
$m^2$ entries and $L^{4/3}$ uses uniform probability measure,
\[
 \|\mathsf A_r\|_{\ell_{4/3}}
 =\left(m^2\mathbb E|\mathsf A_r|^{4/3}\right)^{3/4}
 =m^{3/2}\|\mathsf A_r\|_{L^{4/3}}.
\]
Lemma~\ref{lem:matrix-interpolation} and $\beta(\mathsf A_r)\le m^{3/2}$ therefore give
\begin{equation}\label{eq:edge-nuclear-lower}
 \|\mathsf A_r\|_{S_1}
 \ge\frac{\|\mathsf A_r\|_{\ell_{4/3}}^2}{\beta(\mathsf A_r)}
 \ge\frac{m^3(1-C_d\varepsilon)^2}{m^{3/2}}
 \ge m^{3/2}(1-C_d\varepsilon).
\end{equation}
Thus \(\mathsf A_r\) has nearly maximal trace norm at the scale dictated by
its Frobenius energy. Its polar unitary is therefore the natural scaled
unitary against which to measure the residual error.
Let $\mathsf A_r=V_r|\mathsf A_r|$ be a polar decomposition and extend the partial isometry $V_r$ to a unitary $U_r$ on $\mathbb K^m$. Put $K_r=\sqrt m\,U_r$. Then
$\|K_r\|_F^2=m^2$ and
$\operatorname{Re}\operatorname{Tr}(K_r^*\mathsf A_r)
=\sqrt m\,\|\mathsf A_r\|_{S_1}$.  {By \eqref{eq:edge-second-moment} and \eqref{eq:edge-nuclear-lower},}
\begin{align}
 \frac{\|\mathsf A_r-K_r\|_F^2}{m^2}
 &=\frac{\|\mathsf A_r\|_F^2}{m^2}+1
   -\frac{2\|\mathsf A_r\|_{S_1}}{m^{3/2}}\\
 &=\mathbb E|\mathsf A_r|^2+1
   -\frac{2\|\mathsf A_r\|_{S_1}}{m^{3/2}}\\
 &\le C_d\varepsilon.\label{eq:polar-distance}
\end{align}
{Moreover,}
\[
 \|\mathsf A_r-H_r\|_{L^2([m]^2)}
   =\frac{\|\mathsf A_r-H_r\|_F}{m}\le C_d\sqrt\varepsilon,
\]
and \eqref{eq:polar-distance} gives
\[
 \frac{\|\mathsf A_r-K_r\|_F}{m}\le C_d\sqrt\varepsilon.
\]
The triangle inequality now gives, for $1\le r\le d-1$,
\[
 \frac{\|H_r-K_r\|_F}{m}
 \le\frac{\|H_r-\mathsf A_r\|_F}{m}
    +\frac{\|\mathsf A_r-K_r\|_F}{m}
 \le C_d\sqrt\varepsilon.
\]
{Telescoping gives}
\[
 \prod_{r=1}^{d-1}H_r-\prod_{r=1}^{d-1}K_r
 =\sum_{r=1}^{d-1}
 \left(\prod_{t=1}^{r-1}K_t\right)(H_r-K_r)
 \left(\prod_{t=r+1}^{d-1}H_t\right).
\]
Lemma~\ref{lem:edge-replacement}, applied with
$D_r=H_r-K_r$, gives for every $r$ the exact identity
\[
 \left\|
 \left(\prod_{t=1}^{r-1}K_t\right)(H_r-K_r)
 \left(\prod_{t=r+1}^{d-1}H_t\right)
 \right\|_{L^2(J)}
 =\frac{\|H_r-K_r\|_F}{m}.
\]
Since $q<2$, $\|\cdot\|_{L^q(J)}\le\|\cdot\|_{L^2(J)}$, and therefore
\[
 \|P-Q\|_q
 =\rho m^{d/q}\|Z-\textstyle\prod_{r=1}^{d-1}K_r\|_{L^q(J)}
 \le\sum_{r=1}^{d-1}\frac{\|H_r-K_r\|_F}{m}
 \le C_d\sqrt\varepsilon,
\]
where
\[
 \rho m^{d/q}=m^{-(d+1)/2}m^{d/q}
 =m^{-(d+1)/2}m^{(d+1)/2}=1.
\]
Lemma~\ref{lem:unitary-chain} gives $\|Q\|_{\mathrm{cb}}=1$.
\end{proof}

\section{Proof of Theorem A}

\begin{proof}[Proof of Theorem~\ref{thm:classification}]
By homogeneity, assume \(\|T\|_{\mathrm{cb}}=1\). Equality in
\eqref{eq:cbBH-intro} gives \(\|T\|_q=1\), while
Lemma~\ref{lem:spectral-input} gives \(A_s\le1\) for every \(s\).
Lemma~\ref{sharp-lem-cube}, applied to
\(a_i=|T(e_{i_1},\ldots,e_{i_d})|\) with \(\varepsilon=0\), therefore
provides an integer \(1\le m\le n\), sets
\(J_1,\ldots,J_d\subseteq[n]\) with
\(|J_1|=\cdots=|J_d|=m\), and
\[
 J:=J_1\times\cdots\times J_d,
 \qquad \rho=m^{-(d+1)/2},
\]
such that \(T\) vanishes outside \(J\), while for
\(F=\rho^{-1}T|_J\) one has \(|F(i_1,\ldots,i_d)|=1\) for every \(i\in J\).  {Equation \eqref{sharp-eq-full-slices} gives the full-slice bounds.}

For this cube,
\[
 \|T-T\mathbf1_J\|_q=0,\qquad
 \mathbb E_J(|F|-1)^2=0,\qquad
 \mathbb E_J|F|^2=1,
\]
and \eqref{sharp-eq-full-slices} gives
$\max_{j\in J_s}h_{s,j}\le C_d/m$ for every $s$. Thus all hypotheses of
Lemma~\ref{lem:sqrt-spectral-polar-closure} hold with $\varepsilon=0$.

{Lemma~\ref{lem:sqrt-spectral-polar-closure}, with \(\varepsilon=0\), gives unimodular matrices \(H_r\), indexed by}
\(J_r\times J_{r+1}\), and unitary matrices \(U_r\) of order \(m\), for
\(r=1,\ldots,d-1\), such that
\[
 T(e_{i_1},\ldots,e_{i_d})
 =m^{-(d+1)/2}\prod_{r=1}^{d-1}H_r(i_r,i_{r+1})
 \qquad ((i_1,\ldots,i_d)\in J)
\]
and
\[
 \|H_r-\sqrt m\,U_r\|_F=0
 \qquad (r=1,\ldots,d-1).
\]
Hence \(H_r=\sqrt m\,U_r\), so
\[
 H_rH_r^*=mI_m.
\]
Since its entries are unimodular, each \(H_r\) is Hadamard.
Restoring the normalization and absorbing the scalar into \(\lambda\)
gives \eqref{eq:classification}.

Conversely, suppose that \(T\) has the form \eqref{eq:classification}.
Put \(U_r=m^{-1/2}G_r\).  Then each \(U_r\) is unitary, and the normalized
tensor
\[
 Q(e_{i_1},\ldots,e_{i_d})=\frac1m\prod_{r=1}^{d-1}U_r(i_r,i_{r+1})
     =m^{-(d+1)/2}\prod_{r=1}^{d-1}G_r(i_r,i_{r+1})
\]
has completely bounded norm one by Lemma~\ref{lem:unitary-chain}.
Its \(m^d\) nonzero coefficients all have modulus
\(m^{-(d+1)/2}\), hence \(\|Q\|_q=1\).  Homogeneity gives
\[
 \|T\|_q=\|T\|_{\mathrm{cb}}
 =|\lambda|m^{(d+1)/2}.
\]
Thus equality holds in \eqref{eq:cbBH-intro}.
\end{proof}

\begin{corollary}
Every nonzero extremizer has exactly $m^d$ nonzero coefficients for an integer $m$ for which a Hadamard matrix of order $m$ over $\mathbb K$ exists. The case $m=1$ consists precisely of coordinate monomials. Conversely, every tensor of the form \eqref{eq:classification} is an extremizer.
\end{corollary}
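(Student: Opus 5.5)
This corollary follows from Theorem~\ref{thm:classification} together with Lemma~\ref{lem:unitary-chain}; no new estimate is required.

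\emph{Support count and existence of Hadamard matrices.} Let $T$ be a nonzero extremizer. By Theorem~\ref{thm:classification}, $T$ has the form \eqref{eq:classification} with $\lambda\ne0$ and Hadamard matrices $G_r$ of order $m=|J_r|$. Every entry of each $G_r$ has modulus one, so each coefficient on $J$ has modulus $|\lambda|\ne0$. The coefficients vanish off $J$. Hence the support is exactly $J$, and it has $m^d$ elements. Since $d\ge2$, there is at least one edge $G_1$, and it is a Hadamard matrix of order $m$ over $\mathbb K$; over $\mathbb R$ its entries lie in $\{\pm1\}$. The support determines $J$, and therefore determines $m$, so $m$ is intrinsic to $T$.

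\emph{The case $m=1$.} If $m=1$, each $J_s=\{j_s\}$ and each $G_r$ is a $1\times1$ unimodular scalar. Then $T(e_{i_1},\ldots,e_{i_d})=\mu\,\mathbf 1_{\{i=(j_1,\ldots,j_d)\}}$ with $\mu\ne0$, i.e.\ $T(x^{(1)},\ldots,x^{(d)})=\mu\prod_s x^{(s)}_{j_s}$, which is a coordinate monomial. Conversely, a coordinate monomial $\mu\prod_sx^{(s)}_{j_s}$ with $\mu\ne0$ is of the form \eqref{eq:classification} with $m=1$, $\lambda=\mu$ and $G_r=(1)$, since the $1\times1$ matrix $(1)$ is Hadamard. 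One can also check this directly: $\|T\|_q=|\mu|$, and $\|T\|_{\mathrm{cb}}\le|\mu|$ because products of contractions are contractions, so equality holds by \eqref{eq:cbBH-intro}. Since $m$ is intrinsic, ``the case $m=1$'' is exactly the class of coordinate monomials.

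\emph{Converse.} The converse is the ``if'' direction of Theorem~\ref{thm:classification}. Set $U_r=m^{-1/2}G_r$; these are unitary. Lemma~\ref{lem:unitary-chain} gives $\|T\|_{\mathrm{cb}}=|\lambda|m^{(d+1)/2}$, and the flat modulus gives $\|T\|_q=|\lambda|m^{(d+1)/2}$, so equality holds. There is no real obstacle in this argument. The only point needing care is that $m$ is determined by the support, so the statements about $m$ are well defined.
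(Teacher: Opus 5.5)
Your proof is correct and follows the paper's argument. The support count and the $m=1$ case come from Theorem~\ref{thm:classification}, and the converse is its sufficiency direction via Lemma~\ref{lem:unitary-chain}; you are also slightly more explicit than the paper, since you check that the support is exactly $J$, that $d\ge2$ supplies a Hadamard edge of order $m$, and that coordinate monomials are indeed extremizers.
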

\begin{proof}
By Theorem~\ref{thm:classification}, the support is
$J_1\times\cdots\times J_d$ with $|J_s|=m$ for every $s$. Therefore
\[
 |\operatorname{supp}T|=\prod_{s=1}^d|J_s|=m^d.
\]
For $m=1$, the support is a single multi-index, so $T$ is a coordinate
monomial. Conversely, sufficiency in Theorem~\ref{thm:classification}
shows that every tensor of the form \eqref{eq:classification} attains
equality.
\end{proof}

\begin{corollary}
Fix $d\ge2$ and $n\ge2$. A real full-support extremizer on $[n]^d$ exists if and only if there is a real Hadamard matrix of order $n$.
\end{corollary}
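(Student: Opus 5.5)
The plan is to derive both directions from Theorem~\ref{thm:classification} and the preceding corollary. A \emph{full-support} extremizer on $[n]^d$ means a nonzero $T$ with $\|T\|_q=\|T\|_{\mathrm{cb}}$ whose coefficients $T(e_{i_1},\ldots,e_{i_d})$ are nonzero for every $(i_1,\ldots,i_d)\in[n]^d$.

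For necessity, let $T$ be such a real extremizer. By Theorem~\ref{thm:classification}, its support is exactly the box $J_1\times\cdots\times J_d$ with $|J_s|=m$. Outside this box the coefficients vanish, while inside they equal $\lambda\prod_r G_r(i_r,i_{r+1})$ with unimodular $G_r$, so they are nonzero. Full support therefore forces $J_1\times\cdots\times J_d=[n]^d$, that is, $J_s=[n]$ for every $s$ and $m=n$. Since $d\ge2$, there is at least one edge matrix $G_1$. It is a real Hadamard matrix indexed by $[n]\times[n]$, so a real Hadamard matrix of order $n$ exists. Here we use the field-specific content of the theorem: over $\mathbb R$ the edges are real with entries in $\{\pm1\}$.

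For sufficiency, let $H$ be a real Hadamard matrix of order $n$. Take $J_s=[n]$ for all $s$, $G_r=H$ for $r=1,\ldots,d-1$, and $\lambda=1$. The tensor defined by \eqref{eq:classification} is then real, and each coefficient $\prod_r H(i_r,i_{r+1})$ has modulus one. The form therefore has full support on $[n]^d$, and by the converse direction of Theorem~\ref{thm:classification} (equivalently, the last sentence of the preceding corollary) it is an extremizer.

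There is no real obstacle. The only point needing care is that Theorem~\ref{thm:classification} is applied over $\mathbb K=\mathbb R$: both the completely bounded norm and the Hadamard edges must be the real ones, so the Hadamard matrix produced in the necessity direction is a genuine real Hadamard matrix.
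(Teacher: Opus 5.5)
Your proposal is correct and follows essentially the same route as the paper: for necessity, Theorem~\ref{thm:classification} over $\mathbb R$ forces the support box to be $[n]^d$, so $m=n$ and the edges are real Hadamard matrices of order $n$; for sufficiency, you set $G_r=H$ for all $r$ and invoke the converse part of the theorem. Your extra remarks, that coefficients inside the box are nonzero because $\lambda\ne0$ and the edges are unimodular, and that $d\ge2$ guarantees at least one edge, make explicit two small points the paper leaves implicit.
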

\begin{proof}
By Theorem~\ref{thm:classification}, a real extremizer has support
$J_1\times\cdots\times J_d$, where $J_s\subseteq[n]$ and $|J_s|=m$.
Full support forces $J_s=[n]$ and $m=n$, so the theorem supplies
real Hadamard matrices of order $n$.

Conversely, let $G$ be a real Hadamard matrix of order $n$, take
$J_s=[n]$ for all $s$, and set $G_r=G$ for $1\le r\le d-1$ in
\eqref{eq:classification}. The sufficiency part of
Theorem~\ref{thm:classification} then gives a real full-support
extremizer.
\end{proof}

\begin{proposition}[Gauge invariance of the representation]
Assume $d\ge3$. Let a tensor be represented as in \eqref{eq:classification}. Fix $1\le r\le d-2$ and let $D$ be a diagonal unitary matrix indexed by $J_{r+1}$. Replacing
\[
 G_r\quad\text{by}\quad G_rD,
 \qquad
 G_{r+1}\quad\text{by}\quad D^*G_{r+1}
\]
does not change any coefficient of the tensor. Simultaneously permuting the columns of $G_r$ and the rows of $G_{r+1}$ corresponds to relabelling the coordinate set $J_{r+1}$. A unimodular scalar may likewise be transferred between $\lambda$ and any one edge.
\end{proposition}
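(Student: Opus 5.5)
The plan is a direct coefficient computation for each of the three assertions, using only the scalar path structure of \eqref{eq:classification}. Fix $(i_1,\ldots,i_d)\in J_1\times\cdots\times J_d$ and write $D=\operatorname{diag}(\delta(j))_{j\in J_{r+1}}$ with $|\delta(j)|=1$.

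\emph{Diagonal gauge.} The modified edges have entries
\[
(G_rD)(i_r,i_{r+1})=G_r(i_r,i_{r+1})\,\delta(i_{r+1}),
\qquad
(D^*G_{r+1})(i_{r+1},i_{r+2})=\overline{\delta(i_{r+1})}\,G_{r+1}(i_{r+1},i_{r+2}).
\]
In the product \eqref{eq:scalar-path-product-intro}, these two factors are the only ones that change. Their contribution becomes $|\delta(i_{r+1})|^2\,G_r(i_r,i_{r+1})G_{r+1}(i_{r+1},i_{r+2})$, and since $|\delta(i_{r+1})|^2=1$ every coefficient on $J$ is unchanged. The coefficients outside $J$ are zero and stay zero.

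The condition $d\ge3$ and $r\le d-2$ guarantees that the edge $G_{r+1}$ actually exists. We also need the new edges to remain Hadamard. Right or left multiplication by a diagonal unitary preserves unimodularity of entries, and
\[
(G_rD)(G_rD)^*=G_rDD^*G_r^*=mI,
\qquad
(D^*G_{r+1})(D^*G_{r+1})^*=D^*(mI)D=mI.
\]
So the representation keeps the form \eqref{eq:classification}.

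\emph{Permutations.} Let $\pi$ be a bijection of $J_{r+1}$, and replace $G_r(a,b)$ by $G_r(a,\pi(b))$ and $G_{r+1}(b,c)$ by $G_{r+1}(\pi(b),c)$. The new coefficient at $(\ldots,i_{r+1},\ldots)$ then equals the old coefficient at $(\ldots,\pi(i_{r+1}),\ldots)$. This is exactly the tensor after relabelling the coordinate set $J_{r+1}$ through $\pi$. Permuting the columns or rows of a Hadamard matrix preserves the Hadamard property, because $PP^*=I$ for a permutation matrix $P$.

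\emph{Scalar transfer.} Let $|\omega|=1$ and replace $\lambda$ by $\lambda\overline\omega$ and $G_k$ by $\omega G_k$ for some single edge $k$. Each coefficient picks up the factor $\overline\omega\,\omega=1$, so nothing changes. The matrix $\omega G_k$ is still Hadamard, and $\lambda\overline\omega\ne0$.

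There is no real obstacle here. The only point needing care is to verify that the Hadamard property is preserved under each operation, so that the new data is again a representation of the form \eqref{eq:classification}. Over $\mathbb R$, the diagonal unitaries and unimodular scalars are simply signs $\pm1$.
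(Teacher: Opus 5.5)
Your proposal is correct and matches the paper's argument: an entrywise computation in which the diagonal phases cancel as $\delta(i_{r+1})\overline{\delta(i_{r+1})}=1$, the permutation acts as a common relabelling of the shared index $i_{r+1}$, and a unimodular scalar moved between $\lambda$ and one edge cancels. Your additional check that the modified edges remain Hadamard, so that the new data is again of the form \eqref{eq:classification}, is a small step the paper leaves implicit.
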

\begin{proof}
Write $D=\operatorname{diag}(\omega_j)_{j\in J_{r+1}}$, with $|\omega_j|=1$. For every active multi-index,
\begin{align*}
 (G_rD)(i_r,i_{r+1})(D^*G_{r+1})(i_{r+1},i_{r+2})
 &=G_r(i_r,i_{r+1})\omega_{i_{r+1}}
   \overline{\omega_{i_{r+1}}}G_{r+1}(i_{r+1},i_{r+2})\\
 &=G_r(i_r,i_{r+1})G_{r+1}(i_{r+1},i_{r+2}).
\end{align*}
The remaining factors are unchanged. A common permutation of the
shared index $i_{r+1}$ gives the second assertion. Finally, multiplying
one edge by a unimodular scalar and $\lambda$ by its inverse leaves
the tensor unchanged.
\end{proof}

\section{Proof of Theorem B}
\begin{proof}[Proof of Theorem~\ref{thm:flat-stability}]
Set $a_i=|T(e_{i_1},\ldots,e_{i_d})|$ for $i\in[n]^d$.
For each slot $s$ and $j\in[n]$, the marginal in
Lemma~\ref{sharp-lem-cube} is
\[
 R_s(j)=\left(\sum_{\substack{i\in[n]^d\\ i_s=j}}|T(e_{i_1},\ldots,e_{i_d})|^2\right)^{1/2}
       =\|M_{s,j}\|_F=h_{s,j}.
\]
{Thus}
\[
 A_s=\sum_{j=1}^n h_{s,j}.
\]
Lemma~\ref{lem:spectral-input} gives $A_s\le\|T\|_{\rm cb}=1$ for
every $s\in\{1,\ldots,d\}$.  Moreover
$\|a\|_{\ell_q}=\|T\|_q\ge1-\varepsilon$.

{Lemma~\ref{sharp-lem-cube} supplies sets $J_1,\ldots,J_d$ of common cardinality. Put}
\[
 J=J_1\times\cdots\times J_d,\qquad |J_s|=m,
 \qquad \rho=m^{-(d+1)/2}.
\]
Because $q=2d/(d+1)$, one has
\[
 (m^d)^{-1/q}=m^{-d/q}=m^{-(d+1)/2}=\rho,
\]
{and \eqref{sharp-eq-tail} gives}
\[
 \|T-T\mathbf1_J\|_q^q
   ={\sum_{(i_1,\ldots,i_d)\in[n]^d\setminus J}}|T(e_{i_1},\ldots,e_{i_d})|^q\le C_d\varepsilon,
\]
and therefore
\[
 \|T-T\mathbf1_J\|_q\le C_d\varepsilon^{1/q}
\]
after changing $C_d$.  If $F=\rho^{-1}T|_J$, then
{$|F(i_1,\ldots,i_d)|=|T(e_{i_1},\ldots,e_{i_d})|/\rho$. By \eqref{sharp-eq-variance} and \eqref{sharp-eq-moments},}
\[
 \mathbb E_J(|F|-1)^2\le C_d\varepsilon,
 \qquad
 |\mathbb E_J|F|^2-1|\le C_d\varepsilon.
\]
{Equation \eqref{sharp-eq-full-slices} gives}
\[
 \max_{j\in J_s}h_{s,j}\le \frac{C_d}{m}
 \qquad(s=1,\ldots,d).
\]
{Lemma~\ref{lem:sqrt-spectral-polar-closure} now gives unimodular matrices $H_r$ and a tensor $P$ satisfying}
\[
 \|T-P\|_q\le C_d\sqrt\varepsilon,
 \qquad \|P\|_q=1,
\]
{and \eqref{eq:main-tail} follows from \eqref{sharp-eq-tail}.}
\end{proof}

\section{Proof of Theorem C}
\begin{proof}[Proof of Theorem~\ref{thm:unitary-stability}]
{Use the sets $J_s$ obtained in the proof of Theorem~\ref{thm:flat-stability}. For these sets, Lemma~\ref{lem:sqrt-spectral-polar-closure} provides the unimodular matrices $H_r$ and the scaled unitaries $K_r=\sqrt m\,U_r$ simultaneously, with
\[
 \max_{1\le r\le d-1}\frac{\|H_r-\sqrt m\,U_r\|_F}{m}
 \le C_d\sqrt\varepsilon.
\]
The tensor $Q$ from that lemma has coefficients
\[
 Q(e_{i_1},\ldots,e_{i_d})
 =\rho\prod_{r=1}^{d-1}K_r(i_r,i_{r+1})
 =\frac1m\prod_{r=1}^{d-1}U_r(i_r,i_{r+1})
 \qquad(i\in J),
\]
where the second identity uses
$\rho m^{(d-1)/2}=m^{-1}$. It vanishes outside $J$ and satisfies
\[
 \|T-Q\|_q\le C_d\sqrt\varepsilon,\qquad \|Q\|_{\mathrm{cb}}=1.
\]
Lemma~\ref{lem:unitary-chain} gives $\|Q\|_F=m^{-1/2}$.
Over $\mathbb R$, phase choices give signs and polar factors can be
chosen orthogonal.}
\end{proof}

\section{Sharpness of Theorems B and C}
\label{sharp-sec-optimality}

\begin{proposition}
\label{sharp-prop-optimality}
{Fix $d\ge2$ and put $q=2d/(d+1)$. There are $d$-linear forms
$T_t:(\mathbb R^2)^d\to\mathbb R$, $0<t<1/2$, such that}
\[
 \|T_t\|_{\mathrm{cb}}=1,\qquad
1-\|T_t\|_q=\frac{t^2}{d+1}+O_d(t^4),
\]
whose coefficient $\ell_q$ distance from every normalized unitary chain
and every normalized flat unimodular path supported on an equal-sided
Cartesian box in $[2]^d$ is at least $c_dt$, where $c_d>0$ depends only on $d$, for sufficiently small $t$.
The same lower bound holds when the approximants are allowed to be complex,
after viewing $T_t$ as a complex multilinear form.
Consequently no uniform estimate for either structural approximation
can have deficit exponent greater than $1/2$.
\end{proposition}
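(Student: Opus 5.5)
We plan to build $T_t$ as a deliberately unbalanced chain. It keeps completely bounded norm one, but its first-slot slice norms are unequal. The factorization of Lemma~\ref{lem:unitary-chain} still applies when the row weights are not flat. Fix $\alpha=(\alpha_1,\alpha_2)=(\cos(\pi/4+t),\sin(\pi/4+t))$, so $\alpha_1^2+\alpha_2^2=1$. Let $H=\begin{pmatrix}1&1\\1&-1\end{pmatrix}$ and $U=H/\sqrt2$, and define on $[2]^d$
\[
 T_t(e_{i_1},\ldots,e_{i_d})=\alpha_{i_1}\,2^{-(d-1)/2}\prod_{r=1}^{d-1}U(i_r,i_{r+1}) .
\]
At $t=0$ this is exactly the normalized Hadamard chain with $m=2$. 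All coefficients are real, so $T_t$ is a real form.

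\emph{Step 1: $\|T_t\|_{\mathrm{cb}}=1$.} For the upper bound, replace the left block $L=2^{-1/2}[X_1^1\ X_2^1]$ of Lemma~\ref{lem:unitary-chain} by $L_\alpha=[\alpha_1X_1^1\ \alpha_2X_2^1]$, and keep $D_s$, $U\otimes I$ and $R$ unchanged. The Cauchy--Schwarz argument used there gives $\|L_\alpha\|\le(\alpha_1^2+\alpha_2^2)^{1/2}=1$. Expanding the block product reproduces the coefficients of $T_t$; the factor $2^{-(d-1)/2}$ is absorbed by the normalization of $R$ together with the unitarity of the middle blocks, and we will check this bookkeeping against Lemma~\ref{lem:unitary-chain}. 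For the lower bound, compute the last-slot slice norms. Because every $|U(a,b)|^2=1/2$, summing over $i_1,\ldots,i_{d-1}$ with weight $\alpha_{i_1}^2$ gives $h_{d,j}$ independent of $j$, with $A_d=1$. Lemma~\ref{lem:spectral-input} then gives $\|T_t\|_{\mathrm{cb}}\ge A_d=1$.

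\emph{Step 2: the deficit.} All $2^{d-1}$ coefficients with a fixed $i_1$ have modulus $\alpha_{i_1}2^{-(d-1)/2}2^{-(d-1)/2}$. Hence $\|T_t\|_q^q=2^{d-1}\,2^{-(d-1)q}(\alpha_1^q+\alpha_2^q)$. Write $\alpha_{1,2}=2^{-1/2}(\cos t\mp\sin t)$ and expand $(\cos t\mp\sin t)^q$ to second order in $t$. The first-order terms cancel by symmetry. Using $q=2d/(d+1)$, the second-order term should produce $1-\|T_t\|_q=t^2/(d+1)+O_d(t^4)$; the $t^3$ term vanishes because the sum is even in $t$. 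This is a routine Taylor computation that we will carry out to confirm the constant.

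\emph{Step 3: the distance lower bound.} This is the step that needs an actual argument, but the slice-norm invariant handles it. Let $Q$ be an approximant, either a normalized unitary chain or a normalized flat unimodular path, supported on an equal-sided box $J\subseteq[2]^d$, possibly with complex entries.
\begin{itemize}
\item If $m=1$, then $T_t$ has at least $2^d-1$ coefficients off $J$, each of modulus $\ge c$ for small $t$, so the distance is bounded below by a constant.
\item If $m=2$, then $J=[2]^d$. Both kinds of approximant have first-slot slice Frobenius norms equal to $1/2$: for unitary chains this is Lemma~\ref{lem:unitary-chain}, and for flat paths it follows from the constant modulus $2^{-(d+1)/2}$ on $2^{d-1}$ entries.
\item The first-slot slices of $T_t$ have Frobenius norms $\alpha_{i_1}/\sqrt2$, and $\alpha_1/\sqrt2-\alpha_2/\sqrt2=-\sin t$.
\end{itemize}
So in the case $m=2$, for some $i_1$ we have $|\alpha_{i_1}/\sqrt2-1/2|\ge(\sin t)/2$. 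By the reverse triangle inequality in $\ell_2$ of that slice, together with $\|\cdot\|_{\ell_2}\le\|\cdot\|_{\ell_q}$ for $q<2$, this gives $\|T_t-Q\|_q\ge c_dt$. The argument only uses moduli, so it applies verbatim to complex approximants.

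Combining Steps 2 and 3, the distance is at least $c_d t\asymp c_d'\sqrt{\varepsilon}$ where $\varepsilon=1-\|T_t\|_q$. Therefore no bound of the form $C_d\varepsilon^{\gamma}$ with $\gamma>1/2$ can hold. The main risk is the normalization bookkeeping in Step 1: confirming that the stated form is exactly the output of the weighted factorization. If it is not, we will instead define $T_t$ directly as that block product, and recompute Step 2 for it.
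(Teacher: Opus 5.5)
Your proof is essentially the paper's, up to reparametrization. The paper uses first-slot weights $(1\pm t)/\sqrt{2(1+t^2)}$, which is your $\alpha$ evaluated at $\arctan t$ with the two indices swapped. It then follows the same three steps: the weighted-row factorization for $\|T_t\|_{\mathrm{cb}}\le1$, the bound $A_d(T_t)\le\|T_t\|_{\mathrm{cb}}$ for the reverse inequality, and the first-slot slice-norm obstruction, with a separate trivial bound for side $m=1$.

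The normalization you flagged is wrong as written, though. The block product $L_\alpha(U\otimes I)D_2(U\otimes I)\cdots D_{d-1}(U\otimes I)R$ has coefficients
$\alpha_{i_1}2^{-1/2}\prod_{r}U(i_r,i_{r+1})=\alpha_{i_1}2^{-d/2}\prod_{r}H(i_r,i_{r+1})$,
so the prefactor must be $2^{-1/2}$, not $2^{-(d-1)/2}$. For $d\ge3$ your displayed tensor is $2^{-(d-2)/2}$ times the correct one. It therefore has $\|T_t\|_{\mathrm{cb}}=A_d=2^{-(d-2)/2}\neq1$, and $1-\|T_t\|_q$ does not tend to zero; it is also not the normalized Hadamard chain at $t=0$.

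Your fallback of defining $T_t$ as the block product repairs this. Your values $A_d=1$ and first-slot slice norms $\alpha_{i_1}/\sqrt2$ are already the correct ones for that tensor. Only Step 2 must be redone: the coefficient modulus is $\alpha_{i_1}2^{-d/2}$, which gives
$\|T_t\|_q^q=\tfrac12\bigl((1-\sin 2t)^{q/2}+(1+\sin 2t)^{q/2}\bigr)$.
This yields $1-\|T_t\|_q=\tfrac{2-q}{2}t^2+O_d(t^4)=\tfrac{t^2}{d+1}+O_d(t^4)$, as required.
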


\begin{proof}
Let
\[
 F_2=\begin{pmatrix}1&1\\1&-1\end{pmatrix},\qquad
 \sigma_1=1,\quad\sigma_2=-1,
\]
and define on $\{1,2\}^d$
\begin{equation}
 T_t(e_{i_1},\ldots,e_{i_d})
 =2^{-(d+1)/2}\frac{1+\sigma_{i_1}t}{\sqrt{1+t^2}}
       \prod_{r=1}^{d-1}F_2(i_r,i_{r+1}).
 \label{sharp-eq-all-degree-example}
\end{equation}
For arbitrary contractions
$X^s_1,X^s_2$ on a common Hilbert space, put
\[
 L_t=\frac{[(1+t)X^1_1\ \ (1-t)X^1_2]}{\sqrt{2+2t^2}},
 \qquad
 R=\frac{[X^d_1\ \ X^d_2]^{\mathsf T}}{\sqrt2},
 \qquad
 D_s=\operatorname{diag}(X^s_1,X^s_2)\quad(2\le s\le d-1).
\]
The row $L_t$, column $R$, and block diagonal matrices $D_s$ are
contractions, and $V=F_2/\sqrt2$ acts unitarily on the direct sum.
The cb test for \eqref{sharp-eq-all-degree-example} is
\[
 L_t V D_2 V\cdots D_{d-1} V R,
\]
with the internal diagonal factors omitted when $d=2$.  Hence
$\|T_t\|_{\mathrm{cb}}\le1$.  On the other hand, each of the two last-slot slices has Frobenius norm $1/2$: directly,
\[
 {\sum_{i_1,\ldots,i_{d-1}=1}^2}
 |T_t(e_{i_1},\ldots,e_{i_{d-1}},e_j)|^2
 =\frac{2^{d-2}2^{-(d+1)}}{1+t^2}
       \bigl((1+t)^2+(1-t)^2\bigr)
 =\frac14.
\]
The mixed slice bound $A_d(T_t)\le\|T_t\|_{\mathrm{cb}}$ gives the
reverse inequality. Thus $\|T_t\|_{\mathrm{cb}}=1$.

Its coefficient norm is
\[
 \|T_t\|_q
 =\frac{\left(\dfrac{(1+t)^q+(1-t)^q}{2}\right)^{1/q}}
        {\sqrt{1+t^2}}
 =1-\frac{2-q}{2}t^2+O_d(t^4)
 =1-\frac{t^2}{d+1}+O_d(t^4).
\]
The first-slot slice Frobenius norms of $T_t$ are
\[
 \frac{1+t}{2\sqrt{1+t^2}},\qquad
 \frac{1-t}{2\sqrt{1+t^2}}.
\]
For every normalized unitary chain supported on the full box $[2]^d$
\[
 Q(e_{i_1},\ldots,e_{i_d})=\frac12\prod_{r=1}^{d-1}U_r(i_r,i_{r+1}),
\]
and every fixed $i_1\in\{1,2\}$,
\begin{align*}
 \sum_{i_2,\ldots,i_d=1}^2|Q(e_{i_1},\ldots,e_{i_d})|^2
 &=\frac14\sum_{i_2=1}^2|U_1(i_1,i_2)|^2
   \sum_{i_3=1}^2|U_2(i_2,i_3)|^2\cdots
   \sum_{i_d=1}^2|U_{d-1}(i_{d-1},i_d)|^2\\
 &=\frac14.
\end{align*}
Thus each first-slot slice has Frobenius norm $1/2$. The reverse
triangle inequality within the two slices therefore gives
\[
 \|T_t-Q\|_F^2\ge
 \left(\frac{1+t}{2\sqrt{1+t^2}}-\frac12\right)^2+
 \left(\frac{1-t}{2\sqrt{1+t^2}}-\frac12\right)^2=1-\frac1{\sqrt{1+t^2}}\ge c_dt^2.
\]
Since $q<2$, this also bounds the coefficient $\ell_q$ distance from
below. Every normalized flat unimodular path on $[2]^d$ also has first-slot slice Frobenius norms $1/2$, so the same argument applies to both comparison classes.

Since every coordinate set is $[2]$, the only smaller equal-sided Cartesian boxes have side $m=1$ and hence are singletons. Their normalized approximants have Frobenius norm one,
whereas $\|T_t\|_F=1/\sqrt2$, so their distance from $T_t$ is bounded
away from zero.
\end{proof}

{
\section*{Index of notation}
\addcontentsline{toc}{section}{Index of notation}
\begingroup
\small
\renewcommand{\arraystretch}{1.22}
\noindent\begin{tabularx}{\textwidth}{@{}>{\raggedright\arraybackslash}p{29mm}>{\raggedright\arraybackslash}X>{\raggedright\arraybackslash}p{28mm}@{}}
\toprule
Symbol & Meaning & Defined in\\
\midrule
$\|T\|$ & Classical multilinear supremum norm on $(\ell_\infty^n)^d$ & Introduction\\
$\|T\|_{\mathrm{cb}}$ & Supremum over products of contractions in the fixed slot order & Definition~\ref{def:cb-norm-intro}\\
$q=q_d$, $\|T\|_q$ & $q_d=2d/(d+1)$ and the coefficient $\ell_q$ norm & \eqref{eq:coefficient-q-intro}\\
$\ell_p(E)$, $L^p(E)$ & Counting norm and uniform probability norm & \eqref{eq:counting-probability-norms}\\
$T|_J$, $T\mathbf1_J$ & Restriction to $J$ and its extension by zero & Section~\ref{sec:spectral-input}\\
$M_{s,j}$, $M^J_{s,j}$ & Full counting-measure slice and its restriction to $J$ & \eqref{eq:slice-matrix-definition}\\
$\mathsf M_{s,j}(F)$ & Conditional kernel acting between probability $L^2$ spaces & \eqref{eq:probability-kernel-convention}\\
$\operatorname{tail}_2(M)$ & Frobenius distance to matrices of rank at most one & \eqref{eq:rank-one-tail-definition}\\
$\|M\|_{S_1}$ & Trace norm, the sum of the singular values & \eqref{eq:trace-norm-definition}\\
$h_{s,j}$, $A_s$, $\Gamma_s$ & Full-slice Frobenius norm, mixed-norm sum, and spectral sum & \eqref{eq:slice-quantities}\\
$m$, $N$, $\rho$ & Common side length, $N=m^d$, and $\rho=m^{-(d+1)/2}$ & Section~\ref{subsec:flat-scale-bookkeeping}\\
$\Delta_s(F)$ & Root-mean-square conditional rank-one error & \eqref{eq:conditional-rank-defect}\\
$\beta(A)$ & Bilinear supremum over two families of unit-ball vectors & \eqref{eq:beta-definition}\\
$\mathsf A_r$ & Averaged matrix obtained after cancelling all other path edges & \eqref{eq:edge-average-definition}\\
$H_r$, $U_r$, $K_r$ & Unimodular edges, unitaries, and $K_r=\sqrt m\,U_r$ & Lemma~\ref{lem:sqrt-spectral-polar-closure}\\
$\mathbb T$, $\operatorname{Diag}(\xi)$ & Unit circle and the diagonal matrix with diagonal $\xi$ & Remark~\ref{rem:classical-norm-complex-hadamard-chains}\\
\bottomrule
\end{tabularx}\par
\endgroup
}

\section*{Author contributions}
All authors contributed to the conception of the problem, the mathematical development, the verification of the arguments, and the preparation and revision of the manuscript. All authors read and approved the final version.

\section*{Funding}
D. N\'u\~nez-Alarc\'on and D. M. Pellegrino were partially supported by CNPq Grants 406457/2023-9, 403964/2024-5. In addition, D. Pellegrino was partially supported by CNPq Grant 305807/2025-0. E. V. Teixeira acknowledges support from the Grayce B. Kerr Chair at Oklahoma State University and partial support from the DARPA ExpMath program under Agreement No. HR0011262E029.

\section*{Competing interests}
The authors declare that they have no competing interests.

\section*{Data availability}
Data sharing is not applicable to this article, as no datasets were generated or analyzed during the current study.

\begin{samepage}
\section*{Declaration on the use of generative AI}
During the preparation of this manuscript, the authors used a generative-AI tool for exploratory calculations, consistency checks, organization of arguments, and drafting and editorial assistance. All mathematical statements, proofs, references, and final formulations were reviewed and verified by the authors, who take full responsibility for the content of the manuscript.

\end{samepage}

\end{document}